\theoremstyle{plain}
 \newtheorem{theorem}{Theorem}[section]
 \newtheorem*{theorem*}{Theorem}
 \newtheorem{proposition}[theorem]{Proposition}
 \newtheorem{lemma}[theorem]{Lemma}
 \newtheorem{problem}[theorem]{Problem}
\theoremstyle{definition}
 \newtheorem{definition}[theorem]{Definition}
\theoremstyle{remark}
 \newtheorem{example}[theorem]{Example}
\numberwithin{equation}{section}
\begin{document}
\title{$Q$-polynomial coherent configurations}

\author{
Sho Suda \\ Department of Mathematics, \\ National Defense Academy of Japan \\ ssuda@nda.ac.jp
}
\date{\today}

\maketitle
\renewcommand{\thefootnote}{\fnsymbol{footnote}}
\footnote[0]{2010 Mathematics Subject Classification: 05E30\\
Keywords: Association scheme, Coherent configuration, $Q$-polynomial, Design, Code}
\begin{abstract} 
Coherent configurations are a generalization of association schemes. 
In this paper, we introduce the concept of $Q$-polynomial coherent configurations and study the relationship among intersection numbers, Krein numbers, and eigenmatrices. 
The examples of $Q$-polynomial coherent configurations are provided from Delsarte designs in $Q$-polynomial schemes and spherical designs.  
\end{abstract}

%%%%%%%%%%%%%%%%%%%%%%%%%%%%%%%%%%%%%%%%%%%%%%%%%%%%%%%%%%%%%%%%%%%%%%%%

%%%%%%%%%%%%%%%%%%%%%%%%%%%%%%%%%%%%%%%%%%%%%%%%%%%%%%%%%%%%%%%%%%%%%%%%%%%%%%%%%%%%%%%%%%%%%%%%%%%%%%%%%%%%%%%%%%%%%%%%%%%%%%%%%%%%%%%%%%%%%%%%%%%%%%%%%%%%%%%%%%%%%%%%%%%%%%%%%%%%%%%%%%%%%%%%%%%%%%%%%%%%%%%%%%%%%%
%%%%%%%%%%%%%%%%%%%%%%%%%%%%%%%%%%%%%%%%%%%%%%%%%%%%%%%%%%%%%%%%%%%%%%%%%%%%%%%%%%%%%%%%%%%%%%%%%%%%%%%%%%%%%%%%%%%%%%%%%%%%%%%%%%%%%%%%%%%%%%%%%%%%%%%%%%%%%%%%%%%%%%%%

\section{Introduction}
Association schemes are a combinatorial generalization of a transitive permutation group.  
$Q$-polynomial association schemes are defined by Delsarte in \cite{D} as a framework to study design theory including orthogonal arrays and block designs, and
have been extensively studied in the last two decades.  

This concept is regarded as a dual object to distance-regular graphs (equivalently $P$-polynomial association schemes).
Many examples of $Q$-polynomial association schemes that are neither $P$-polynomial nor duals of translation $P$-polynomial association schemes are obtained from designs in $Q$-polynomial schemes or spherical designs \cite{BB,DGS}. 

Coherent configurations are a combinatorial generalization of a permutation group. 
In the last decades, several examples of coherent configurations are obtained from design theoretic objects such as block designs, spherical designs and Euclidean designs. 

In this paper, the $Q$-polynomial property for coherent configurations whose fibers are symmetric association schemes is proposed.  
The $Q$-polynomial property is characterized in a similar fashion to association schemes.  
Examples will be given from Delsarte designs or spherical designs. 
It was shown in \cite{D,DGS} that a Delsarte or spherical $t$-design with degree $s$ satisfying $t\geq2s-2$ has a structure of a $Q$-polynomial association scheme. 
In \cite{S}, this result for spherical designs is generalized as follows: 
Let $X_i$ be a spherical $t_i$-design for $i\in\{1,2,\ldots,n\}$ and $s_{i,j}$ be the number of distinct inner products between $X_i$ and $X_j$. If $t_j\geq s_{i,j}+s_{j,h}-2$ holds for any $i,j,h\in\{1,2,\ldots,n\}$, then $\bigcup_{i=1}^n X_i$ with binary relations defined by  inner products has a structure of a coherent configuration.  
We show that the coherent configurations obtained in this manner is $Q$-polynomial.     
As a corolalry, $Q$-polynomial coherent configurations are obtained from 
\begin{itemize}
\item tight Delsarte or spherical designs with small strength such as $4,5,7$,  
\item $Q$-antipodal  $Q$-polynomial association schemes.    
\end{itemize}

This paper is organized as follows. 
In Section~\ref{sec:as} and Section~\ref{sec:cc}, we review the theory of association schemes and coherent configurations.  
In Section~\ref{sec:qcc}, we prove Proposition~\ref{prop:Q-poly} that characterizes the $Q$-polynomial property. 
In Section~\ref{sec:example}, several examples of $Q$-polynomial coherent configurations are provided from $Q$-antipodal $Q$-polynomial association schemes, complete orthogonal array of strength $4$, tight spherical $t$-designs for $t=4,5,7$, and maximal mutually unbiased bases. 
Section~\ref{sec:hn2} is taken from \cite{V} and \cite{BBTY2020}. It is known that the Terwilliger algebra of the binary Hamming schemes is a coherent configuration. We will claim that the coherent configuration is $Q$-polynomial based on \cite{V}. Furthermore it was shown in \cite{BBTY2020} that tight relative $2e$-designs on two shells in the binary Hamming scheme $H(n,2)$ yield a $Q$-polynomial coherent configurations. Motivated by this work, we generalize Theorem~\ref{thm:Q-polyccDel} to designs in fibers of a  $Q$-polynomial coherent configuration. 
Finally we list open problems in Section~\ref{sec:open} regarding $Q$-polynomial coherent configurations. In Appendix~\ref{sec:a}, the formula among intersection numbers, Krein numbers, eigenmatrices is given in a similar manner \cite{BI}.

\section{Association schemes}\label{sec:as}

We begin with the definition of association schemes.
We refer the reader to \cite{BI} for more information. 
Let $X$ be a finite set and $\mathcal{R}=\{R_0,R_1,\ldots,R_d\}$ be a set of non-empty subsets of $X\times X$.

The pair $\mathfrak{X}=(X,\mathcal{R})$ is a commutative association scheme with class $d$ if the following hold:
\begin{enumerate}
\item $R_0=\text{diag}(X\times X)$, where $\mathrm{diag}(X\times X)=\{(x,x)\mid x\in X\}$, 
\item $\{R_0,R_1,\ldots,R_d\}$ is a partition of $X\times X$,  
\item for any $i \in\{1,\ldots,d\}$, $R_i^\top\in \mathcal{R}$ for $1\leq i\leq d$, where $R^\top=\{(y,x)\mid (x,y)\in R\}$ for a subset $R$ of $X\times X$, 
\item for any $i,j,h \in\{0,1,\ldots,d\}$, there exists an integer $p_{i,j}^h$, called an intersection number,  such that 
$$
|\{z\in X \mid (x,y)\in R_i,(y,z)\in R_j\}|=p_{i,j}^h
$$
for any $(x,y)\in R_h$, 
\item for any $i,j,h \in\{0,1,\ldots,d\}$, $p_{i,j}^h=p_{j,i}^h$ holds.  
\end{enumerate}

A commutative association scheme is said to be symmetric if the following holds: 
\begin{enumerate}
\item[(3)'] for any $i\in\{1,2,\ldots,d\}$, $R_i^\top=R_i$ holds. 
\end{enumerate}

From now, let $\mathfrak{X}=(X,\mathcal{R})$ be a symmetric association scheme. 
Let $A_i$ be the adjacency matrix of the graph $(X,R_i)$. 
Here the adjacency matrix of a graph $(X,R)$ is the $(0,1)$-matrix with rows and columns indexed by the elements of $X$ and its $(x,y)$-entry equal to $1$  if $(x,y)\in R$ and $0$ otherwise. 
The vector space $\mathcal{A}$ spanned by the $A_i$ over $\mathbb{R}$ forms an algebra which is called the adjacency algebra of $(X,\mathcal{R})$.
Since $\mathcal{A}$ is commutative and semisimple, there exist primitive idempotents $E_0=\frac{1}{|X|}J,E_1,\ldots,E_d$, where $J$ is the all-ones matrix. 
Since the adjacency algebra $\mathcal{A}$ is closed under the ordinary multiplication and entry-wise multiplication denoted by $\circ$, 
reformulate the intersection numbers $p_{i,j}^h$ and define the Krein numbers $q_{i,j}^h$ for $0\leq i,j,h\leq d$ as follows;
\begin{align*}
A_iA_j=\sum_{h=0}^d p_{i,j}^h A_h,\quad E_i\circ E_j=\frac{1}{|X|}\sum_{h=0}^d q_{i,j}^h E_h.
\end{align*}
We define Krein matrices $\hat{B}_i=(q_{\ell,j}^h)_{j,h=0}^{d}$ for $i\in\{0,1,\ldots,d\}$.

Since $\{A_0,A_1,\ldots,A_d\}$ and $\{E_0,E_1,\ldots,E_d\}$ form bases of $\mathcal{A}$, there exist change of bases matrices $P=(p_h(\ell))_{0\leq \ell,j\leq d}$ and $Q=(q_h(\ell))_{0\leq \ell,j\leq d}$ defined by 
\begin{align*}
(A_0,A_1,\ldots,A_d)&=(E_0,E_1,\ldots,E_d)P, \\
(E_0,E_1,\ldots,E_d)&=\frac{1}{|X|}(A_0,A_1,\ldots,A_d)Q, 
\end{align*}
equivalently, 
\begin{align*}
A_h=\sum_{\ell=0}^d p_h(\ell)E_\ell,\quad E_h=\frac{1}{|X|}\sum_{\ell=0}^d q_h(\ell)A_\ell.
\end{align*}
The matrices $P$ and $Q$ are called the first and second eigenmatrices of $(X,\mathcal{R})$ respectively.

\begin{proposition}\label{prop:Q-polyas}
Let $\mathfrak{X}=(X,\mathcal{R})$ be a symmetric association scheme.
The following conditions are equivalent:
\begin{enumerate}
\item there exists a set of polynomials $\{v_h(x)\mid 0\leq h \leq d \}$ satisfying that for any $h\in\{0,1,\ldots,d\}$, $\mathrm{deg}v_h(x)=h$ and $|X|E_h=v_h(|X|E_1)$ under the entry-wise product,
\item there exists a set of polynomials $\{v_h(x)\mid 0\leq h \leq d \}$ satisfying that for any $h,\ell\in\{0,1,\ldots,d\}$, $\mathrm{deg}v_h(x)=h$ and $q_h(\ell)=v_h(\theta_\ell^*)$, where $\theta_\ell^*=q_1(\ell)$,
\item The Krein matrix $\hat{B}_1=(q_{1,j}^h)_{j,h=0}^{d}$ is a tridiagonal matrix with non-zero superdiagonal and subdiagonal entries. 
\end{enumerate}
\end{proposition}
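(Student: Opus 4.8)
The plan is to work inside the adjacency algebra $\mathcal{A}$ equipped with the entry-wise product $\circ$, for which $A_0,\dots,A_d$ is a basis of mutually orthogonal idempotents, since they are $(0,1)$-matrices with pairwise disjoint supports whose sum is $J$. First I would record the expansions $|X|E_1=\sum_\ell \theta_\ell^* A_\ell$ and $|X|E_h=\sum_\ell q_h(\ell)A_\ell$, which follow from $E_h=\frac{1}{|X|}\sum_\ell q_h(\ell)A_\ell$ and $\theta_\ell^*=q_1(\ell)$. Because the $A_\ell$ are orthogonal $\circ$-idempotents, every polynomial $f$ satisfies $f(|X|E_1)=\sum_\ell f(\theta_\ell^*)A_\ell$ under $\circ$. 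Comparing this against $|X|E_h=\sum_\ell q_h(\ell)A_\ell$ shows that $|X|E_h=v_h(|X|E_1)$ holds if and only if $q_h(\ell)=v_h(\theta_\ell^*)$ for all $\ell$, with the very same polynomial $v_h$. This settles the equivalence of (1) and (2) immediately, so the real content lies in the equivalence with (3).

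For (1) $\Rightarrow$ (3), I would use that the evaluation map $f\mapsto f(|X|E_1)$ is an algebra homomorphism from $\mathbb{R}[x]$ into $(\mathcal{A},\circ)$. Since $\deg v_h=h$, the family $\{v_0,\dots,v_{h+1}\}$ is a basis of the polynomials of degree at most $h+1$, so $x\,v_h(x)=\sum_{k=0}^{h+1}\gamma_{h,k}v_k(x)$ for scalars $\gamma_{h,k}$ with $\gamma_{h,h+1}\neq 0$ (a ratio of leading coefficients). Applying the homomorphism gives $(|X|E_1)\circ(|X|E_h)=\sum_{k=0}^{h+1}\gamma_{h,k}(|X|E_k)$. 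On the other hand, scaling $E_1\circ E_h=\frac{1}{|X|}\sum_k q_{1,h}^k E_k$ yields $(|X|E_1)\circ(|X|E_h)=\sum_k q_{1,h}^k(|X|E_k)$, so comparing coefficients in the basis $\{|X|E_k\}$ gives $q_{1,h}^k=\gamma_{h,k}$. Hence $q_{1,h}^k=0$ for $k>h+1$ while $q_{1,h}^{h+1}\neq 0$, which establishes the superdiagonal part of $\hat{B}_1$.

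To obtain vanishing below the subdiagonal and nonvanishing on the subdiagonal, I would invoke the symmetry of the Krein parameters. Summing all entries of $E_i\circ E_j\circ E_k$ and using $\operatorname{sum}(E_h\circ E_k)=\operatorname{trace}(E_hE_k)=\delta_{hk}m_k$, where $m_k=\operatorname{rank}E_k$, one finds this sum equals $\frac{1}{|X|}m_k q_{i,j}^k$; since $\circ$ is commutative and associative the sum is symmetric in $i,j,k$, giving $m_k q_{i,j}^k=m_i q_{j,k}^i=m_j q_{k,i}^j$. Swapping the last two indices with $i=1$ produces $m_k q_{1,h}^k=m_h q_{1,k}^h$, so $q_{1,h}^k=0 \iff q_{1,k}^h=0$. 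For $k<h-1$ we have $h>k+1$, hence $q_{1,k}^h=0$ and thus $q_{1,h}^k=0$; and the nonzero superdiagonal entry $q_{1,k}^{k+1}$ forces the corresponding subdiagonal entry $q_{1,k+1}^{k}$ to be nonzero. This completes tridiagonality. I expect this symmetry step to be the main technical point, since it is what converts the one-sided, degree-based vanishing into genuine tridiagonality.

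Finally, for (3) $\Rightarrow$ (1) I would run the recurrence backwards. Set $v_0=1$ and $v_1(x)=x$, matching $q_0(\ell)=1$ and $q_1(\ell)=\theta_\ell^*$. Tridiagonality of $\hat{B}_1$ gives $(|X|E_1)\circ(|X|E_h)=q_{1,h}^{h+1}(|X|E_{h+1})+q_{1,h}^{h}(|X|E_h)+q_{1,h}^{h-1}(|X|E_{h-1})$; since $q_{1,h}^{h+1}\neq 0$ I can solve for $|X|E_{h+1}$, and induction shows it is a polynomial of degree exactly $h+1$ in $|X|E_1$ under $\circ$, which is precisely (1). A minor point to verify along the way is that the scalars $\theta_\ell^*$ are distinct, which follows from the invertibility of the eigenmatrix $Q$, as equal values would force two rows of $Q$ to coincide.
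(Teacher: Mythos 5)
Your proof is correct and follows essentially the same route as the source: the paper defers this proposition to Bannai--Ito, but its own proof of the coherent-configuration analogue (Proposition~\ref{prop:Q-poly}) uses exactly your structure --- the expansion $f(|X|E_1)=\sum_\ell f(\theta_\ell^*)A_\ell$ for $(1)\Leftrightarrow(2)$, the expansion of $xv_h(x)$ plus the multiplicity symmetry $m_k q_{1,h}^k=m_h q_{1,k}^h$ (Proposition~\ref{6}) for the tridiagonality, and the three-term recurrence for the converse. Your write-up is, if anything, more detailed than the paper's at the symmetry step.
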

\begin{proof}
See \cite[pp.193-194]{BI}. 
\end{proof}

The symmetric association scheme  $(X,\mathcal{R})$ is said to be $Q$-polynomial 
if one of the conditions in Proposition~\ref{prop:Q-polyas} holds.
For a $Q$-polynomial association scheme, set $a_i^*=q_{1,i}^i$ ($i\in\{0,1,\ldots,d\}$), $b_i^*=q_{1,i+1}^{i}$ ($i\in\{0,1,\ldots,d-1\}$), $c_i^*=q_{1,i-1}^i$ ($i\in\{1,2,\ldots,d\}$).

%%%%%%%%%%%%%%%%%%%%%%%%%%%%%%%%%%%%%%%%%%%%%%
\section{Coherent configurations}\label{sec:cc}

Let $X$ be a non-empty finite set. 
For a subset $R$ of $X\times X$, define the projection of $R$ as follows:
\begin{align*}
\mathrm{pr}_1(R)&=\{x\in X\mid (x,y)\in R \text{ for some }y\in X\},\\
\mathrm{pr}_2(R)&=\{y\in X\mid (x,y)\in R \text{ for some }x\in X\}.
\end{align*}

\begin{definition}\label{cc}
Let $X$ be a non-empty finite set and $\mathcal{R}=\{R_i \mid i\in I\}$ be a set of non-empty subsets of $X\times X$.
The pair $\mathcal{C}=(X,\mathcal{R})$ is a coherent configuration if the following properties are satisfied:
\begin{enumerate}
\item $\{R_i\}_{i\in I}$ is a partition of $X\times X$,
\item for any $i \in I$, $R_i^\top\in \mathcal{R}$, 
\item $R_i \cap \mathrm{diag}(X\times X)\neq \emptyset$ implies $R_i \subset\mathrm{diag}(X\times X)$,
\item for any $i,j,h\in I$, the number $|\{z\in X \mid (x,z)\in R_i,(z,y)\in R_j\}|$ is independent of the choice of $(x,y)\in R_h$.   
\end{enumerate}
\end{definition}
Let $A_i$ be the adjacency matrix of the graph $(X,R_i)$ for $i\in I$. 
We define the coherent algebra $\mathcal{A}$ of the coherent configuration $\mathcal{C}$ as the subalgebra of $\mathrm{Mat}_{|X|}(\mathbb{C})$ generated by $\{A_i \mid i\in I\}$ over $\mathbb{C}$. 
There uniquely exists a subset $\Omega$ in $I$ such that $\mathrm{diag}(X\times X)=\bigcup_{i\in \Omega}R_i$ by Definition~\ref{cc}(1) and (3).
We obtain the standard partition $\{X_i\}_{i\in \Omega}$ of $X$ where $X_i=\mathrm{pr}_1(R_i)=\mathrm{pr}_2(R_i)$ for $i\in\Omega$. 
We call $X_i$ a fiber of the coherent configuration $\mathcal{C}$.  
The following property of binary relations of coherent configurations was shown in \cite{Hig2}:
\begin{lemma}\label{relation}
For any $i\in I$, there exist $j,h\in \Omega$ such that $\mathrm{pr}_1(R_i)=X_j$, $\mathrm{pr}_2(R_i)=X_h$.
\end{lemma}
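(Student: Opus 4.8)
The plan is to reduce both assertions to the regularity axiom (Definition~\ref{cc}(4)) applied to the diagonal relations indexed by $\Omega$. Write $R_i^\top=R_{i^*}$, which lies in $\mathcal{R}$ by Definition~\ref{cc}(2), and for $j,k,h\in I$ and $(x,y)\in R_h$ abbreviate $p_{j,k}^h=|\{z\in X\mid (x,z)\in R_j,\ (z,y)\in R_k\}|$, which is well defined by Definition~\ref{cc}(4).

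First I would show that $\mathrm{pr}_1(R_i)$ is contained in a single fiber. Fix $j\in\Omega$ and evaluate $p_{j,i}^i$ on an arbitrary pair $(x,y)\in R_i$. Since $R_j\subseteq\mathrm{diag}(X\times X)$, the condition $(x,z)\in R_j$ forces $z=x$ together with $x\in X_j$; hence $p_{j,i}^i$ equals $1$ when $x\in X_j$ and $0$ otherwise. Because this number is independent of the choice of $(x,y)\in R_i$, for each $j\in\Omega$ we have either $\mathrm{pr}_1(R_i)\subseteq X_j$ or $\mathrm{pr}_1(R_i)\cap X_j=\emptyset$. As $\{X_j\}_{j\in\Omega}$ partitions $X$ and $R_i\neq\emptyset$, exactly one $j\in\Omega$ falls into the first case, so $\mathrm{pr}_1(R_i)\subseteq X_j$.

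Next I would upgrade this to equality using a degree count. For $x\in X_j$ we have $(x,x)\in R_j$, so consider $p_{i,i^*}^{j}$ evaluated on $(x,x)\in R_j$: it counts the $z$ with $(x,z)\in R_i$ and $(z,x)\in R_{i^*}=R_i^\top$, and since $(z,x)\in R_i^\top$ is equivalent to $(x,z)\in R_i$, it equals the out-degree $|\{z\mid (x,z)\in R_i\}|$ of $x$ in $R_i$. Regularity makes this out-degree constant over all $x\in X_j$. Since $R_i\neq\emptyset$ and $\mathrm{pr}_1(R_i)\subseteq X_j$, some vertex of $X_j$ has positive out-degree, so the common value is positive and every $x\in X_j$ is a first coordinate of a pair in $R_i$. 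Hence $X_j\subseteq\mathrm{pr}_1(R_i)$, giving $\mathrm{pr}_1(R_i)=X_j$.

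Finally, the claim for $\mathrm{pr}_2(R_i)$ follows by applying the two steps above to $R_i^\top=R_{i^*}\in\mathcal{R}$ together with the identity $\mathrm{pr}_2(R_i)=\mathrm{pr}_1(R_i^\top)$, producing $h\in\Omega$ with $\mathrm{pr}_2(R_i)=X_h$. I expect the first step, pinning $\mathrm{pr}_1(R_i)$ inside one fiber, to be the crux, since it is exactly where the interplay between Definition~\ref{cc}(3) and Definition~\ref{cc}(4) is used; the degree-constancy argument for equality is then routine. An equivalent formulation replaces these counts by the matrix identities $A_jA_i\in\{0,A_i\}$ and $\sum_{j\in\Omega}A_j=I$ in the coherent algebra, which may be cleaner to state but rests on the same two axioms.
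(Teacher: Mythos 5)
Your proof is correct, and it is essentially the standard argument: the paper itself gives no proof of this lemma but defers to Higman's \emph{Coherent configurations I} [Hig2], where the same reduction to the regularity axiom applied with a diagonal relation (first to locate $\mathrm{pr}_1(R_i)$ inside one fiber, then a constant-degree count to get equality, then transposition for $\mathrm{pr}_2$) is used. No gaps.
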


For $i,j\in \Omega$, define  
$I^{(i,j)}=\{R_\ell \mid \ell\in I, R_\ell \subset X_i\times X_j\}$. 
Lemma~\ref{relation} implies that $\{I^{(i,j)}\mid i,j\in\Omega\}$ is a partition of $I$.
We put $r_{i,j}=|I^{(i,j)}|-\delta_{i,j}$, and we call the matrix $(|I^{(i,j)}|)_{i,j\in\Omega}$ the type of the coherent configuration $\mathcal{C}$. 

Let $\varepsilon_{i,j}=1-\delta_{i,j}$.
By the partition $\{I^{(i,j)}\mid i,j\in\Omega\}$ of $I$, the elements of $I^{(i,j)}$ are renumbered as $R_{\varepsilon_{i,j}}^{(i,j)},\ldots,R_{r_{i,j}}^{(i,j)}$ such that $R_0^{(i,i)}=\mathrm{diag}(X_i\times X_i)$ and $(R_\ell^{(i,j)})^\top =R_\ell^{(j,i)}$. 
We denote the adjacency matrix of $R_\ell^{(i,j)}$ as $A_\ell^{(i,j)}$. 
For $i,j\in \Omega$, define by $\mathcal{A}^{(i,j)}$ the vector space spanned by  $A_\ell^{(i,j)}$ ($\varepsilon_{i,j}\leq \ell \leq r_{i,j}$) over $\mathbb{C}$. 
Then $\mathcal{A}^{(i,j)}\mathcal{A}^{(j,h)}\subset\mathcal{A}^{(i,h)}$ holds and  define intersection numbers $p_{\ell,m,n}^{(i,j,h)}$ as 
$$
A_\ell^{(i,j)}A_m^{(j,h)}=\sum\limits_{n=\varepsilon_{i,j}}^{r_{i,j}}p_{\ell,m,n}^{(i,j,h)}A_n^{(i,h)}.
$$
Set $k_\ell^{(i,j)}=p_{\ell,\ell,0}^{(i,j,i)}$.
Then $k_\ell^{(i,j)}=|\{y\in X_j\mid (x,y)\in R_\ell^{(i,j)}\}|$ for any $x\in X_i$.
We call $k_\ell^{(i,j)}$ the valency of $R_\ell^{(i,j)}$.

Let $\tilde{r}_{i,j}=r_{i,j}-\varepsilon_{i,j}$. 
Ito and Munemasa proved in \cite{IM} that if the fiber $\mathcal{C}^i=(X_i,I^{(i,i)})$ is a commutative association scheme, then there exists a basis $\{\varepsilon_{i,j}^{s}\mid s\in S, i,j\in F_s\}$ of $\mathcal{A}$ such that 
\begin{itemize}
\item $\{\varepsilon_{i,j}^{s}\mid i,j\in F_s\}$ ($s\in S$) generates a simple two-sided ideal $\mathfrak{C}_s$ of $\mathcal{A}$ with $\mathcal{A}=\oplus_{s\in S}\mathfrak{C}_s$, 
\item for any $s\in S$, $\varepsilon_{i,j}^{s}\varepsilon_{k,\ell}^{s}=\delta_{j,k}\varepsilon_{i,\ell}^{s}$ holds, 
\item for any $s\in S$, $(\varepsilon_{i,j}^{s})^*=\varepsilon_{j,i}^{s}$ holds, 
\item for any $s\in S$ and $i,j\in F_s$, $\varepsilon_{i,j}^{s}\in\bigcup_{k,\ell\in\Omega} \mathcal{A}^{(k,\ell)}$, 
\item for any $s\in S$ and $i,j\in F_s$, $\dim(\mathfrak{C}_s\cap \mathcal{A}^{(k,\ell)})\leq 1$.
\end{itemize}

In this paper we consider coherent configurations $\mathcal{C}$ such that the fiber $\mathcal{C}^i=(X_i,I^{(i,i)})$ is a symmetric association scheme 
for any $i\in \Omega$ and there exists a basis $\{E_\ell^{(i,j)}\mid i,j\in\Omega, 0\leq \ell\leq \tilde{r}_{i,j}\}$ of $\mathcal{A}$ such that 
\begin{enumerate}
\item[(B1)] for any $i,j\in\Omega$, $E_0^{(i,j)}=\frac{1}{\sqrt{|X_i||X_j|}}J_{|X_i|,|X_j|}$, where $J_{p,q}$ is the $p\times q$ all-ones matrix, \label{b1}
\item[(B2)] for any $i,j\in\Omega$, $\{E_\ell^{(i,j)}\mid 0\leq \ell \leq \tilde{r}_{i,j}\}$ is a basis of $\mathcal{A}^{(i,j)}$ as a vector space, 
\item[(B3)] for any $i,j\in\Omega,\ell\in \{0,1,\ldots,\tilde{r}_{i,j}\}$, $(E_\ell^{(i,j)})^\top=E_\ell^{(j,i)}$,
\item[(B4)] for any $i,j,i',j'\in\Omega$ and $\ell\in \{0,1,\ldots,\tilde{r}_{i,j}\}$, $\ell'\in \{0,1,\ldots,\tilde{r}_{i',j'}\}$, $E_\ell^{(i,j)}E_{\ell'}^{(i',j')}=\delta_{\ell,\ell'}\delta_{j,i'}E_\ell^{(i,j')}$.
\end{enumerate}

Since $\mathcal{A}^{(i,j)}$ is closed under the entry-wise product $\circ$, 
we define Krein parameters $q_{\ell,m,n}^{(i,j)}$  as follows:
$$
E_\ell^{(i,j)}\circ E_m^{(i,j)}=\frac{1}{\sqrt{|X_i||X_j|}}\sum\limits_{n=0}^{\tilde{r}_{i,j}}q_{\ell,m,n}^{(i,j)}E_n^{(i,j)}.
$$
We define Krein matrices $\hat{B}_\ell^{(i,j)}=(q_{\ell,m,n}^{(i,j)})_{m,n=0}^{\tilde{r}_{i,j}}$ for $\ell\in\{0,1,\ldots,\tilde{r}_{i,j}\}$.
 
For $i,j\in\Omega$, since $\{A_\ell^{(i,j)}\mid \varepsilon_{i,j}\leq \ell\leq r_{i,j}\}$ and $\{E_\ell^{(i,j)}\mid 0\leq \ell\leq \tilde{r}_{i,j}\}$ are bases of $\mathcal{A}^{(i,j)}$, there exist change-of-bases matrices  $P^{(i,j)}=(p_{h}^{(i,j)}(\ell))_{\substack{0\leq \ell\leq \tilde{r}_{i,j} \\ \varepsilon_{i,j}\leq h\leq r_{i,j} }}$, $Q^{(i,j)}=(q_h^{(i,j)}(\ell))_{\substack{\varepsilon_{i,j}\leq \ell\leq r_{i,j}\\ 0\leq h\leq \tilde{r}_{i,j}}}$ such that  
\begin{align*}
(A_{\varepsilon_{i,j}}^{(i,j)},\ldots,A_{r_{i,j}}^{(i,j)})&=(E_0^{(i,j)},\ldots,E_{\tilde{r}_{i,j}}^{(i,j)})P^{(i,j)}, \\
(E_0^{(i,j)},\ldots,E_{\tilde{r}_{i,j}}^{(i,j)})&=\frac{1}{\sqrt{|X_i||X_j|}}(A_{\varepsilon_{i,j}}^{(i,j)},\ldots,A_{r_{i,j}}^{(i,j)})Q^{(i,j)}, 
\end{align*}
equivalently, 
\begin{align*}
A_h^{(i,j)}=\sum_{\ell=0}^{\tilde{r}_{i,j}} p_h^{(i,j)}(\ell)E_\ell^{(i,j)},\quad E_h^{(i,j)}=\frac{1}{|X|}\sum_{\ell=\varepsilon_{i,j}}^{r_{i,j}} q_h^{(i,j)}(\ell)A_\ell^{(i,j)}.
\end{align*}
We will show several relations among $p_{\ell,m,n}^{(i,j,h)}$, $q_{\ell,m,n}^{(i,j)}$, $p_{h}^{(i,j)}(\ell)$, $q_{h}^{(i,j)}(\ell)$ in Appendix A as in the case of symmetric association schemes.  

%%%%%%%%%%%%%%%%%%%%%%%%%%%%%%%%%%%%%%%%%%%%%%%%%%%%%%%%%%%
\section{$Q$-polynomial properties of coherent configurations}\label{sec:qcc}

The following proposition characterizes $Q$-polynomial property.
\begin{proposition}\label{prop:Q-poly}
Let $\mathcal{C}$ be a coherent configuration such that each fiber is a symmetric association scheme and there exists a basis $\{E_\ell^{(i,j)}\mid i,j\in\Omega, 0\leq \ell\leq \tilde{r}_{i,j}\}$ of $\mathcal{A}$ satisfying (B1)-(B4).
The following conditions are equivalent:
\begin{enumerate}
\item for any $i,j\in\Omega$, there exists a set of polynomials $\{v_h^{(i,j)}(x)\mid  0\leq h\leq \tilde{r}_{i,j}\}$ satisfying that for any $h\in\{0,1,\ldots,\tilde{r}_{i,j}\}$, $\mathrm{deg}v_h^{(i,j)}(x)=h$ and $\sqrt{|X_i||X_j|}E_h^{(i,j)}=v_h^{(i,j)}(\sqrt{|X_i||X_j|}E_1^{(i,j)})$ under the entry-wise product,
\item for any $i,j\in\Omega$, there exists a set of polynomials $\{v_h^{(i,j)}(x)\mid  0\leq h\leq \tilde{r}_{i,j}\}$ satisfying that for any $h,\ell\in\{0,1,\ldots,\tilde{r}_{i,j}\}$, $\mathrm{deg}v_h^{(i,j)}(x)=h$ and $q_h^{(i,j)}(\ell)=v_h^{(i,j)}(\theta_\ell^{(i,j)})$, where $\theta_\ell^{(i,j)}=q_1^{(i,j)}(\ell)$,
\item for any $i,j\in\Omega$, the Krein matrix $\hat{B}_1^{(i,j)}$ is a tridiagonal matrix with non-zero superdiagonal
and subdiagonal entries. 
\end{enumerate}
\begin{proof}
$(1)\Leftrightarrow(2)$:
Putting $\theta_\ell^{(i,j)}=q_1^{(i,j)}(\ell)$, we have $E_1^{(i,j)}=\frac{1}{\sqrt{|X_i||X_j|}}\sum\limits_{\ell=\varepsilon_{i,j}}^{r_{i,j}}\theta_\ell^{(i,j)}A_\ell^{(i,j)}$.
Suppose $(1)$ holds.
Then $\sqrt{|X_i||X_j|}E_h^{(i,j)}=v_h^{(i,j)}(\sqrt{|X_i||X_j|}E_1^{(i,j)})=\sum\limits_{\ell=\varepsilon_{i,j}}^{r_{i,j}}v_h^{(i,j)}(\theta_\ell^{(i,j)})A_\ell^{(i,j)}$, 
so we obtain $q_h^{(i,j)}(\ell)=v_k^{(i,j)}(\theta_\ell^{(i,j)})$.
Conversely suppose $(2)$ holds.
Then $\sqrt{|X_i||X_j|}E_h^{(i,j)}=\sum\limits_{\ell=\varepsilon_{i,j}}^{r_{i,j}}q_\ell^{(i,j)}(h)A_\ell^{(i,j)}=\sum\limits_{\ell=\varepsilon_{i,j}}^{r_{i,j}}v_h^{(i,j)}(\theta_\ell^{(i,j)})A_\ell^{(i,j)}=v_h^{(i,j)}(\sqrt{|X_i||X_j|}E_1^{(i,j)})$.

$(2)\Rightarrow(3)$:
Suppose $(2)$ holds.
Since the polynomial $xv_h^{(i,j)}(x)$ can be written as a linear combination of $v_{h+1}^{(i,j)}(x),v_h^{(i,j)}(x),\ldots,v_0^{(i,j)}(x)$, 
$E_h^{(i,j)}\circ E_1^{(i,j)}$ is a linear combination of $E_{h+1}^{(i,j)},E_h^{(i,j)},\ldots,E_0^{(i,j)}$.
Therefore $q_{1,h,\ell}^{(i,j)}=0$ if $\ell\geq h+2$, and $q_{1,h,h+1}^{(i,j)}\neq0$.
By Proposition~\ref{6} (5), we obtain $m_\ell^{(i,j)}q_{1,h,\ell}^{(i,j)}=m_h^{(i,i)}q_{1,\ell,h}^{(i,j)}$.
Therefore $q_{1,h,\ell}^{(i,j)}=0$ if and only if $q_{1,\ell,h}^{(i,j)}=0$.
Hence $q_{1,h,\ell}^{(i,j)}=0$ if $\ell\leq h-2$, and $q_{1,h,h-1}^{(i,j)}\neq 0$.

$(3)\Rightarrow(1)$: 
Suppose $(3)$ holds.
Set $b_h^{(i,j)}=q_{1,h+1,h}^{(i,j)}$, $a_h^{(i,j)}=q_{1,h,h}^{(i,j)}$, $c_h^{(i,j)}=q_{1,h,h+1}^{(i,j)}$.
Since $E_1^{(i,j)}\circ E_h^{(i,j)}=\frac{1}{\sqrt{|X_i||X_j|}}\sum\limits_{\alpha=0}^{\tilde{r}_{i,j}}q_{1,h,\alpha}^{(i,j)}E_\alpha^{(i,j)}$, 
$E_1^{(i,j)}\circ E_h^{(i,j)}=b_{h-1}^{(i,j)}E_{h-1}^{(i,j)}+a_h^{(i,j)}E_h^{(i,j)}+c_{h+1}^{(i,j)}E_{h+1}^{(i,j)}$.
We define $v_0^{(i,j)}(x)=1$, $v_1^{(i,j)}(x)=x$ and polynomials $v_h^{(i,j)}(x)$ of degree $h$ as recurrence 
$$xv_h^{(i,j)}(x)=b_{h-1}^{(i,j)}v_{h-1}^{(i,j)}(x)+a_h^{(i,j)}v_h^{(i,j)}(x)+c_{h+1}^{(i,j)}v_{h+1}^{(i,j)}(x) .$$
Then $v_h^{(i,j)}(\sqrt{|X_i||X_j|}E_1^{(i,j)})=\sqrt{|X_i||X_j|}E_h^{(i,j)}$.
\end{proof}
\end{proposition}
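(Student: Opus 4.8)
The plan is to establish the cyclic chain $(1)\Leftrightarrow(2)$, $(2)\Rightarrow(3)$, and $(3)\Rightarrow(1)$, mirroring the proof of Proposition~\ref{prop:Q-polyas} for a single symmetric association scheme but carried out independently in each block $\mathcal{A}^{(i,j)}$ and with careful attention to the normalization $\sqrt{|X_i||X_j|}$ forced by (B1). Throughout, the decisive structural fact is that $\mathcal{A}^{(i,j)}$ is closed under the entry-wise product $\circ$, so that the Krein parameters $q_{\ell,m,n}^{(i,j)}$ are well defined and multiplication by $E_1^{(i,j)}$ under $\circ$ plays the role of multiplication by the indeterminate $x$.

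For $(1)\Leftrightarrow(2)$ I would use the dictionary between the two bases of $\mathcal{A}^{(i,j)}$. The matrices $A_\ell^{(i,j)}$ are $(0,1)$-matrices with pairwise disjoint supports summing to $J_{|X_i|,|X_j|}$, hence they are pairwise orthogonal idempotents for $\circ$. Writing $\sqrt{|X_i||X_j|}\,E_1^{(i,j)}=\sum_{\ell}\theta_\ell^{(i,j)}A_\ell^{(i,j)}$ with $\theta_\ell^{(i,j)}=q_1^{(i,j)}(\ell)$, the evaluation of any polynomial at $\sqrt{|X_i||X_j|}\,E_1^{(i,j)}$ under $\circ$ therefore acts coefficient-by-coefficient on the $A_\ell^{(i,j)}$, giving $v_h^{(i,j)}(\sqrt{|X_i||X_j|}\,E_1^{(i,j)})=\sum_\ell v_h^{(i,j)}(\theta_\ell^{(i,j)})\,A_\ell^{(i,j)}$. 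Comparing with $\sqrt{|X_i||X_j|}\,E_h^{(i,j)}=\sum_\ell q_h^{(i,j)}(\ell)\,A_\ell^{(i,j)}$ and using linear independence of the $A_\ell^{(i,j)}$ yields $q_h^{(i,j)}(\ell)=v_h^{(i,j)}(\theta_\ell^{(i,j)})$; running the same computation backwards gives the converse.

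The implication $(2)\Rightarrow(3)$ I would obtain from degree bookkeeping followed by a symmetry argument. If $\deg v_h^{(i,j)}=h$, then $x\,v_h^{(i,j)}(x)$ lies in the span of $v_0^{(i,j)},\ldots,v_{h+1}^{(i,j)}$ with nonzero $v_{h+1}^{(i,j)}$-coefficient; translating through the equivalence above, $E_1^{(i,j)}\circ E_h^{(i,j)}$ is a combination of $E_0^{(i,j)},\ldots,E_{h+1}^{(i,j)}$ with nonzero $E_{h+1}^{(i,j)}$-term, i.e. $q_{1,h,\ell}^{(i,j)}=0$ for $\ell\geq h+2$ and $q_{1,h,h+1}^{(i,j)}\neq0$. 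This is only half of tridiagonality. \textbf{The hard part will be the other half:} to conclude $q_{1,h,\ell}^{(i,j)}=0$ for $\ell\leq h-2$ and $q_{1,h,h-1}^{(i,j)}\neq0$, I expect to need a duality relation of the form $m_\ell\,q_{1,h,\ell}^{(i,j)}=m_h\,q_{1,\ell,h}^{(i,j)}$ relating a Krein parameter to its transpose through positive multiplicities $m_h$, which forces $q_{1,h,\ell}^{(i,j)}=0$ if and only if $q_{1,\ell,h}^{(i,j)}=0$. Establishing this identity in the rectangular ($i\neq j$) blocks, where $\mathcal{A}^{(i,j)}$ is only a module and not a self-contained algebra, is the delicate point, and is exactly where I would invoke the parameter relations collected in the appendix.

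Finally, for $(3)\Rightarrow(1)$ I would read off the three-term recurrence directly. Tridiagonality gives $E_1^{(i,j)}\circ E_h^{(i,j)}=b_{h-1}^{(i,j)}E_{h-1}^{(i,j)}+a_h^{(i,j)}E_h^{(i,j)}+c_{h+1}^{(i,j)}E_{h+1}^{(i,j)}$ with nonzero off-diagonal coefficients, so I would set $v_0^{(i,j)}=1$, $v_1^{(i,j)}=x$, and define $v_{h+1}^{(i,j)}$ by the matching scalar recurrence. An induction on $h$, using the orthogonal-idempotent structure of the $A_\ell^{(i,j)}$ once more, then yields $v_h^{(i,j)}(\sqrt{|X_i||X_j|}\,E_1^{(i,j)})=\sqrt{|X_i||X_j|}\,E_h^{(i,j)}$, while the nonvanishing of the leading coefficient $c_{h+1}^{(i,j)}$ guarantees $\deg v_h^{(i,j)}=h$, closing the cycle.
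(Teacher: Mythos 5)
Your proposal follows essentially the same route as the paper's proof: coefficient-wise evaluation on the $\circ$-orthogonal idempotents $A_\ell^{(i,j)}$ for $(1)\Leftrightarrow(2)$, degree bookkeeping combined with the multiplicity--Krein symmetry $m_\ell^{(i,j)}q_{1,h,\ell}^{(i,j)}=m_h^{(i,j)}q_{1,\ell,h}^{(i,j)}$ for $(2)\Rightarrow(3)$, and the three-term recurrence for $(3)\Rightarrow(1)$. The duality relation you single out as the delicate point is precisely the identity the paper proves in Appendix~\ref{sec:a} (Proposition~\ref{6}, via applying $\tau$ to $E_\ell^{(i,j)}\circ E_{\ell'}^{(i,j)}\circ E_n^{(i,j)}$) and invokes at exactly that step, so your plan is correct and matches the paper's argument.
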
  
\begin{definition}\label{Q-polycc}
Let $\mathcal{C}$ be a coherent configuration such that each fiber is a symmetric association scheme and there exists a basis $\{E_\ell^{(i,j)}\mid i,j\in\Omega, 0\leq \ell\leq \tilde{r}_{i,j}\}$ of $\mathcal{A}$ satisfying (B1)-(B4). 
The coherent configuration $\mathcal{C}$ is said to be $Q$-polynomial if one of $(1)$--$(3)$ of Proposition~\ref{prop:Q-poly} holds. 
\end{definition}

%%%%%%%%%%%%%%%%%%%%%%%%%%%%%%%%%%%%%%%%%%%%%%%%%%%%%%%%%%%%%%%%%%%%%%%%%%%%%%%%
%%%%%%%%%%%%%%%%%%%%%%%%%%%%%%%%%%%%%%%%%%%%%%%%%%%%%%%%%%%%%%%%%%%%%%%%%%%%%%%%%%%%%%%%%%%%%%%%%%%%%%%%%%%%%%%%%%%%%%%%%%%%%%%%
\section{Examples of $Q$-polynomial coherent configurations}\label{sec:example}
\subsection{An $n$-th power of a $Q$-polynomial association scheme}\label{subsec:pq}
We introduce an $n$-th power of a symmetric association scheme for a positive integer $n\geq 2$. 
Let $\mathfrak{X}=(X,\{R_i\}_{i=0}^d)$ be a symmetric association scheme with primitive idempotents $E_0,E_1,\ldots,E_d$. 
We define a coherent configuration $\mathcal{C}_n=(\coprod_{i=1}^n X_i,\{R_\ell^{(i,j)} \mid i,j\in\{1,2,\ldots,n\}, \ell\in\{0,1,\ldots,d\}\})$ where $X_i=X$ and $\coprod_{i=1}^n X_i$ is a disjoint union of $X_i$'s, and $R_\ell^{(i,j)}=\{(x,y)\in (\coprod_{i=1}^n X_i)^2 \mid x\in X_i,y\in X_j,(x,y)\in R_\ell\}$\footnote{The index $\ell$ of $R_\ell^{(i,j)}$ should start with $1$ when $i\neq j$, but we avoid it.}. We call $\mathcal{C}_n$ an $n$-th power of $\mathfrak{X}$. 

For $i,j\in\{1,2,\ldots,n\}$ and $\ell\in\{0,1,\ldots,d\}$, define $E_\ell^{(i,j)}=e_{i,j}\otimes E_\ell$ where $e_{i,j}$ denotes the $n\times n$ matrix with a $1$ in the $(i,j)$-entry and $0$'s elsewhere.  Then $\{E_\ell^{(i,j)}\mid i,j\in\{1,2,\ldots,n\}, \ell\in\{0,1,\ldots,d\}\}$ is a basis of the coherent algebra of $\mathcal{C}_n$ satisfying (B1)--(B4).
\begin{example}
Let $n$ be a positive integer at least two and  $\mathfrak{X}=(X,\{R_\ell\}_{\ell=0}^d)$ a $Q$-polynomial association scheme with respect to the ordering of the primitive idempotents $E_0,E_1,\ldots,E_d$. 
Then the coherent configuration $\mathcal{C}_n$ is a $Q$-polynomial coherent configuration. 
\end{example}

\subsection{Delsarte designs of a $Q$-polynomial association scheme}

Let $\mathfrak{X}=(X,\{R_i\}_{i=0}^d)$ be a $Q$-polynomial association scheme with respect to the ordering of the primitive idempotents $E_0,E_1,\ldots,E_d$.  
For a non-empty subset $C$ in $X$, we define the characteristic vector $\chi=\chi_C$ as a column vector indexed by $X$ whose $x$-th entry is $1$ if $x\in C$, and $0$ otherwise. 
For a positive integer $t$, a subset $C$ is said to be a (Delsarte) $t$-design if $E_i \chi=0$ for any $i\in\{1,\ldots,t\}$.  

Define a real numbers $b_i$ ($i\in\{0,1,\ldots,d\}$) by $b_i=\frac{|X|}{|Y|}\chi^\top E_i \chi$.
Note that the numbers $b_i$ are non-negative because $E_i$ is positive semidifinite. 
A subset $C$ is a $t$-design if and only if $b_1=\cdots=b_t=0$.  
The vector $(b_i)_{i=0}^d$ is said to be the dual inner distribution of $C$. 
Designs in the Hamming schemes or Johnson scheme are characterized by orthogonal arrays or block designs. 

\begin{example} 
An orthogonal array $\mathrm{OA}_\lambda(t,n,q)$ is a $\lambda q^t\times n$ matrix over an alphabet of size $q$ in which each set of $t$ columns contains each $t$-tuples over the alphabet exactly $\lambda$ times as a row.  
An orthogonal array $\mathrm{OA}_\lambda(t,n,q)$ is a $t$-design in a Hamming scheme $H(n,q)$ with respect to the ordering of the primitive idempotents determined from $b_i^*=(n-i)(q-1),c_i^*=i$. 
\end{example}

\begin{example} 
A $t$-$(v,k,\lambda)$ design is a collection of $k$-subsets (called blocks) of a $v$-set such that every $t$-subset is contained in exactly $\lambda$ blocks. 
A  $t$-$(v,k,\lambda)$ design is a $t$-design in a Johnson scheme $J(v,k)$ with respect to the ordering of the primitive idempotents determined from $b_i^*=\frac{v(v-1)(v-i+1)(v-k-i)(k-i)}{k(v-k)(v-2i+1)(v-2i)},c_i^*=\frac{v(v-1)i(k-i+1)(v-k-i+1)}{k(v-k)(v-2i+2)(v-2i+1)}$. 
\end{example}

For a subset $C$, define 
\begin{align*}
A(C)&=\{\ell \mid 1\leq \ell \leq d,R_\ell \cap(C\times C)\neq \emptyset\}. 
\end{align*}
Let $s=|A(C)|$ and we call $s$ the degree of $C$. 
Let  
 $A(C)=\{\alpha_1,\alpha_2,\ldots,\alpha_{s}\}$ and set $\alpha_0=0$.  
For $\ell\in\{0,1,\ldots,s\}$, define a subset $R_\ell$ of $C\times C$ by 
$$
R_\ell=\{(x,y)\in C\times C \mid (x,y)\in R_{\alpha_\ell} \}.
$$
The following theorem is due to \cite{D}. 
\begin{theorem}\label{thm:Q-polyDel}
Let $\mathfrak{X}=(X,\{R_i\}_{i=0}^d)$ be a $Q$-polynomial association scheme with respect to the ordering of the primitive idempotents $E_0,E_1,\ldots,E_d$.
Let $C$ be a $t$-design with degree $s$. 
If $2s-2 \leq t$ holds, then $(C, \{R_\ell \mid 0\leq \ell \leq s\})$ is a $Q$-polynomial association scheme.
\end{theorem}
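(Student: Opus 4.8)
The plan is to show that the restriction $(C,\{R_\ell\mid 0\le\ell\le s\})$ satisfies the axioms of a symmetric association scheme and that it is $Q$-polynomial, by pushing the $Q$-polynomial structure of the ambient scheme $\mathfrak{X}$ down to $C$ via the characteristic vector $\chi=\chi_C$. The central device is the \emph{outer distribution} or, dually, the fact that for a $t$-design the polynomials $v_h(x)$ of the $Q$-polynomial structure evaluate cleanly on $C$. Concretely, I would introduce the matrices $\bar A_\ell$ (the restriction of $A_{\alpha_\ell}$ to $C\times C$) and the natural candidate idempotents obtained by restricting the $E_h$; the whole argument hinges on controlling how $E_h$ interacts with $\chi$.

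First I would set up the key identity. Using the $Q$-polynomial property (condition (1) of Proposition~\ref{prop:Q-polyas}), write $|X|E_h=v_h(|X|E_1)$ under the entrywise product, so that the entry $(E_h)_{x,y}$ depends only on the relation $R_\ell\ni(x,y)$ through $v_h(\theta^*_\ell)$ where $\theta^*_\ell=q_1(\ell)$. The degree condition $\mathrm{deg}\,v_h=h$ lets me invert: the $s+1$ restricted matrices $\bar A_0,\dots,\bar A_s$ and the $s+1$ restricted idempotent-blocks $\bar E_0,\dots,\bar E_s$ each span the same space, a subalgebra of $\mathrm{Mat}_{|C|}$ under \emph{both} ordinary and entrywise product. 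This is where the degree bound enters: to close the entrywise algebra one needs products $\bar E_i\circ \bar E_j$ to stay within the span, and to close the ordinary algebra (the Bose--Mesner-type closure giving well-defined intersection numbers) one needs that a suitable Gram/moment computation only ever invokes $E_1,\dots,E_{2s-2}$ applied to $\chi$. Since $C$ is a $t$-design with $t\ge 2s-2$, all those $E_i\chi$ vanish for $1\le i\le 2s-2$, which forces the required products to collapse onto the span of $\{\bar A_\ell\}$ with constant coefficients — exactly the intersection-number axiom. The symmetry axioms and $R_0=\mathrm{diag}(C\times C)$ are immediate from symmetry of $\mathfrak{X}$ and the definition of $A(C)$.

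Having established that $(C,\{R_\ell\})$ is a symmetric association scheme, I would then read off its $Q$-polynomiality directly from the same polynomials: the restricted idempotents $\bar E_h$ are (up to normalization) primitive idempotents of the restricted scheme, and the relation $|C|\,\bar E_h=v_h(|C|\,\bar E_1)$ persists because $v_h$ has degree $h$ and the entrywise powers of $\bar E_1$ remain expressible through the $\bar A_\ell$. This is precisely condition (1) of Proposition~\ref{prop:Q-polyas} for the restricted scheme, giving the $Q$-polynomial conclusion. The verification that the $\bar E_h$ are genuinely idempotent — that restriction commutes suitably with the idempotent relations $E_iE_j=\delta_{ij}E_i$ — again reduces to the vanishing $E_i\chi=0$ for small $i$, via the standard computation $\bar E_i\bar E_j=\frac{1}{|C|}(\text{terms involving }E_{i+j\text{-range}}\chi)$.

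The main obstacle I expect is \emph{proving the intersection numbers are well-defined}, i.e.\ that $\bar A_\ell \bar A_m$ lies in the span of the $\bar A_n$ with row-independent coefficients. The subtle point is that a product $A_{\alpha_\ell}A_{\alpha_m}$ in the big scheme may have support on relations $R_\beta$ with $\beta\notin A(C)$; these contribute zero to $\bar A_\ell\bar A_m$ only because $C\times C$ meets no such $R_\beta$, but one must check the \emph{counts} are constant along $C$. The clean way is to argue through the idempotent side: show $\bar E_h$ form a system of orthogonal idempotents spanning a commutative algebra closed under $\circ$, which automatically yields a symmetric association scheme by the Bose--Mesner correspondence, and here the design condition $t\ge 2s-2$ is exactly what guarantees orthogonality and idempotency survive restriction. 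So I would prioritize the idempotent-theoretic route over a direct count of intersection numbers, and the design inequality $2s-2\le t$ is the single hypothesis doing all the work.
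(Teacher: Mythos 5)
Your idempotent-theoretic route is essentially the approach the paper takes: the paper defers Theorem~\ref{thm:Q-polyDel} itself to Delsarte, but its proof of the generalization Theorem~\ref{thm:Q-polyccDel} (whose $n=1$ case is exactly this statement) proceeds by showing $\|\,|X|E_\alpha\Delta_C E_\beta-|C|\delta_{\alpha,\beta}E_\alpha\|^2=|C|\sum_{\ell}q_{\alpha,\beta}^{\ell}b_\ell=0$, which is precisely your observation that orthogonality and idempotency of the restricted idempotents survive because the Krein parameters $q_{\alpha,\beta}^{\ell}$ vanish for $\ell>\alpha+\beta$ (with $\alpha+\beta\le 2s-2\le t$) while the dual distribution entries $b_\ell$ vanish for $\ell\le t$. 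Your plan to deduce closure under ordinary multiplication from the orthogonal idempotents rather than counting intersection numbers directly, and to read off $Q$-polynomiality from the inherited Krein parameters, matches the paper's argument.
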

We then generalize Theorem~\ref{thm:Q-polyDel} to disjoint designs in a $Q$-polynomial association scheme.   
Let $X_1,X_2,\ldots,X_n$ be disjoint subsets of $X$.  
Define 
\begin{align*}
A(X_i,X_j)&=\{\ell \mid 1\leq \ell \leq d,R_\ell \cap(X_i\times X_j)\neq \emptyset\}.
\end{align*}
Let $s_{i,j}=|A(X_i,X_j)|$ and  
 $A(X_i,X_j)=\{\alpha_1^{(i,j)},\alpha_2^{(i,j)},\ldots,\alpha_{s_{i,j}}^{(i,j)}\}$ and set $\alpha_0^{(i,i)}=0$.  
For $\ell\in\{\varepsilon_{i,j},1,\ldots,s_{i,j}\}$, define a subset $R_\ell^{(i,j)}$ of $X\times X$ by 
$$
R_\ell^{(i,j)}=\{(x,y)\in X\times X \mid x\in X_i,y\in X_j, (x,y)\in R_{\alpha_\ell^{(i,j)}} \}.
$$
Denote by $A_\ell^{(i,j)}$ the adjacency matrix of the graph $(\bigcup_{i=1}^n X_i,R_\ell^{(i,j)})$. 
For $i\in\{1,2,\ldots,n\}$, define $\Delta_{X_i}$ be the diagonal matrix indexed by the elements of $X$ with $(x,x)$-entry equal to $1$ if $x\in X_i$ and $0$ otherwise, and 
$\tilde{\Delta}_{X_i}$ as the matrix obtained from $\Delta_{X_i}$ by restricting the rows to $\bigcup_{i=1}^n X_i$. 
Note that 
\begin{align}\label{eq:dd}
(\tilde{\Delta}_{X_i})^\top\tilde{\Delta}_{X_i}=\Delta_{X_i}\text{ and }\tilde{\Delta}_{X_i}\Delta_{X_i}=\tilde{\Delta}_{X_i}.
\end{align}
For a real matrix $A$, define $||A||=\sqrt{\mathrm{tr}(AA^\top)}$. 

\begin{theorem}\label{thm:Q-polyccDel}
Let $\mathfrak{X}=(X,\{R_i\}_{i=0}^d)$ be a $Q$-polynomial association scheme with respect to the ordering of the primitive idempotents $E_0,E_1,\ldots,E_d$.
Let $X_i$ be a $t_i$-design for $i \in \{1,2,\dots,n\}$.
Assume that $X_i \cap X_j=\emptyset$ for distinct integers $i ,j \in \{1,2,\dots,n\}$.
If $s_{i,j}+s_{j,h}-2 \leq t_j$ holds for any $i,j,h \in \{1,2,\dots ,n\}$, 
then $(\bigcup_{i=1}^n X_i, \{R_\ell^{(i,j)} \mid 1\leq i,j\leq n,\varepsilon_{i,j}\leq \ell \leq s_{i,j}\})$ is a $Q$-polynomial coherent configuration.
\end{theorem}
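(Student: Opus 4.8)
The plan is to establish the theorem in two stages: first verify that the stated structure is a coherent configuration, and then verify that it is $Q$-polynomial in the sense of Definition~\ref{Q-polycc}. The first stage is essentially the content of the result from \cite{S} quoted in the introduction, so I would focus most of the effort on producing, for each pair $(i,j)$, a suitable basis of idempotents $E_\ell^{(i,j)}$ satisfying (B1)--(B4) and then checking the tridiagonality condition (3) of Proposition~\ref{prop:Q-poly}. To that end, first I would introduce the ambient idempotents $E_0,\dots,E_d$ of $\mathfrak{X}$ and use the maps $\tilde{\Delta}_{X_i}$ to transport them down to the subsets: the natural candidate is to set $F_\ell^{(i,j)} = c_{\ell}^{(i,j)}\,\tilde{\Delta}_{X_i} E_\ell (\tilde{\Delta}_{X_j})^\top$ for a suitable normalizing scalar, using the relations \eqref{eq:dd} to control products. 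The design hypotheses $X_i$ being $t_i$-designs force $E_\ell \chi_{X_i}=0$ for $\ell \le t_i$, and this is what makes the block products collapse; I would use this to verify (B4), the key multiplicative orthogonality $E_\ell^{(i,j)}E_{\ell'}^{(i',j')}=\delta_{\ell,\ell'}\delta_{j,i'}E_\ell^{(i,j')}$.

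The central quantitative step is to show that these transported idempotents span the same space as the relation matrices $A_\ell^{(i,j)}$ and that there are exactly $\tilde r_{i,j}+1$ of them, i.e.\ that the degree count matches. Here the inequality $s_{i,j}+s_{j,h}-2 \le t_j$ enters: it guarantees that when one multiplies $A_\ell^{(i,j)}$-type matrices across a common fiber $X_j$, the products close up within the span, with no ``leakage'' into idempotents $E_m$ with $m$ beyond the design strength. Concretely, I would expand $A_\ell^{(i,j)}$ in the ambient eigenbasis, compute $A_\ell^{(i,j)} A_m^{(j,h)}$, and use $E_m \chi_{X_j}=0$ for $m\le t_j$ together with the degree bound to see that only the admissible idempotents survive. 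This simultaneously proves closure (condition (4) of Definition~\ref{cc}) and produces the idempotent basis.

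Once the basis is in hand, the $Q$-polynomial property should follow by transporting the $Q$-polynomiality of $\mathfrak{X}$ itself. Since $\mathfrak{X}$ is $Q$-polynomial, $E_1$ generates the $E_\ell$ as polynomials under $\circ$, and the Krein matrix $\hat B_1$ of $\mathfrak{X}$ is tridiagonal with nonzero off-diagonal entries by Proposition~\ref{prop:Q-polyas}. I would show that $E_1^{(i,j)} \circ E_h^{(i,j)}$ inherits the three-term structure from $E_1 \circ E_h$ in $\mathfrak{X}$: the entry-wise product of the transported idempotents is again a transported entry-wise product on the sub-block $X_i\times X_j$, so the Krein matrix $\hat B_1^{(i,j)}$ is a principal-type compression of $\hat B_1$ and remains tridiagonal. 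The main subtlety is confirming that the relevant superdiagonal and subdiagonal entries do not vanish after restriction to the design; I would argue this does not happen precisely because the $E_h^{(i,j)}$ for $0\le h\le \tilde r_{i,j}$ are linearly independent on the sub-block, so any nonzero off-diagonal Krein parameter of $\mathfrak{X}$ that is needed survives.

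The hard part will be the bookkeeping in the second stage, matching the indices $\ell$ of the restricted relations $R_\ell^{(i,j)}$ (which are reindexed through the sets $A(X_i,X_j)$) with the idempotent indices, and verifying that the degree condition $s_{i,j}+s_{j,h}-2\le t_j$ is exactly what forces both closure and the tridiagonal pattern to be preserved uniformly across all triples $(i,j,h)$. I expect the cleanest route is to first prove closure and produce the idempotents in one combined computation using the design property, and only afterwards invoke Proposition~\ref{prop:Q-poly}(3), rather than trying to exhibit the generating polynomials $v_h^{(i,j)}$ directly.
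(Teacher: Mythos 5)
Your proposal is correct and follows essentially the same route as the paper: transport the ambient primitive idempotents to the blocks via $E_\ell^{(i,j)}=\tfrac{|X|}{\sqrt{|X_i||X_j|}}\tilde{\Delta}_{X_i}E_\ell(\tilde{\Delta}_{X_j})^\top$, use the design property of $X_j$ together with the bound $s_{i,j}+s_{j,h}-2\leq t_j$ (which kills the Krein coefficients $q_{\alpha,\beta}^\ell$ for $\ell\geq t_j+1$) to prove the key identity $|X|E_\alpha\Delta_{X_j}E_\beta=|X_j|\delta_{\alpha,\beta}E_\alpha$ and hence (B1)--(B4) and coherence, and then conclude $Q$-polynomiality from Proposition~\ref{prop:Q-poly}(3) because the Krein numbers of the restricted configuration are positive scalar multiples of those of $\mathfrak{X}$.
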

\begin{proof}
Let $i,j,h\in \{1,2,\ldots,n\}$, $\alpha\in\{0,1,\ldots,s_{i,j}-1\}$, $\beta\in\{0,1,\ldots, s_{j,h}-1\}$. 
Since $s_{i,j}+s_{j,h}-2\leq t_j$, it holds that $\alpha+\beta+1\leq t_j+1$ and $q_{\alpha,\beta}^\ell=0$ for $\ell\geq t_j+1$. 
Since $X_j$ is a $t_j$-design, the dual distribution $(b_\ell^{(j)})_{\ell=0}^{d}$ of $X_j$ satisfies that $b_\ell^{(j)}=0$ for $\ell\leq t_j$. Then  
\begin{align*}
|||X|E_{\alpha}\Delta_{X_j} E_{\beta}-|X_j|\delta_{\alpha,\beta}E_{\alpha}||^2&=|X_j|\sum\limits_{\ell=1}^d q_{\alpha,\beta}^\ell b_\ell^{(j)} \displaybreak[0]\\
&=|X_j|\left(\sum\limits_{\ell=1}^{t_j}q_{\alpha,\beta}^\ell b_\ell^{(j)}+\sum\limits_{\ell=t_j+1}^d q_{\alpha,\beta}^\ell b_\ell^{(j)}    \right) \\
&=0.
\end{align*}
Therefore 
$$
|X|E_{\alpha}\Delta_{X_j} E_{\beta}=|X_j|\delta_{\alpha,\beta}E_{\alpha}.
$$
Multiplying $\Delta_{X_i}$ on the left side and $\Delta_{X_h}$ on the right side, we obtain
\begin{align}
|X|\Delta_{X_i}E_{\alpha}\Delta_{X_j} E_{\beta}\Delta_{X_h}=|X_j|\delta_{\alpha,\beta}\Delta_{X_i}E_{\alpha}\Delta_{X_h}. \label{eq:11}
\end{align}
Define 
\begin{align*}
A_\ell^{(i,j)}=\tilde{\Delta}_{X_i}A_\ell(\tilde{\Delta}_{X_j})^\top,\quad 
E_{\ell'}^{(i,j)}=\frac{|X|}{\sqrt{|X_i||X_j|}}\tilde{\Delta}_{X_i}E_{\ell'}(\tilde{\Delta}_{X_j})^\top 
\end{align*}
for $i,j\in\{1,2,\ldots,n\}, \ell \in A(X_i,X_j), \ell' \in \{0,1,\ldots,s_{i,j}-\varepsilon_{i,j}\}$.  
(\ref{eq:11}) with \eqref{eq:dd} implies that 
$$
E_{\alpha}^{(i,j)}E_{\beta}^{(i',j')}=\delta_{\alpha,\beta}\delta_{j,i'}E_{\alpha}^{(i,j')}. 
$$
Therefore $\{E_\ell^{(i,j)} \mid 1\leq i,j\leq n, 0\leq \ell \leq s_{i,j}-\varepsilon_{i,j}\}$ is linearly independent and 
$$
\text{span}\{E_\ell^{(i,j)} \mid 1\leq i,j\leq n, 0\leq \ell\leq s_{i,j}-\varepsilon_{i,j}  \}
$$ 
is closed under ordinary multiplication. 
Since 
$$
\text{span}\{A_\ell^{(i,j)} \mid 1\leq i,j\leq n, \ell\in A(X_i,X_j)  \}=\text{span}\{E_\ell^{(i,j)} \mid 1\leq i,j\leq n, 0\leq \ell\leq s_{i,j}-\varepsilon_{i,j}  \}
$$ 
holds, $(\bigcup_{i=1}^n X_i, \{R_\ell^{(i,j)} \mid 1\leq i,j\leq n,\varepsilon_{i,j}\leq \ell \leq s_{i,j}\}) $ is a coherent configuration.
Krein numbers of the coherent configuration are positive scalar multiple of those for the association scheme, so Proposition~\ref{prop:Q-poly}(3) is satisfied.
\end{proof}

\begin{example}
A $Q$-polynomial association scheme with $d$ classes is $Q$-antipodal if $b_j^*=c_{d-j}^*$ for all $j$ except possibly $j=\lfloor \frac{d}{2}\rfloor$, see \cite{MMW} for more information.  
Let $\mathfrak{X}=(X,\{R_i\}_{i=0}^d)$ be a $Q$-antipodal $Q$-polynomial association scheme with $Q$-antipodal classes $X_1,X_2,\ldots,X_w$.
\cite[Corollary 4.5 and Theorem 4.7]{MMW} imply that 
$s_{i,j}$ is equal to $\lfloor\frac{d}{2} \rfloor$ if $i=j$ and $\lceil \frac{d}{2}\rceil $ otherwise and $t_j=d-1$.
Then $s_{i,j}+s_{j,h}-2\leq t_j$ holds for any $i,j,h\in\{1,2,\ldots,w\}$, 
hence $(\bigcup \nolimits_{i=1}^w X_i, \{R_\ell^{(i,j)} \mid 1\leq i,j\leq \omega,\varepsilon_{i,j}\leq \ell \leq s_{i,j}\}$ is a $Q$-polynomial coherent configuration and this coherent configuration was shown to be uniform \cite[Theorem~5.1]{DMM}. See \cite{DMM} for $Q$-antipodal association schemes and uniform coherent configurations. 
\end{example}

\begin{example}
A $2e$-design $C$ in $H(n,q)$ satisfies an inequality $|C|\leq \sum_{i=0}^e \binom{n}{i}(q-1)^i$. 
A $2e$-design is said to be tight if equality is attained above. 
Tight $4$-designs in $H(n,q)$ have been classified in \cite{N,GSV}. 
The technique developed in \cite{GSV} is to consider derived $t_i=3$-designs $C_1,C_2,\ldots,C_q$ in $H(n-1,q)$ from $C$ such that $s_{i,j}=|A(C_i,C_j)|=2$ for any $i,j\in\{1,2,\ldots,q\}$. 
Since $s_{i,j}+s_{j,h}-1= t_j$ hold for any $i,j,h\in\{1,2,\ldots,q\}$, $\bigcup_{i=1}^q C_i$ with the binary relations forms a $Q$-polynomial coherent configuration.     
\end{example}
%%%%%%%%%%%%%%%%%%%%%%%%%%%%%%%%%%%%%%%%%%%%%%%%%%%%%%%%%%%%%%%%%%%%%%%%%%%%%%%%
%%%%%%%%%%%%%%%%%%%%%%%%%%%%%%%%%%%%%%%%%%%%%%%%%%%%%%%%%%%%%%%%%%%%%%%%%%%%%%%%%%%%%%%%%%%%%%%%%%%%%%%%%%%%%%%%%%%%%%%%%%%%%%%%
\subsection{Spherical designs}
Let $X_1,X_2,\ldots,X_n$ be non-empty finite subsets of the unit sphere $S^{d-1}$ in $\mathbb{R}^d$ such that $X_i\cap X_j=\emptyset$ for $i,j\in\{1,2,\ldots,n\}$ and $X=\bigcup_{i=1}^n X_i$. 
We denote by $\langle x,y\rangle$ the standard inner product of $x, y \in \mathbb{R}^d$.
We define the angle set $A(X_i,X_j)$ between $X_i$ and $X_j$ by
$$A(X_i,X_j)=\{ \langle x, y\rangle \mid x \in X_i , y \in X_j , x \neq y \}.$$
Let $s_{i,j}=|A(X_i,X_j)|$ and 
 $A(X_i,X_j)=\{\alpha_1^{(i,j)},\alpha_2^{(i,j)},\ldots,\alpha_{s_{i,j}}^{(i,j)}\}$ and set $\alpha_0^{(i,i)}=1$.  
Define a subset $R_\ell^{(i,j)}$ of $X\times X$ by 
$$
R_\ell^{(i,j)}=\{(x,y)\in X\times X \mid x\in X_i,y\in X_j, \langle x,y\rangle=\alpha_\ell^{(i,j)} \}.
$$
Denote by $A_\ell^{(i,j)}$ the adjacency matrix of the graph $(X,R_\ell^{(i,j)})$. 

For a positive integer $t$, a non-empty finite set $Y$ in the unit sphere $S^{d-1}$ is called 
a spherical $t$-design in $S^{d-1}$ if the following condition is satisfied:
$$\frac{1}{|Y|}\sum\limits_{y \in Y}f(y)=\frac{1}{|S^{d-1}|}\int\nolimits_{S^{d-1}}f(y)d\sigma(y)$$
for all polynomials $f(x)=f(x_1,\dots,x_d)$ of degree not exceeding $t$.
Here $|S^{d-1}|$ denotes the volume of the sphere $S^{d-1}$.

We define the Gegenbauer polynomials $\{Q_k(x)\}_{k=0}^\infty$ on $S^{d-1}$ by
\begin{align*}
& Q_0(x)=1,\quad Q_1(x)=dx,\\
& \frac{k+1}{d+2k}Q_{k+1}(x)=xQ_k(x)-\frac{d+k-3}{d+2k-4}Q_{k-1}(x).
\end{align*}

Let $\mbox{Harm}(\mathbb{R}^d)$ be the vector space of the harmonic polynomials over $\mathbb{R}$
and $\mbox{Harm}_\ell(\mathbb{R}^d)$ be the subspace of $\mbox{Harm}(\mathbb{R}^d)$ consisting of homogeneous polynomials of total degree $\ell$.
Let $\{\phi_{\ell,1},\dots,\phi_{\ell,h_\ell}\}$ be an orthonormal basis
of $\text{Harm}_\ell(\mathbb{R}^d)$ with respect to the inner product 
$$\langle\phi,\psi \rangle=\frac{1}{|S^{d-1}|}\int\nolimits_{S^{d-1}}\phi(x)\psi(x)d\sigma(x) .$$
Then the addition formula for the Gegenbauer polynomial holds \cite[Theorem 3.3]{DGS}:
\begin{lemma}\label{add}
$\sum\limits_{i=1}^{h_\ell}\phi_{\ell,i}(x)\phi_{\ell,i}(y)=Q_\ell(\langle x,y\rangle)$ for any $\ell\in \mathbb{N}$, $x,y\in S^{d-1}$.
\end{lemma}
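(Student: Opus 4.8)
The plan is to recognise the left-hand side as the reproducing kernel of the space $\mathrm{Harm}_\ell(\mathbb{R}^d)$ inside $L^2(S^{d-1})$ with the normalized surface measure, to exploit its rotation invariance to reduce it to a one-variable polynomial $g_\ell$, and then to match $g_\ell$ against the three-term recurrence defining $Q_\ell$. First I would set $F_\ell(x,y)=\sum_{i=1}^{h_\ell}\phi_{\ell,i}(x)\phi_{\ell,i}(y)$ and observe that $F_\ell$ is independent of the chosen orthonormal basis: if $\{\psi_{\ell,i}\}$ is another orthonormal basis, then $\psi_{\ell,i}=\sum_j O_{ij}\phi_{\ell,j}$ with $O$ orthogonal, and $\sum_i\psi_{\ell,i}(x)\psi_{\ell,i}(y)=\sum_{j,k}\bigl(\sum_i O_{ij}O_{ik}\bigr)\phi_{\ell,j}(x)\phi_{\ell,k}(y)=\sum_j\phi_{\ell,j}(x)\phi_{\ell,j}(y)$, using $O^\top O=I$. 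Since any $\rho\in O(d)$ sends $\{\phi_{\ell,i}\}$ to another orthonormal basis $\{\phi_{\ell,i}\circ\rho\}$ of $\mathrm{Harm}_\ell(\mathbb{R}^d)$ (orthogonal maps preserve harmonicity, homogeneity, and the inner product), basis independence gives $F_\ell(\rho x,\rho y)=F_\ell(x,y)$ for all $\rho\in O(d)$ and all $x,y\in S^{d-1}$.

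Next I would use this invariance to write $F_\ell$ as a function of $\langle x,y\rangle$ alone. Given $x,y,x',y'\in S^{d-1}$ with $\langle x,y\rangle=\langle x',y'\rangle$, there is $\rho\in O(d)$ with $\rho x=x'$, $\rho y=y'$, so $F_\ell(x,y)=F_\ell(x',y')$; hence $F_\ell(x,y)=g_\ell(\langle x,y\rangle)$ for a well-defined $g_\ell$ on $[-1,1]$. Fixing a pole $p\in S^{d-1}$, the map $x\mapsto F_\ell(x,p)$ is the restriction to $S^{d-1}$ of a homogeneous harmonic polynomial of degree $\ell$ (a fixed linear combination of the $\phi_{\ell,i}$) and equals $g_\ell(\langle x,p\rangle)$, which forces $g_\ell$ to be a polynomial of degree exactly $\ell$, the zonal harmonic of degree $\ell$.

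Finally I would identify $g_\ell$ with $Q_\ell$. The zonal functions $g_\ell(\langle x,p\rangle)$ are the spherical-harmonic components of functions on $S^{d-1}$, so under the pushforward of the surface measure onto $[-1,1]$ — whose density is proportional to $(1-t^2)^{(d-3)/2}$ — the $g_\ell$ are mutually orthogonal polynomials with $\mathrm{deg}\,g_\ell=\ell$, i.e. the ultraspherical (Gegenbauer) polynomials, whose three-term recurrence is exactly the one displayed before the lemma once the normalization is fixed. To pin the normalization I would compute $g_\ell(1)=F_\ell(p,p)$: by invariance $F_\ell(x,x)$ is constant in $x$ and integrates against the normalized measure to $\sum_i\|\phi_{\ell,i}\|^2=h_\ell$, so $g_\ell(1)=h_\ell$; since $Q_\ell(1)=h_\ell$ follows from the recurrence (with $h_0=1$, $h_1=d$), the two degree-$\ell$ polynomials orthogonal for the same weight agree at $1$ and therefore coincide.

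The hard part will be the last step, making the passage ``zonal harmonic $=$ Gegenbauer polynomial'' fully rigorous. The cleanest self-contained route is to derive the three-term recurrence for $g_\ell$ directly from the decomposition $x_d\cdot\mathrm{Harm}_\ell(\mathbb{R}^d)\subset\mathrm{Harm}_{\ell+1}(\mathbb{R}^d)\oplus|x|^2\,\mathrm{Harm}_{\ell-1}(\mathbb{R}^d)$: applying the reproducing property to $x\mapsto x_d\,F_\ell(x,p)$ and comparing harmonic components of degrees $\ell+1$ and $\ell-1$ yields a relation of the form $\langle x,p\rangle\,g_\ell=(\,\cdot\,)g_{\ell+1}+(\,\cdot\,)g_{\ell-1}$. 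This computation is technical but routine, and tracking the dimensions $h_{\ell\pm1}$ reproduces precisely the coefficients $\tfrac{k+1}{d+2k}$ and $\tfrac{d+k-3}{d+2k-4}$ of the stated recurrence, completing the identification $g_\ell=Q_\ell$.
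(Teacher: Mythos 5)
The paper does not prove this lemma at all: it quotes it as \cite[Theorem 3.3]{DGS}, so there is no internal argument to compare yours against. What you propose is the standard proof of the addition formula (essentially the one given in \cite{DGS} and in the classical references on spherical harmonics), and it is correct: basis-independence of the kernel $F_\ell$ gives $O(d)$-invariance, transitivity of $O(d)$ on pairs of points with a fixed inner product gives zonality $F_\ell(x,y)=g_\ell(\langle x,y\rangle)$, the mutual orthogonality of the spaces $\mathrm{Harm}_\ell(\mathbb{R}^d)|_{S^{d-1}}$ in $L^2(S^{d-1})$ makes the $g_\ell$ orthogonal polynomials for the weight proportional to $(1-t^2)^{(d-3)/2}$, and the value $g_\ell(1)=h_\ell$ fixes the normalization. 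Two points would need to be written out to make this a complete proof. First, the claim that a function of the form $g(\langle x,p\rangle)$ extending to a homogeneous polynomial of degree $\ell$ forces $g$ to be a polynomial of degree at most $\ell$ requires averaging over the stabilizer of $p$ (the $O(d-1)$-invariant polynomials are generated by $x_d$ and $x_1^2+\cdots+x_{d-1}^2$), and ``degree exactly $\ell$'' then follows from orthogonality to $g_0,\dots,g_{\ell-1}$ together with $g_\ell(1)=h_\ell>0$. Second, the identification with the paper's $Q_\ell$, which is defined by a three-term recurrence rather than by orthogonality, needs both that the recurrence produces polynomials orthogonal for the same weight and that $Q_\ell(1)=h_\ell$; the latter amounts to the identity $\tfrac{k+1}{d+2k}h_{k+1}=h_k-\tfrac{d+k-3}{d+2k-4}h_{k-1}$ for $h_k=\binom{d+k-1}{k}-\binom{d+k-3}{k-2}$, which is a routine check. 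You flag this final step yourself, and your proposed route through $x_d\cdot\mathrm{Harm}_\ell(\mathbb{R}^d)\subset\mathrm{Harm}_{\ell+1}(\mathbb{R}^d)\oplus|x|^2\,\mathrm{Harm}_{\ell-1}(\mathbb{R}^d)$ does close it.
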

We define the $\ell$-th characteristic matrix of a non-empty finite set $Y\subset S^{d-1}$ as the $|Y|\times h_\ell$ matrix
$$ H_\ell=(\phi_{\ell,i}(x))_{\substack{x\in X\\1\leq i\leq h_\ell}} .$$
A criterion for $t$-designs using Gegenbauer polynomials and the characteristic matrices is known \cite[Theorem 5.3, 5.5]{DGS}.
\begin{lemma}\label{cha}
Let $Y$ be a non-empty finite set in $S^{d-1}$. The following conditions are equivalent:
\begin{enumerate}
\item $Y$ is a $t$-design,
\item $\sum\limits_{x,y\in Y}Q_k(\langle x,y\rangle)=0$ for any $k\in \{1,\ldots,t\}$,
\item $H_k^\top H_\ell=\delta_{k,\ell}|Y|I \text{ for } 0\leq k+\ell\leq t$.  
\end{enumerate}
\end{lemma}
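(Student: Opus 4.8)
The plan is to reduce all three conditions to statements about the two families of sums
\begin{equation*}
\sum_{x\in Y}\phi_{k,i}(x)\qquad\text{and}\qquad \sum_{x\in Y}\phi_{k,i}(x)\phi_{\ell,j}(x),
\end{equation*}
which are exactly the entries of the row vector $\mathbf{1}^\top H_k$ and of the matrix $H_k^\top H_\ell$, respectively. The one computational device I would use throughout is Lemma~\ref{add}: summing the addition formula over all pairs $(x,y)\in Y\times Y$ yields
\begin{equation*}
\sum_{x,y\in Y}Q_k(\langle x,y\rangle)=\sum_{i=1}^{h_k}\Bigl(\sum_{x\in Y}\phi_{k,i}(x)\Bigr)^{2}.
\end{equation*}
Thus the quantity in condition~(2) is a sum of squares; it is nonnegative, and for a fixed $k$ it vanishes if and only if $\sum_{x\in Y}\phi_{k,i}(x)=0$ for every $i$. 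Alongside this I would record the harmonic-analytic input to be used as a black box: the restriction to $S^{d-1}$ of the polynomials of degree at most $t$ equals $\bigoplus_{k=0}^{t}\mathrm{Harm}_k(\mathbb{R}^d)$, the spaces $\mathrm{Harm}_k$ are mutually orthogonal for the given inner product, and $\phi_{0,1}\equiv 1$, so that orthogonality forces $\int_{S^{d-1}}\phi_{k,i}\,d\sigma=0$ for every $k\geq 1$.

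For $(1)\Leftrightarrow(2)$ I would first argue that, because the $\phi_{k,i}$ with $0\leq k\leq t$ form a basis of the degree-$\leq t$ polynomials on the sphere, $Y$ is a $t$-design precisely when the defining averaging identity holds for each $\phi_{k,i}$ with $0\leq k\leq t$. For $k=0$ this is automatic, while for $1\leq k\leq t$ the right-hand integral vanishes and the identity collapses to $\sum_{x\in Y}\phi_{k,i}(x)=0$. Combining this with the summed addition formula above, $Y$ is a $t$-design if and only if $\sum_{x,y\in Y}Q_k(\langle x,y\rangle)=0$ for every $k\in\{1,\dots,t\}$, which is condition~(2).

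For $(1)\Rightarrow(3)$ I would apply the design condition to the product $\phi_{k,i}\phi_{\ell,j}$, a polynomial of degree $k+\ell\leq t$; the averaging identity then reads
\begin{equation*}
\frac{1}{|Y|}\sum_{x\in Y}\phi_{k,i}(x)\phi_{\ell,j}(x)=\langle\phi_{k,i},\phi_{\ell,j}\rangle=\delta_{k,\ell}\delta_{i,j},
\end{equation*}
which is exactly the $(i,j)$-entry of $\tfrac{1}{|Y|}H_k^\top H_\ell=\delta_{k,\ell}I$, namely condition~(3). For the converse I would specialise to $\ell=0$: since $\phi_{0,1}\equiv 1$ the column $H_0$ is the all-ones vector, so the $(i,1)$-entry of $H_k^\top H_0$ is $\sum_{x\in Y}\phi_{k,i}(x)$, and condition~(3) with $k+0\leq t$ forces this to vanish for all $1\leq k\leq t$; this returns us to the characterisation of $(1)$ obtained in the previous paragraph, closing the cycle.

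I expect the only genuine obstacle to be the harmonic-analysis input isolated in the first paragraph: that the polynomials of degree $\leq t$ restricted to the sphere are spanned by the $\phi_{k,i}$ with $k\leq t$, that distinct harmonic degrees are orthogonal, and that positive-degree harmonics integrate to zero. This spanning statement is precisely what licenses the reverse implications $(2)\Rightarrow(1)$ and $(3)\Rightarrow(1)$, since it lets one promote the design identity from the harmonic basis to \emph{all} degree-$\leq t$ polynomials. Once these facts are granted, every implication reduces to evaluating the averaging identity on a single $\phi_{k,i}$ or on a product $\phi_{k,i}\phi_{\ell,j}$ together with the orthonormality of the $\phi_{k,i}$, so the remainder is routine bookkeeping.
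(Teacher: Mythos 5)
Your proposal is correct; the paper itself offers no proof of this lemma, simply citing \cite[Theorems 5.3, 5.5]{DGS}, and your argument is essentially the standard one from that reference: reduce everything to the vanishing of $\sum_{x\in Y}\phi_{k,i}(x)$ via the summed addition formula (whose sum-of-squares form gives the equivalence with (2)) and to the orthonormality sums $\sum_{x\in Y}\phi_{k,i}(x)\phi_{\ell,j}(x)$ for (3), with the decomposition of degree-$\leq t$ polynomials on the sphere into harmonic components as the only nontrivial input. All three implications are handled soundly, including the converse $(3)\Rightarrow(1)$ via the $\ell=0$ specialization.
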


For mutually disjoint non-empty finite subsets $X_1,X_2,\ldots,X_n$ of $S^{d-1}$, after suitably rearranging the elements of $X=\bigcup_{i=1}^n X_i$, the $\ell$-th characteristic matrix $H_\ell$ of $X$ has the following form: 
$$
H_\ell=\begin{pmatrix}H_\ell^{(1)}\\ H_\ell^{(2)} \\ \vdots \\ H_\ell^{(n)}\end{pmatrix}=\sum_{i=1}^n e_i\otimes H_\ell^{(i)} 
$$
where $e_i$ denotes the column vector of length $n$ with a $1$ in the $i$-th coordinate and $0$'s elsewhere and  $H_\ell^{(i)}$ is the $\ell$-th characteristic matrix of $X_i$.  
Denote $\tilde{H}_\ell^{(i)}=e_i\otimes H_\ell^{(i)}$. 
For $i\in\{1,2,\ldots,n\}$, define $\Delta_{X_i}$ be the diagonal matrix indexed by the elements of $X$ with $(x,x)$-entry equal to $1$ if $x\in X_i$ and $0$ otherwise.

\begin{theorem}\label{sphere}
Let $X_i$ be a spherical $t_i$-design on $ S^{d-1}$ for $i \in \{1,2,\dots,n\}$.
Assume that $X_i \cap X_j=\emptyset$ for distinct integers $i ,j \in \{1,2,\dots,n\}$.
Let $s_{i,j}=|A(X_i,X_j)|$.
If one of the following holds depending on the choice of $i,j,h \in \{1, 2,\ldots ,n \}$;
\begin{enumerate}
\item $s_{i,j}+s_{j,h}-2 \leq t_j$,
\item $i=j=h$, $2s_{i,i}-3 = t_i$, and $X_i=-X_i$,
\end{enumerate} 
then $(\bigcup_{i=1}^n X_i, \{R_\ell^{(i,j)} \mid 1\leq i,j\leq n, \varepsilon_{i,j}\leq \ell \leq s_{i,j}\})$ is a $Q$-polynomial coherent configuration. 
\end{theorem}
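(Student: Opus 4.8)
The plan is to mirror the structure of the proof of Theorem~\ref{thm:Q-polyccDel}, replacing the idempotents $E_\alpha$ of the ambient scheme by the Gegenbauer/harmonic building blocks on the sphere. Concretely, for each degree $\ell$ I would introduce the matrix $G_\ell = H_\ell H_\ell^\top$, whose $(x,y)$-entry is $\sum_{i=1}^{h_\ell}\phi_{\ell,i}(x)\phi_{\ell,i}(y) = Q_\ell(\langle x,y\rangle)$ by the addition formula (Lemma~\ref{add}). Thus $G_\ell$ plays the role of $|X|E_\ell$: it is a symmetric positive semidefinite matrix expressible as a polynomial in the Gram matrix, and the block $\tilde{\Delta}_{X_i}G_\ell\tilde{\Delta}_{X_j}^\top$ will furnish the idempotent $E_\ell^{(i,j)}$ after normalization. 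The degree condition $s_{i,j}+s_{j,h}-2\le t_j$ controls how far these blocks interact, exactly as in the Delsarte case.

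First I would record the orthogonality relations: by Lemma~\ref{cha}(3), if $X_j$ is a $t_j$-design then $(H_\alpha^{(j)})^\top H_\beta^{(j)} = \delta_{\alpha,\beta}|X_j| I$ whenever $\alpha+\beta\le t_j$. I would then compute the product $G_\alpha \Delta_{X_j} G_\beta$ restricted to the fibers, i.e.\ $\tilde{\Delta}_{X_i}H_\alpha\,(H_\alpha^{(j)})^\top H_\beta^{(j)}\,H_\beta^\top\tilde{\Delta}_{X_h}^\top$, and use the orthogonality to collapse the middle factor to $\delta_{\alpha,\beta}|X_j|$ times the identity. The hypothesis $\alpha+\beta\le s_{i,j}+s_{j,h}-2\le t_j$ guarantees the relation applies for all relevant $\alpha,\beta$. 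This yields a multiplication rule of the form $\tilde{\Delta}_{X_i}G_\alpha\Delta_{X_j}G_\beta\tilde{\Delta}_{X_h}^\top = \delta_{\alpha,\beta}|X_j|\,\tilde{\Delta}_{X_i}G_\alpha\tilde{\Delta}_{X_h}^\top$. After defining $E_\ell^{(i,j)}$ as a suitable scalar multiple of $\tilde{\Delta}_{X_i}G_\ell\tilde{\Delta}_{X_j}^\top$, this is precisely the idempotent relation $E_\alpha^{(i,j)}E_\beta^{(i',j')}=\delta_{\alpha,\beta}\delta_{j,i'}E_\alpha^{(i,j')}$, so the span of the $E_\ell^{(i,j)}$ is closed under multiplication and equals the span of the $A_\ell^{(i,j)}$, identifying the algebra as a coherent configuration satisfying (B1)--(B4). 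The $Q$-polynomial conclusion then follows because the Krein parameters of this configuration are positive multiples of the linearization coefficients of the Gegenbauer polynomials, which satisfy the three-term recurrence, forcing $\hat{B}_1^{(i,j)}$ to be tridiagonal as in Proposition~\ref{prop:Q-poly}(3).

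The main obstacle I anticipate is the second alternative, case (2): when $i=j=h$, $2s_{i,i}-3=t_i$, and $X_i=-X_i$. Here the naive degree bound $2s_{i,i}-2\le t_i$ fails by one, so the orthogonality relation $(H_{s_{i,i}}^{(i)})^\top H_{s_{i,i}}^{(i)} = |X_i|I$ is \emph{not} available from Lemma~\ref{cha} alone. The central symmetry $X_i=-X_i$ must be exploited: antipodal symmetry forces $Q_\ell(\langle x,y\rangle)$ with $\ell$ of one parity to vanish in the relevant inner products, so the Gram matrix has only even or only odd degree harmonic components surviving, effectively recovering the missing orthogonality at the top degree. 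The delicate point is to verify that the highest-degree block $E_{s_{i,i}}^{(i,i)}$ is still idempotent despite $\alpha+\beta=2s_{i,i}=t_i+3$ exceeding $t_i$; I would argue that the parity constraint from $X_i=-X_i$ kills exactly the terms that the design property cannot handle, so the product relation survives. Once this single-fiber case is secured, the tridiagonality of $\hat{B}_1^{(i,i)}$ again follows from the Gegenbauer recurrence, completing the verification that Proposition~\ref{prop:Q-poly}(3) holds in all cases.
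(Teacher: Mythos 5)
Your case (1) strategy is essentially the paper's: the paper also builds $E_\ell^{(i,j)}$ as normalized blocks $\tilde{H}_\ell^{(i)}(\tilde{H}_\ell^{(j)})^\top$, uses the addition formula to place them in $\mathcal{A}^{(i,j)}$, and uses Lemma~\ref{cha}(3) to collapse the middle factor $(H_\alpha^{(j)})^\top H_\beta^{(j)}$ exactly as you describe. But there are two genuine gaps. First, your top idempotent on each diagonal fiber is wrong: you treat $E_{s_{i,i}}^{(i,i)}$ as a normalized harmonic block of degree $s_{i,i}$ (you invoke ``$(H_{s_{i,i}}^{(i)})^\top H_{s_{i,i}}^{(i)}=|X_i|I$''), but that orthogonality sits at degree $2s_{i,i}$, which exceeds $t_i$ \emph{even under hypothesis (1)} ($t_i\geq 2s_{i,i}-2$ only). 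The matrix $\tilde{\Delta}_{X_i}G_{s_{i,i}}\tilde{\Delta}_{X_i}^\top$ need not be idempotent and need not be orthogonal to the lower blocks. The paper instead defines $E_{s_{i,i}}^{(i,i)}=\Delta_{X_i}-\sum_{m=0}^{s_{i,i}-1}E_m^{(i,i)}$, and correspondingly its polynomial $v_{s_{i,i}}^{(i,i)}$ is not a Gegenbauer polynomial but involves the annihilator $F_{A(X_i)}(x)=\prod_{\alpha\in A(X_i)}\frac{x-\alpha}{1-\alpha}$; your claim that the Krein parameters are just Gegenbauer linearization coefficients also breaks at this boundary level and must be patched the same way.

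Second, your parity argument for case (2) does not close the gap. The missing orthogonality there is $(H_{s_{i,i}-1}^{(i)})^\top H_{s_{i,i}-1}^{(i)}$, which lives at degree $2s_{i,i}-2=t_i+1$; this degree is \emph{even} (regardless of the parity of $s_{i,i}$), whereas antipodality $X_i=-X_i$ only forces the vanishing of odd-degree harmonic moments $\sum_{x\in X_i}\phi(x)$. So the symmetry does not ``kill exactly the terms that the design property cannot handle.'' The actual resolution, taken from the Bannai--Bannai result the paper cites, is that in the antipodal case the relevant harmonic block is still a scalar multiple of a projection but with a \emph{different} scalar, which is why the paper introduces the nontrivial normalizing constant $c_\ell^{(i,j)}$ at one level when $t_i=2s_{i,i}-3$ and then delegates the idempotency verification to \cite{BB}. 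As written, your argument for case (2) would not go through.
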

\begin{proof}
In \cite{S}, it is shown that $(\bigcup \nolimits_{i=1}^n X_i, \{R_\ell^{(i,j)} \mid 1\leq i,j\leq n, \varepsilon_{i,j}\leq \ell \leq s_{i,j}\})$ is a coherent configuration.

For any $i,j\in\{1,2,\ldots,n\}$ and $\ell \in\{0,1,\ldots,s_{i,j}+\varepsilon_{i,j}\}$, we define
\begin{align*}
c_\ell^{(i,j)}&=\left\{\begin{array}{ll}
\frac{(s_{i,i}-2)!(d-1)!|X_i|-2(s_{i,i}-2)(d+s_{i,i}-4)!}{2d(d-1)(d+s_{i,i}-4)!} & \text{if}\ i=j \text{ and}\ t_i=2s_{i,i}-3, \ell=s_{i,i}-2, \\
 1  & \text{otherwise},
 \end{array}\right.  \\
E_\ell^{(i,j)}&=\left\{\begin{array}{ll}
\frac{c_\ell^{(i,j)}}{\sqrt{|X_i||X_j|}}\tilde{H}_\ell^{(i)}(\tilde{H}_\ell^{(j)})^\top & \text{ if } \ell\leq s_{i,j}-1,\\
\Delta_{X_i}-\frac{1}{|X_i|}\sum\limits_{m=0}^{s_{i,i}-1}c_m^{(i,j)}\tilde{H}_m^{(i)}(\tilde{H}_m^{(j)})^\top & \text{ if }\ i=j \text{ and } \ell=s_{i,i}.
 \end{array}\right.
\end{align*}
Thus (B1) and (B3) hold.  
Note that $E_{s_{i,i}}^{(i,i)}=\Delta_{X_i}-\sum\limits_{m=0}^{s_{i,i}-1}E_m^{(i,i)}$.

For $x\in X_i$, $y\in X_j$,
\begin{align*}
E_\ell^{(i,j)}(x,y)&=\left\{\begin{array}{ll}
\frac{c_\ell^{(i,j)}}{\sqrt{|X_i||X_j|}}\phi_\ell(x){\phi_\ell(y)}^\top & \text{ if } \ell\leq s_{i,j}-1,\\
\delta_{x,y}-\frac{1}{|X_i|}\sum\limits_{m=0}^{s_{i,i}-1}c_m^{(i,j)}\phi_m(x){\phi_m(y)}^\top & \text{ if } i=j\ \text{ and } \ell=s_{i,j},
 \end{array}\right. \\
 &=\left\{\begin{array}{ll}
\frac{c_\ell^{(i,j)}}{\sqrt{|X_i||X_j|}}Q_\ell(\langle x,y\rangle) & \text{ if } \ell\leq s_{i,j}-1,\\
\delta_{x,y}-\frac{1}{|X_i|}\sum\limits_{m=0}^{s_{i,i}-1}c_m^{(i,j)}Q_m(\langle x,y\rangle) & \text{ if } i=j \text{ and } \ell=s_{i,i}.
 \end{array}\right. 
\end{align*}
Therefore 
\begin{align}
E_\ell^{(i,j)}=\left\{\begin{array}{ll}
\frac{c_\ell^{(i,j)}}{\sqrt{|X_i||X_j|}}\sum\limits_{k=\varepsilon_{i,j}}^{s_{i,j}}Q_\ell(\alpha_{i,j}^k)A_k^{(i,j)} & \text{ if } \ell\leq s_{i,i}-1,\\
\Delta_{X_i}-\frac{1}{|X_i|}\sum\limits_{k=\varepsilon_{i,j}}^{s_{i,i}}\biggl(\sum\limits_{m=0}^{s_{i,i}-1}c_m^{(i,i)}Q_m(\alpha_{i,i}^k)\biggr)A_k^{(i,i)} & \text{ if } i=j \text{ and } \ell=s_{i,i}.
\end{array}\right. \label{eq1}
\end{align}
This implies that $E_\ell^{(i,j)} \in \mathcal{A}^{(i,j)}$. 

We show that $E_{\alpha}^{(i,j)}E_{\beta}^{(i',j')}=\delta_{\alpha,\beta}\delta_{j,i'}E_{\alpha}^{(i,j')}$. 
If $j\neq i'$, then by $e_j^\top e_{i'}=0$,
\begin{align*}
E_{\alpha}^{(i,j)}E_{\beta}^{(i',j')}&=\frac{1}{\sqrt{|X_i||X_j||X_{i'}||X_{j'}|}}\tilde{H}_{\alpha}^{(i)}(\tilde{H}_{\alpha}^{(j)})^\top \tilde{H}_{\beta}^{(i')}(\tilde{H}_{\beta}^{(j')})^\top\displaybreak[0]\\
&=\frac{1}{\sqrt{|X_i||X_j||X_{i'}||X_{j'}|}}\tilde{H}_{\alpha}^{(i)}\left((e_j\otimes H_{\alpha}^{(j)})^\top (e_{i'}\otimes H_{\beta}^{(i')})\right)(\tilde{H}_{\beta}^{(j')})^\top\displaybreak[0]\\
&=\frac{1}{\sqrt{|X_i||X_j||X_{i'}||X_{j'}|}}\tilde{H}_{\alpha}^{(i)}\left(e_j^\top e_{i'}\otimes (H_{\alpha}^{(j)})^\top H_{\beta}^{(i')}\right)(\tilde{H}_{\beta}^{(j')})^\top\displaybreak[0]\\
&=0.
\end{align*} 
In the following, we assume $j=i'$ and then show that $E_{\alpha}^{(i,j)}E_{\beta}^{(j,h)}=\delta_{\alpha,\beta}E_{\alpha}^{(i,h)}$. 

\begin{enumerate}
\item The case where $i,j,h$ satisfying $s_{i,j}+s_{j,h}-2\leq t_j$.  
In order to prove $E_{\alpha}^{(i,j)}E_{\beta}^{(j,h)}=\delta_{\alpha,\beta}E_{\alpha}^{(i,h)}$ for $\alpha\in\{0,1,\ldots,s_{i,j}-\varepsilon_{i,j}\},\beta\in\{0,1,\ldots,s_{j,h}-\varepsilon_{j,h}\}$,  the case is divided into the following cases. 
\begin{enumerate}
\item For $\alpha,\beta$ satisfying $\alpha\leq s_{i,j}-1$, $\beta\leq s_{j,h}-1$, 
\begin{align*}
E_{\alpha}^{(i,j)}E_{\beta}^{(j,h)}&=\frac{1}{\sqrt{|X_i||X_h|}|X_j|}\tilde{H}_{\alpha}^{(i)}(\tilde{H}_{\alpha}^{(j)})^\top \tilde{H}_{\beta}^{(j)}(\tilde{H}_{\beta}^{(h)})^\top\displaybreak[0]\\
&=\frac{\delta_{\alpha,\beta}}{\sqrt{|X_i||X_h|}}\tilde{H}_{\alpha}^{(i)}(\tilde{H}_{\alpha}^{(h)})^\top\displaybreak[0]\\
&=\delta_{\alpha,\beta}E_{\alpha}^{(i,h)}.
\end{align*}

\item When $i=j$, for $\alpha,\beta$ satisfying $\alpha=s_{i,i}$, $\beta\leq s_{i,h}-1$, $s_{i,h}\leq s_{i,i}+1$ holds by \cite[p.227]{Hig1}. Then $s_{i,i}-1< \beta$ hold if and only if $\beta=s_{i,h}-1=s_{i,i}$, and   
\begin{align*}
E_{s_{i,i}}^{(i,i)}E_{\beta}^{(i,h)}&=\left(\Delta_{X_i}-\sum\limits_{m=0}^{s_{i,i}-1}E_m^{(i,i)}\right) E_{\beta}^{(i,h)} \displaybreak[0]\\
&=E_{\beta}^{(i,h)}-\sum\limits_{m=0}^{s_{i,i}-1}E_m^{(i,i)}E_{\beta}^{(i,h)} \displaybreak[0]\\
&=\begin{cases}E_{\beta}^{(i,h)}-E_{\beta}^{(i,h)} & \text{ if } \beta\leq s_{i,i}-1\\
E_{\beta}^{(i,h)} & \text{ if } \beta=s_{i,h}-1= s_{i,i}
\end{cases}\\
&=\delta_{s_{i,i},\beta}E_{s_{i,i}}^{(i,h)}. 
\end{align*}
A similar is true for $j=k$ and $\alpha\leq s_{i,j}-1,\beta=s_{j,j}$. 

\item When $i=j=h$, for $\alpha=\beta= s_{i,i}$, 
\begin{align*}
E_{s_{i,i}}^{(i,i)}E_{s_{i,i}}^{(i,i)}&=\left(\Delta_{X_i}-\sum\limits_{\ell=0}^{s_{i,i}-1}E_\ell^{(i,i)}\right) \left(\Delta_{X_i}-\sum\limits_{m=0}^{s_{i,i}-1}E_m^{(i,i)}\right) \displaybreak[0]\\
&=\Delta_{X_i}-2\sum\limits_{\ell=0}^{s_{i,i}-1}E_m^{(i,i)}+\sum\limits_{\ell,m=0}^{s_{i,i}-1}E_\ell^{(i,i)}E_m^{(i,i)} \displaybreak[0]\\
&=\Delta_{X_i}-2\sum\limits_{\ell=0}^{s_{i,i}-1}E_m^{(i,i)}+\sum\limits_{\ell=0}^{s_{i,i}-1}E_\ell^{(i,i)} \displaybreak[0]\\
&=E_{\beta}^{(i,k)}-E_{\beta}^{(i,k)}\\
&=E_{s_{i,i}}^{(i,i)}. 
\end{align*}
\end{enumerate}
\item The case where $i,j,h$ satisfying $i=j=h$, $t_j=s_{i,i}+s_{i,i}-3$ and $X_i=-X_i$.  
It was shown in \cite{BB} that $E_{\alpha}^{(i,i)}E_{\beta}^{(i,i)}=\delta_{\alpha,\beta}E_{\alpha}^{(i,i)}$ for $\alpha,\beta\in\{0,1,\ldots,s_{i,i}\}$.
\end{enumerate} 
This proves that (B4) holds. 

Next we show (B2). Fix $i,j\in\{1,2,\ldots,n\}$.  
For $d_{\ell}^{(i,j)}\in \mathbb{C}$, let $\sum\limits_{i,j=1}^n\sum\limits_{\ell=0}^{s_{i,j}-\varepsilon_{i,j}} d_{\ell}^{(i,j)}E_{\ell}^{(i,j)}=0$.
Multiplying $E_m^{(i,i)}$ on the left side and $E_m^{(j,j)}$ on the right side, 
we obtain $d_{m}^{(i,j)}E_{m}^{(i,j)}=0$ and  thus $d_{m}^{(i,j)}=0$ for any $m$. 
Therefore $\{E_\ell^{(i,j)} \mid 0\leq \ell \leq s_{i,j}-\varepsilon_{i,j}\}$ is a linear independent set over $\mathbb{C}$.
Since $\dim\mathcal{A}^{(i,j)}=s_{i,j}+\delta_{i,j}$, $\{E_\ell^{(i,j)} \mid 0\leq \ell \leq s_{i,j}-\varepsilon_{i,j}\}$ is a basis of $\mathcal{A}^{(i,j)}$.  
This proves (B2).

It remains to prove $Q$-polynomiality.
Setting 
\begin{align*}
v_\ell^{(i,j)}(x)=\left\{\begin{array}{ll}c_\ell^{(i,j)}Q_\ell(\frac{x}{d}) & \text{ if } \ell\leq s_{i,i}-1,\\
|X_i|F_{A(X_i)}(\frac{x}{d})-\sum\limits_{m=0}^{s_{i,i}-1}c_m^{(i,i)}Q_m(\frac{x}{d}) & \text{ if } i=j\ \text{ and } \ell=s_{i,i},
\end{array}\right. 
\end{align*}
where $F_{A(X_i)}(x)=\prod_{\alpha\in A(X_i)}\frac{x-\alpha}{1-\alpha}$ and $A(X_i)=A(X_i,X_i)$. 
Equation \eqref{eq1} implies that for $i,j\in\{1,2,\ldots,n\},\ell\in \{0,1,\ldots, s_{i,j}-\varepsilon_{i,j}\}$,
$$\sqrt{|X_i||X_j|}E_\ell^{(i,j)}=v_\ell^{(i,j)}(\sqrt{|X_i||X_j|}E_1^{(i,j)})$$
under the entry-wise product, which implies that Proposition~\ref{prop:Q-poly}(1) is satisfied. 
This completes the proof.   
\end{proof}
\begin{example}
Let $X$ be a non-empty finite set in $S^{d-1}$ with the angle set $A(X)=\{\langle x,y\rangle \mid x,y\in X,x\neq y\}=\{\alpha_1,\alpha_2,\ldots,\alpha_s\}$ where $\alpha_1>\cdots>\alpha_s$.   
After suitably transforming the set $X$, we may assume that $e_1\in X$.   
For $i\in\{1,2,\ldots,s\}$ such that $\alpha_i\neq -1$, the derived code $X_i$ with respect to $e_1$ is defined to be 
$$
X_i=\{x \in S^{d-2} \mid (\alpha_i, \sqrt{1-\alpha_i^2}x)\in X\}.
$$ 
Suppose that $X$ be a spherical $t$-design in $S^{d-1}$ and let $s^*=|A(X)\setminus \{-1\}|$. 
For $i,j\in\{1,\ldots,s^*\}$, the angle set between $X_i$ and $X_j$ satisfies 
$$
A(X_i,X_j)\subset\left\{\tfrac{\alpha_h-\alpha_i \alpha_j}{\sqrt{(1-\alpha_i^2)(1-\alpha_j^2)}} \mid 1\leq h\leq s\right\}.
$$
Therefore $s_{i,j}~=|A(X_i,X_j)|$ satisfies that $s_{i,j}\leq s$. 

It is shown in \cite[Theorem~8.2]{DGS} that if $t+1\geq s^*$, then $X_i$ is a spherical $(t+1-s^*)$-design in $S^{d-2}$. 
This design is said to be the derived design.

Let $X\subset S^{d-1}$ be a spherical tight $4$-, $5$-, $7$-design with $s^*=|A(X)\setminus\{-1\}|$ and
$X_i$ be a derived design in $S^{d-1}$ for $i\in\{1,\ldots,s^*\}$. 
Since $s_{i,j}+s_{j,h}-2\leq t_j$ holds for $i,j,h\in\{1,\ldots,s^*\}$ in each case,
Theorem~\ref{sphere} implies that  $(\bigcup \nolimits_{i=1}^{s^*} X_i, \{R_\ell^{(i,j)} \mid 1\leq i,j\leq n, \varepsilon_{i,j}\leq \ell\leq s_{i,j}\})$ is a $Q$-polynomial coherent configuration. 

The second eigenmatrices of the $Q$-polynomial coherent configuration obtained from tight $4$-design are given as follows:
\begin{align*}
Q^{(1,1)}&=\begin{pmatrix}
1&d-1& \frac{(d-2)(d+\sqrt{d+3}+1)}{4}\\
1&\sqrt{d+3}-2&-\sqrt{d+3}+1\\
1&-\frac{d+\sqrt{d+3}-3}{\sqrt{d+3}-1}&\frac{d-2}{\sqrt{d+3}-1}
\end{pmatrix},
Q^{(1,2)}=Q^{(2,1)}=
\begin{pmatrix}
1&\sqrt{d-1} \\
1&-\sqrt{d-1}
\end{pmatrix},\\
Q^{(2,2)}&=\begin{pmatrix}
1&d-1&\frac{(d-2)(d-\sqrt{d+3}+1)}{4} \\
1&\frac{d-\sqrt{d+3}-3}{\sqrt{d+3}+1}&\frac{-d+2}{\sqrt{d+3}+1}\\
1&-\sqrt{d+3}-2&\sqrt{d+3}+1
\end{pmatrix}.
\end{align*}

The second eigenmatrices of the $Q$-polynomial coherent configuration obtained from tight $5$-design are given as follows:
\begin{align*}
Q^{(i,j)}&=\begin{pmatrix}
1&d-1&\frac{1}{2}(d-2)(d+1)\\
1&\frac{(d-1)(\sqrt{d+2}-1)}{d+1}&-\frac{(d-1)\sqrt{d+2}+2}{d+1}\\
1&-\frac{(d-1)(\sqrt{d+2}+1)}{d+1}&\frac{(d-1)\sqrt{d+2}-2}{d+1}
\end{pmatrix} \text{ for } i=j\in\{1,2\},\\
Q^{(i,j)}&=\begin{pmatrix}
1&-\frac{(d-1)(\sqrt{d+2}+1)}{d+1}&\frac{(d-1)\sqrt{d+2}-2}{d+1}\\
1&\frac{(d-1)(\sqrt{d+2}-1)}{d+1}&-\frac{(d-1)\sqrt{d+2}+2}{d+1}\\
1&-d+1&\frac{1}{2}(d-2)(d+1)
\end{pmatrix} \text{ for } i\neq j\in\{1,2\}.
\end{align*}

The second eigenmatrices of the $Q$-polynomial coherent configuration obtained from tight $7$-design are given as follows:
\begin{align*}
Q^{(i,j)}&=\begin{pmatrix}
1&d-1&\frac{(d-2)(d+1)}{2}&\frac{(d-2)(d-1)(d+1)}{18}\\
1&\frac{(d-1)(\sqrt{3(d+4)}-3)}{d+1}&\frac{(\sqrt{3(d+4)}-3)(d-5-\sqrt{3(d+4)})}{\sqrt{3(d+4)}+3}&-\frac{(d-1)(\sqrt{3(d+4)}-3)^2}{3(d+1)}\\
1&\frac{3(d-1)}{d+1}&-\frac{(d-5)(d-2)}{2(d+1)}&\frac{(d-2)(d-1)}{2(d+1)}\\
1&-\frac{(d-1)(\sqrt{3(d+4)}+3)}{d+1}&\frac{(\sqrt{3(d+4)}-3)(d-5-\sqrt{3(d+4)})}{\sqrt{3(d+4)}+3}&-\frac{(d-1)(\sqrt{3(d+4)}+3)^2}{3(d+1)}
\end{pmatrix} \text{ for } i=j\in \{1,3\},\\
Q^{(i,j)}&=\begin{pmatrix}
1&\frac{(d-1)(\sqrt{3(d+4)}+3)}{d+1}&\frac{3(\sqrt{3(d+4)}+3)(d-5+\sqrt{3(d+4)})}{d+1}&-\frac{(d-1)(\sqrt{3(d+4)}+3)^2}{3(d+1)}\\
1&-\frac{3(d-1)}{d+1}&-\frac{(d-5)(d-2)}{2(d+1)}&\frac{(d-2)(d-1)}{2(d+1)}\\
1&-\frac{(d-1)(\sqrt{3(d+4)}-3)}{d+1}&\frac{3(\sqrt{3(d+4)}-3)(d-5-\sqrt{3(d+4)})}{d+1}&-\frac{(d-1)(\sqrt{3(d+4)}-3)^2}{3(d+1)}\\
1&-d+1&\frac{(d-2)(d+1)}{2}&\frac{(d-2)(d-1)(d+1)}{18}
\end{pmatrix} \text{ for } i\neq j\in \{1,3\},\displaybreak[0]\\
Q^{(2,2)}&=\begin{pmatrix}
1&d-1&\frac{(d-2)(d+1)}{2}&\frac{(d-2)(d-1)(d+4)}{9}&\frac{(d-2)(d-1)(2d-1)}{18}\\
1&\frac{\sqrt{3}(d-1)}{\sqrt{d+4}}&\frac{(2d-7)(d+1)}{2(d+4)}&-\frac{\sqrt{3}(d-1)}{\sqrt{d+4}}&-\frac{(2d-1)(d-1)}{2(d+4)}\\
1&0&-\frac{1}{2}(d+1)&0&\frac{1}{2}(d-1)\\
1&-\frac{\sqrt{3}(d-1)}{\sqrt{d+4}}&\frac{(2d-7)(d+1)}{2(d+4)}&\frac{\sqrt{3}(d-1)}{\sqrt{d+4}}&-\frac{(2d-1)(d-1)}{2(d+4)}\\
1&-d+1&\frac{(d-2)(d+1)}{2}&-\frac{(d-2)(d-1)(d+4)}{9}&\frac{(d-2)(d-1)(2d-1)}{18}
\end{pmatrix},\displaybreak[0]\\
Q^{(i,j)}&=\begin{pmatrix}
1&\frac{\sqrt{3}(d-1)}{\sqrt{d+1}}&d-2\\
1&0&-\frac{d+1}{2}\\
1&-\frac{\sqrt{3}(d-1)}{\sqrt{d+1}}&d-2
\end{pmatrix} \text{ for } (i,j)\in\{(1,2),(3,2),(2,1),(2,3)\}.
\end{align*}
\end{example}

Let $M=\{M_i\}_{i=1}^f$ be a collection of orthonormal bases of $\mathbb{R}^d$.
The set $M$ is called  real mutually unbiased bases (MUB) if any two vectors $x$ and $y$ from different bases
satisfy $\langle x,y \rangle =\pm 1/\sqrt{d}$.
It is known that the number $f$ of real mutually unbiased bases in $\mathbb{R}^d$ can be at most $d/2+1$.  
We call $M$ a maximal MUB if this upper bound is attained. 

The assumption of Theorem~\ref{sphere} is not satisfied for a union of derived codes of maximal MUB, 
but the same conclusion holds.
\begin{theorem}
Let $M_1,M_2,\ldots,M_{d/2+1}$ be a maximal MUB of $\mathbb{R}^d$, 
$X=\bigcup_{i=1}^{d/2+1}(X_i\cup(-X_i))$
and $X_i$ be the derived design of $X$ relative to a point in $X$ for $i\in\{1,2,3\}$.
Then $\mathcal{C}=(\bigcup\nolimits_{i=1}^3 X_i, \{R_\ell^{(i,j)} \mid 1\leq i,j\leq n, \varepsilon_{i,j}\leq \ell\leq s_{i,j}\})$ is a $Q$-polynomial coherent configuration.
\begin{proof}
In \cite{S}, it is shown that $\mathcal{C}$ is a coherent configuration. 
In what follows, we construct a basis $E_\ell^{(i,j)}$ which has  a $Q$-polynomial property. 
 
The type of $\mathcal{C}$ is 
$$(s_{i,j}+\delta_{i,j})_{i,j=1}^3=\left(\begin{array}{ccc} 4&2&4\\2&3&2\\4&2&4\end{array}\right).$$
We define $E_\ell^{(i,j)}$ for $i,j\in\{1,2,3\}, 0\leq \ell \leq s_{i,j}-\varepsilon_{i,j}$ as follows.
\begin{itemize}
\item For $i,j\in\{1,2,3\}$, $\ell\in\{0,1\}$,  $E_\ell^{(i,j)}=\frac{1}{\sqrt{|X_i||X_j|}}\tilde{H}_\ell^{(i)}(\tilde{H}_\ell^{(j)})^\top$.
\item For $i,j\in\{1,3\}$, $E_2^{(i,j)}=\frac{d+1}{(d-1)\sqrt{|X_i||X_j|}}\tilde{H}_2^{(i)}(\tilde{H}_2^{(j)})^\top$. 
\item For $i\in\{1,2,3\}$, $E_{s_{i,i}}^{(i,i)}=\Delta_{|X_i|}-\sum\limits_{k=0}^{s_{i,i}-1}E_k^{(i,i)}$.
\item For $\{i,j\}=\{1,3\}$, $E_{3}^{(i,j)}=A_4^{(i,j)}-\sum\limits_{k=0}^2E_k^{(i,j)}$, where $A_4^{(i,j)}$ be the adjacency matrix defined by inner product $-1$ between $X_i$ and $X_j$.
\end{itemize}

It is clear that for $i,j,i',j'\in\{1,2,3\}$ with $j\neq i'$, $\ell\in\{0,1,\ldots,s_{i,j}-\varepsilon_{i,j}\},m\in\{0,1,\ldots,s_{i',j'}-\varepsilon_{i',j'}\}$, $E_\ell^{(i,j)}E_m^{(i',j')}=0$. 
Therefore we will show that for $i,j,h\in\{1,2,3\}$ and $\ell\in\{0,1,\ldots,s_{i,j}-\varepsilon_{i,j}\},m\in\{0,1,\ldots,s_{j,h}-\varepsilon_{j,h}\}$, $E_\ell^{(i,j)}E_m^{(j,h)}=\delta_{\ell,m}E_\ell^{(i,h)}$. 

It is shown in \cite{ABS} that $X_1$ and $X_3$ are $Q$-polynomial association schemes, and they are isomorphic.
The polynomial $v_2(x)$ of degree $2$ which is determined from $Q$-polynomiality of $X_1$ and $X_3$ is $\frac{d+1}{d-1}Q_2(\frac{x}{d-1})$, 
so $E_2^{(i,i)}=\frac{d+1}{(d-1)|X_i|}\tilde{H}_2^{(i)}(\tilde{H}_2^{(i)})^\top$ is a primitive idempotent for $i=1,3$.
Then we have $A_4^{(i,j)}\tilde{H}_k^{(j)}=\tilde{H}_k^{(i)}$.
Therefore $E_\ell^{(i,i)}E_m^{(i,i)}=\delta_{\ell,m}E_\ell^{(i,i)}$ for $i\in\{1,3\}$ if and only if $E_\ell^{(i,j)}E_m^{(j,h)}=\delta_{\ell,m}E_\ell^{(i,h)}$ for $i,j,h\in\{1,3\}$. 
This completes the proof.  
\end{proof}
\end{theorem}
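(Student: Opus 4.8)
The plan is to take the coherent-configuration structure of $\mathcal{C}$ as given (from \cite{S}) and to prove $Q$-polynomiality by producing an explicit basis $\{E_\ell^{(i,j)}\}$ of the coherent algebra that satisfies (B1)--(B4) and verifies condition~(1) of Proposition~\ref{prop:Q-poly}. The reason Theorem~\ref{sphere} cannot be invoked directly is a strength deficit: the inequality $s_{i,j}+s_{j,h}-2\le t_j$ fails for $i=j=h\in\{1,3\}$ (there $2s_{i,i}-2$ exceeds the relevant design strength), so the rank-one blocks $\tilde{H}_\ell^{(i)}(\tilde{H}_\ell^{(j)})^\top$ cannot all be normalized by $1/\sqrt{|X_i||X_j|}$ and still be idempotent. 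I would therefore build the basis by hand, using the corrected scalar $(d+1)/(d-1)$ on the degree-two block of the large fibers indexed by $\{1,3\}$.

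First I would record the angle structure: since $X=-X$, the off-diagonal inner products of $X$ are $0$ and $\pm1/\sqrt{d}$ together with the antipodal value $-1$, and deriving relative to a point produces three fibers whose type $(s_{i,j}+\delta_{i,j})_{i,j=1}^3$ is as displayed, with $X_3=-X_1$. I would then set $E_\ell^{(i,j)}=\frac{1}{\sqrt{|X_i||X_j|}}\tilde{H}_\ell^{(i)}(\tilde{H}_\ell^{(j)})^\top$ for $\ell\in\{0,1\}$, use the corrected block $E_2^{(i,j)}=\frac{d+1}{(d-1)\sqrt{|X_i||X_j|}}\tilde{H}_2^{(i)}(\tilde{H}_2^{(j)})^\top$ on $\{i,j\}=\{1,3\}$, define the top diagonal idempotents $E_{s_{i,i}}^{(i,i)}=\Delta_{X_i}-\sum_{k<s_{i,i}}E_k^{(i,i)}$ by subtraction, and define the top off-diagonal idempotent $E_3^{(i,j)}=A_4^{(i,j)}-\sum_{k\le2}E_k^{(i,j)}$ on $\{i,j\}=\{1,3\}$, where $A_4^{(i,j)}$ is the adjacency matrix of the inner-product-$(-1)$ relation. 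Properties (B1) and (B3) are immediate from these definitions, and (B2) will follow from (B4) by left/right multiplication with the diagonal idempotents, exactly as in the proof of Theorem~\ref{sphere}.

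The core is (B4). For $j\neq i'$ the product vanishes because the Kronecker factor contributes $e_j^\top e_{i'}=0$, just as in Theorem~\ref{sphere}, so I would concentrate on $E_\ell^{(i,j)}E_m^{(j,h)}=\delta_{\ell,m}E_\ell^{(i,h)}$ with $j=i'$. The idea is to reduce the three-fiber products on $\{1,3\}$ to the single-fiber idempotency relations of the association scheme $X_1$, which by \cite{ABS} is $Q$-polynomial and isomorphic to $X_3$. The mechanism is the intertwining $A_4^{(i,j)}\tilde{H}_k^{(j)}=\tilde{H}_k^{(i)}$, encoding the antipodal identification $X_3=-X_1$ through the addition formula of Lemma~\ref{add}; it transports the diagonal idempotents $E_k^{(i,i)}$ onto the off-diagonal blocks $E_k^{(i,j)}$ and collapses every product $E_\ell^{(i,j)}E_m^{(j,h)}$ onto the on-diagonal relation $E_\ell^{(i,i)}E_m^{(i,i)}=\delta_{\ell,m}E_\ell^{(i,i)}$ supplied by \cite{ABS}. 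This is also the step that pins down the scalar $(d+1)/(d-1)$: it is nothing but the normalization appearing in the degree-two polynomial $v_2(x)=\frac{d+1}{d-1}Q_2\!\bigl(\frac{x}{d-1}\bigr)$ furnished by the $Q$-polynomial structure of $X_1$, and it is this choice that makes the corrected block genuinely idempotent despite the strength deficit. The blocks touching the small fiber $X_2$ involve only $\ell,m\le1$ and are settled by the degree-$\le2$ Gram relations $(\tilde{H}_\ell^{(2)})^\top\tilde{H}_m^{(2)}=\delta_{\ell,m}|X_2|I$, valid because $X_2$ is a spherical design of strength at least $2$.

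Finally I would verify Proposition~\ref{prop:Q-poly}(1) by exhibiting, for each $(i,j)$, polynomials $v_\ell^{(i,j)}$ of degree $\ell$ with $\sqrt{|X_i||X_j|}\,E_\ell^{(i,j)}=v_\ell^{(i,j)}\bigl(\sqrt{|X_i||X_j|}\,E_1^{(i,j)}\bigr)$ under the entry-wise product. For $\ell\le s_{i,j}-1$ the entries of $\sqrt{|X_i||X_j|}\,E_\ell^{(i,j)}$ are a scalar times the Gegenbauer evaluation $Q_\ell(\langle x,y\rangle)$ on $S^{d-2}$, while those of $\sqrt{|X_i||X_j|}\,E_1^{(i,j)}$ are $Q_1(\langle x,y\rangle)=(d-1)\langle x,y\rangle$; hence the three-term Gegenbauer recurrence for $S^{d-2}$ yields the $v_\ell^{(i,j)}$ directly, and the top idempotent is the complementary polynomial of degree $s_{i,i}$. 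I expect the main obstacle to be precisely the idempotency of the corrected block $E_2^{(1,3)}$: this is the one place where the strength deficit of the derived design bites, and it cannot be resolved by the generic design estimate of Theorem~\ref{sphere} but only by importing the explicit $Q$-polynomial structure of $X_1\cong X_3$ from \cite{ABS}.
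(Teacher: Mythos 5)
Your proposal follows essentially the same route as the paper's proof: the identical basis with the corrected scalar $(d+1)/(d-1)$ on the degree-two block of the fibers indexed by $\{1,3\}$, the same top idempotents defined by subtraction, the same intertwining relation $A_4^{(i,j)}\tilde{H}_k^{(j)}=\tilde{H}_k^{(i)}$ reducing the off-diagonal products to the single-fiber idempotency supplied by \cite{ABS}, and the same identification of the scalar with the polynomial $v_2(x)=\frac{d+1}{d-1}Q_2(\frac{x}{d-1})$. The extra detail you supply on the blocks touching $X_2$ and on verifying Proposition~\ref{prop:Q-poly}(1) explicitly only fills in steps the paper leaves implicit, so the argument is correct and matches the paper's.
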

The second eigenmatrices of $Q$-polynomial coherent configuration obtained from MUB are given as follows:
\begin{align*}
Q^{(i,j)}&=\begin{pmatrix}
1&d-1&\frac{1}{2}(d-2)(d-1)&\frac{d}{2}-1\\
1&\sqrt{d}+1&\sqrt{d}-1&-1\\
1&-1&-\frac{d}{2}+1&\frac{d}{2}-1\\
1&-\sqrt{d}+1&-\sqrt{d}-1&-1
\end{pmatrix}\text{ for }i,j\in \{1,3\},\\
Q^{(2,2)}&=\begin{pmatrix}
1&d-1&d-2\\
1&0&-1\\
1&-d+1&d-2
\end{pmatrix},\\
Q^{(i,j)}&=\begin{pmatrix}
1&\sqrt{d-1}\\
1&-\sqrt{d-1}
\end{pmatrix} \text{ for } (i,j)\in\{(1,2),(3,2),(2,1),(2,3)\}.
\end{align*}

\section{The Terwillger algebra of $H(n,2)$ and tight relative $2e$-designs on two shells}\label{sec:hn2}
\subsection{The Terwillger algebra of $H(n,2)$}
The Terwilliger algebra \cite{T1} of the binary Hamming schemes $H(n,2)=(X,\{R_i\}_{i=0}^n)$ is a coherent configuration because the scheme $H(n,2)$ is triply regular, that is, for $(x,y)\in R_i,(y,z)\in R_j,(z,x)\in R_h$, the number $|\{w\in X \mid (x,w),\in R_{j'},(y,w),\in R_{h'},(z,w),\in R_{i'}\}|$ depends only on $i,j,h,i',j',h'$, not on the choice of $x,y,z$. 
We include the result by Vallentin \cite{V} for the basis of the coherent configuration. See also \cite{Sc}.  

Let  $n$ be a positive integer, $X=\{0,1\}^n$ and $R_i=\{(x,y)\in X\times X \mid d(x,y)=i\}$ for $i\in\{0,1,\ldots,n\}$ where $d(x,y)=|\{\ell\in\{1,\ldots,n\} \mid x_\ell\neq y_\ell \}|$ is the Hamming distance.    
The binary Hamming scheme is a pair $(X,\{R_i\}_{i=0}^n)$. 
For $i\in\{0,1,\ldots,n\}$, define $X_i=\{x\in X\mid d(x,0)=i\}$ where $0=(0,\ldots,0)$.

Define $(a)_0=1,(a)_k=a(a+1)\cdots(a+k-1)$ and 
$$
Q_k(x;-a-1,-b-1,m)=\frac{1}{\binom{m}{k}}\sum_{j=0}^k (-1)^j \frac{\binom{b-k+j}{j}}{\binom{a}{j}}\binom{m-x}{k-j}\binom{x}{j} 
$$ 
to be Hahn polynomials of degree with respect to $x$ (for integers $m,a,b$ with $a\geq m,b\geq m\geq 0$).   
\begin{theorem}[See {\cite[Theorem~4.1]{V}}]
For $x,y \in X$, define $v(x,y)=|\{\ell\in\{1,\ldots,n\} \mid x_\ell=1,y_\ell=0 \}|$. 
For $k\in\{0,\ldots,\lfloor n/2\rfloor\}$ and $i,j\in\{k,\ldots,n-k\}$,
define 
$$
E_{k,i,j}(x,y)=\begin{cases}\frac{\binom{n}{k}-\binom{n}{k-1}}{(\binom{n}{i}\binom{n}{j})^{1/2}}\left(\frac{(-j)_k(i-n)_k}{(-i)_k(j-n)_k}\right)^{-\frac{1}{2}}Q_k(v(x,y);-(n-i)-1,-i-1,j) & \text{ if }x\in X_i,y\in X_j,\\
0 & \text{ if }x\not\in X_i \text{ or } y\not\in X_j. 
\end{cases}
$$
Then $E_{k,i,j}$ ($k\in\{0,\ldots,\lfloor n/2\rfloor\}$ and $i,j\in\{k,\ldots,n-k\}$) form a basis satisfying (B1)-(B4). 
In particular, the Terwilliger algebra of $H(n,2)$ is a $Q$-polynomial coherent configuration.  
\end{theorem}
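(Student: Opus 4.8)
The goal is to verify that the explicit matrices $E_{k,i,j}$ form a basis of the coherent algebra satisfying (B1)--(B4), and that the resulting configuration is $Q$-polynomial in the sense of Definition~\ref{Q-polycc}. The overall strategy is to identify the index $k$ with the ``eigenspace level'' of the Terwilliger algebra: for each ordered pair of fibers $(X_i,X_j)$ the matrices $E_{k,i,j}$ with $k$ ranging over $\{0,\ldots,\min(i,j,n-i,n-j)\}$ should form a basis of $\mathcal{A}^{(i,j)}$ consisting of entry-wise evaluations of Hahn polynomials, and the parameter $k$ provides exactly the polynomial grading needed for the $Q$-polynomial property. Thus I would fix, for each pair $(i,j)$ of fibers, a relabelling that sends $E_{k,i,j}$ to the $E_\ell^{(i,j)}$ of the general framework, with $\ell=k$ when $i=j$ and $\ell=k-\varepsilon_{i,j}$ otherwise, so that $\tilde r_{i,j}=\min(i,j,n-i,n-j)-\varepsilon_{i,j}$ matches the number of distinct Hamming relations between the two shells.

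First I would establish (B1) and (B3). Property (B1) is the direct computation that $E_{0,i,j}$ reduces to the normalized all-ones block $\tfrac{1}{\sqrt{|X_i||X_j|}}J_{|X_i|,|X_j|}$, which follows since $Q_0\equiv 1$ and the normalizing constant collapses to $(\binom{n}{i}\binom{n}{j})^{-1/2}$. Property (B3), the symmetry $(E_{k,i,j})^\top=E_{k,j,i}$, I would derive from the reflection symmetry of the Hahn polynomials together with the substitution $v(y,x)=j-k+(\text{terms})$ that interchanges the roles of $i$ and $j$; the prefactor $((-j)_k(i-n)_k/((-i)_k(j-n)_k))^{-1/2}$ is visibly arranged to be invariant under this swap. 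The linear independence half of (B2) follows because the $E_{k,i,j}$ have disjoint support patterns across distinct $(i,j)$ blocks and, within a block, the Hahn polynomials $Q_k(\,\cdot\,)$ have distinct degrees in $v$; a dimension count against $\dim\mathcal{A}^{(i,j)}=s_{i,j}+\delta_{i,j}$ then upgrades independence to a basis.

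The main obstacle is (B4), the orthogonality relation $E_\ell^{(i,j)}E_{\ell'}^{(i',j')}=\delta_{\ell,\ell'}\delta_{j,i'}E_\ell^{(i,j')}$. The factor $\delta_{j,i'}$ is immediate from the block structure, so the real content is the matrix product $\sum_{y\in X_j}E_{k,i,j}(x,y)E_{k',j,h}(y,z)=\delta_{k,k'}E_{k,i,h}(x,z)$. This is an orthogonality/summation identity for Hahn polynomials summed over an intermediate shell $X_j$, and it is precisely here that the carefully chosen normalizing constants must conspire to produce $\delta_{k,k'}$ together with the correct constant for the product block $(i,h)$. My plan is to invoke the addition-type formula for Hahn polynomials recorded in \cite{V}, i.e.\ to cite that these $E_{k,i,j}$ are the matrix entries of the primitive idempotent decomposition of the Terwilliger algebra, so that (B4) is exactly the statement that $\{E_{k,i,j}\}_{i,j}$ for fixed $k$ generate a matrix unit system inside each simple two-sided ideal $\mathfrak{C}_k$ of the algebra. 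I would cite \cite[Theorem~4.1]{V} for this step rather than re-deriving the Hahn-polynomial summation by hand. Finally, the $Q$-polynomial conclusion is the cleanest part: since $E_{k,i,j}(x,y)$ is, up to the block-dependent constant $c_k^{(i,j)}$, the evaluation $Q_k(v(x,y))$ of a degree-$k$ Hahn polynomial, and since the Hahn polynomials satisfy a three-term recurrence in $k$, the entry-wise identity $\sqrt{|X_i||X_j|}\,E_k^{(i,j)}=v_k^{(i,j)}(\sqrt{|X_i||X_j|}\,E_1^{(i,j)})$ holds for suitable degree-$k$ polynomials $v_k^{(i,j)}$, which is exactly condition~(1) of Proposition~\ref{prop:Q-poly}. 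Hence $\mathcal{C}$ is $Q$-polynomial by Definition~\ref{Q-polycc}.
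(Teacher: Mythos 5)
The paper gives no proof of this statement at all: Section~\ref{sec:hn2} is explicitly imported from Vallentin, the theorem carries the citation \cite[Theorem~4.1]{V}, and the text merely ``includes the result,'' so there is no in-paper argument to compare yours against. Your sketch is consistent with what such a proof looks like, and the parts you actually carry out are sound: (B1) from $Q_0\equiv 1$ and $|X_i|=\binom{n}{i}$; linear independence from the block supports plus the degree grading, upgraded to (B2) by the dimension count; and the $Q$-polynomial conclusion via Proposition~\ref{prop:Q-poly}(1), since $d(x,y)=2v(x,y)+j-i$ on the block $X_i\times X_j$, so the entries of $E_{k,i,j}$ are degree-$k$ polynomial functions of the entries of $E_{1,i,j}$. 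However, for the one genuinely hard step, the matrix-unit relation (B4), you defer to \cite[Theorem~4.1]{V} exactly as the paper does for the whole theorem, so the proposal does not supply the Hahn-polynomial summation that is the real content here; as a self-contained proof it is incomplete in the same place the paper is.

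Two bookkeeping slips are worth fixing. First, the relabelling $\ell=k-\varepsilon_{i,j}$ for $i\neq j$ is off by one: the number of relations between $X_i$ and $X_j$ ($i\neq j$) is $\min(i,j,n-i,n-j)+1=r_{i,j}$, hence $\tilde r_{i,j}=\min(i,j,n-i,n-j)$ and one should take $\ell=k$ in all cases (the count of admissible $k$ already matches $\tilde r_{i,j}+1$). Second, the prefactor $\bigl((-j)_k(i-n)_k/((-i)_k(j-n)_k)\bigr)^{-1/2}$ is \emph{not} invariant under swapping $i$ and $j$ --- it is sent to its reciprocal (already for $k=1$ and $i\neq j$ the ratio differs from $1$). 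That inversion is precisely what is needed to compensate the asymmetry of the Hahn duality relating $Q_k(v(x,y);-(n-i)-1,-i-1,j)$ to $Q_k(v(y,x);-(n-j)-1,-j-1,i)$, so (B3) still holds, but the justification as you stated it is wrong.
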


\subsection{Tight relative $2e$-designs in $H(n,2)$ on two shells}
It was shown in  \cite[Theorem~5.3]{BBTY2020} that a tight relative $t$-design in $H(n,2)=(X,\{R_i\}_{i=0}^n)$ on two shells yields a coherent configuration. 
We include the result and claim that the resulting coherent configuration is $Q$-polynomial. 

 A weighted subset of $X$ a pair $(Y, \omega)$ of a subset $Y$ of $X$ and a function $\omega:Y \rightarrow (0, \infty)$. 
Define the characteristic vector $\chi=\chi_{Y,\omega}$ of a weighted subset $(Y,\omega)$ by $\chi(x)$ equals to $\omega(x)$ if $x\in Y$ and $0$ if $x\not\in Y$. 
A weighted subset $(Y, \omega)$ is said to be  a relative $t$-design with respect to $x\in X$  if $E_i\chi\in\text{span}\{E_i \hat{x}\}_{i=1}^t$ where $\hat{x}$ is the characteristic vector of $\{x\}$. 
Here we assume $x=(0,\ldots,0)$ and set $L=L_Y=\{\ell \mid Y\cap X_\ell\not=\emptyset\}$. 
Then we say that $(Y,\omega)$ is supported on $\cup_{\ell \in L}X_\ell$. 
The following is Fisher type inequality due to \cite{BB2012} and \cite{X}: for a realtive 2e-design supported on $\bigcup_{\ell \in L}X_\ell$, $|Y|\geq \sum_{i=0}^{\min\{|L|-1,e\}}\binom{n}{e-i}$. 
A relative $2e$-design is tight if equality holds above. 

Let $(Y,\omega)$ be a tight relative $2e$-design on two shells $X_\ell\cup X_m$ where $e\leq \ell \leq m \leq n-\ell$, that is $|Y|=\binom{n}{e}+\binom{n}{e-1}$.  
Then  $Y_i=Y \cap X_i$ is a $t_i:=(2e-1)$-design in $J(n,i)$ for $i\in\{\ell,m\}$ and the degree $s_{i,j}$ between $Y_i$ and $Y_j$ is at most $e$. 
It was shown in \cite[Theorem~5.3]{BBTY2020} that $Y_\ell\cup Y_m$ yields a $Q$-polynomial coherent configuration. 

Inspired by this theorem, we show the following theorem, which generalizes Theorem~\ref{thm:Q-polyccDel} to $Q$-polynomial coherent configurations.  
We use the following notation. 
 For a $Q$-polynomial coherent configuration $(\bigcup_{i=1}^nX_i,\{R_\ell^{(i,j)} \mid 1\leq i,j\leq n,\varepsilon_{i,j}\leq \ell \leq r_{i,j}\})$ with fibers $X_1,X_2,\ldots,X_n$ and a subset $Y_i$ of $X_i$ for $i\in \{1,2,\ldots,n\}$, define 
$$
A(Y_i,Y_j)=\{\ell \mid 1\leq \ell \leq r_{i,j},R_\ell^{(i,j)} \cap(Y_i\times Y_j)\neq \emptyset\},
$$
and set $s_{i,j}=|A(Y_i,Y_j)|$. 
For $\ell\in A(Y_i,Y_j)$, define 
$$
\tilde{R}_\ell^{(i,j)}=\{(x,y)\in \bigcup_{i=1}^nX_i\times \bigcup_{i=1}^nX_i \mid x\in Y_i,y\in Y_j, (x,y)\in R_\ell^{(i,j)} \}.
$$
For $i\in\{1,2,\ldots,n\}$, define $\Delta_{Y_i}$ be the diagonal matrix indexed by the elements of $\bigcup_{i=1}^nX_i$ with $(x,x)$-entry equal to $1$ if $x\in Y_i$ and $0$ otherwise, and 
$\tilde{\Delta}_{Y_i}$ as the matrix obtained from $\Delta_{Y_i}$ by restricting the rows to $\bigcup_{i=1}^n Y_i$. 
Note that 
\begin{align}\label{eq:dd1}
(\tilde{\Delta}_{Y_i})^\top\tilde{\Delta}_{Y_i}=\Delta_{Y_i}\text{ and }\tilde{\Delta}_{Y_i}\Delta_{Y_i}=\tilde{\Delta}_{Y_i}.
\end{align}

\begin{theorem}\label{thm:Q-polyccDel2}
Let $\mathcal{C}$ be a $Q$-polynomial coherent configuration with fibers $X_1,X_2,\ldots,X_n$.
Let $Y_i$ be a $t_i$-design in a $Q$-polynomial scheme on $X_i$ with $d_i=r_{i,i}$ classes for $i \in \{1,2,\dots,n\}$.
Define $s_{i,j}=|\{\ell\in\{1,2,\ldots,r_{i,j}\} \mid R_{\ell}^{(i,j)}\cap(Y_i\times Y_j)\neq \emptyset\}|$. 
If $s_{i,j}+s_{j,h}-2 \leq t_j$ holds for any $i,j,h \in \{1,2,\dots ,n\}$, 
then $(\bigcup_{i=1}^n Y_i, \{\tilde{R}_\ell^{(i,j)} \mid 1\leq i,j\leq n,\varepsilon_{i,j}\leq \ell \leq s_{i,j}\})$ is a $Q$-polynomial coherent configuration.
\end{theorem}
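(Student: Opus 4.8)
The plan is to follow the proof of Theorem~\ref{thm:Q-polyccDel} almost verbatim, replacing the single ambient scheme on $X$ by the fibers of the $Q$-polynomial coherent configuration $\mathcal{C}$, and replacing the global primitive idempotents by the basis $\{E_\ell^{(i,j)}\}$ of the coherent algebra of $\mathcal{C}$ supplied by (B1)--(B4). Concretely, I would define candidate idempotents for the restricted object by $\tilde{E}_\ell^{(i,j)}=c_\ell^{(i,j)}\,\tilde{\Delta}_{Y_i}E_\ell^{(i,j)}(\tilde{\Delta}_{Y_j})^\top$ for suitable nonzero scalars $c_\ell^{(i,j)}$, and then verify that $\{\tilde{E}_\ell^{(i,j)}\}$ is a basis of the coherent algebra on $\bigcup_{i=1}^n Y_i$ satisfying (B1)--(B4). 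Granting this, the span of the $\tilde{E}_\ell^{(i,j)}$ coincides with the span of the adjacency matrices of the $\tilde{R}_\ell^{(i,j)}$, so $(\bigcup_i Y_i,\{\tilde{R}_\ell^{(i,j)}\})$ is a coherent configuration, and the $Q$-polynomial property is inherited as explained below.

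The whole argument rests on a single identity, the analogue of~\eqref{eq:11}: for $\alpha\in\{0,\ldots,s_{i,j}-1\}$ and $\beta\in\{0,\ldots,s_{j,h}-1\}$ one should have $E_\alpha^{(i,j)}\Delta_{Y_j}E_\beta^{(j,h)}=\kappa\,\delta_{\alpha,\beta}\,E_\alpha^{(i,h)}$ for an explicit constant $\kappa$ proportional to $|Y_j|/|X_j|$. To prove it I would show that $M=E_\alpha^{(i,j)}\Delta_{Y_j}E_\beta^{(j,h)}-\kappa\delta_{\alpha,\beta}E_\alpha^{(i,h)}$ has vanishing norm. The crucial reduction is that, by (B4) together with $(E_\beta^{(j,h)})^\top=E_\beta^{(h,j)}$ and cyclic invariance of the trace, every trace of the form $\mathrm{tr}(E_\alpha^{(i,j)}\Delta_{Y_j}E_\beta^{(j,h)}(\,\cdot\,)^\top)$ collapses to a trace over the single fiber scheme on $X_j$, namely $\mathrm{tr}(E_\alpha^{(j,j)}\Delta_{Y_j}E_\beta^{(j,j)}\Delta_{Y_j})$, because $E_\beta^{(j,h)}E_\beta^{(h,j)}=E_\beta^{(j,j)}$ and $E_\alpha^{(j,i)}E_\alpha^{(i,j)}=E_\alpha^{(j,j)}$. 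This last trace equals $\tfrac{|Y_j|}{|X_j|^2}\sum_{n}q_{\alpha,\beta,n}^{(j,j)}b_n^{(j)}$, where $(b_n^{(j)})$ is the dual inner distribution of $Y_j$ in the fiber scheme. Since $\mathcal{C}$ is $Q$-polynomial, each fiber is, so $q_{\alpha,\beta,n}^{(j,j)}=0$ for $n>\alpha+\beta$; since $\alpha+\beta\le s_{i,j}+s_{j,h}-2\le t_j$ and $Y_j$ is a $t_j$-design, $b_n^{(j)}=0$ for $1\le n\le t_j$. Hence only the $n=0$ term survives, which is proportional to $\delta_{\alpha,\beta}$, and this yields $M=0$ in the off-diagonal case and fixes $\kappa$ in the diagonal case.

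With the identity in hand, (B4) for the $\tilde{E}_\ell^{(i,j)}$ is immediate from~\eqref{eq:dd1}, since $\tilde{E}_\alpha^{(i,j)}\tilde{E}_\beta^{(j,h)}=c_\alpha^{(i,j)}c_\beta^{(j,h)}\,\tilde{\Delta}_{Y_i}E_\alpha^{(i,j)}\Delta_{Y_j}E_\beta^{(j,h)}(\tilde{\Delta}_{Y_h})^\top$, and the $c_\ell^{(i,j)}$ are fixed once and for all to normalize $\kappa$ to $1$ and to make $E_0^{(i,j)}$ the all-ones block required by (B1); property (B3) holds by transposition, and (B2) is checked exactly as in Theorem~\ref{sphere}, by multiplying a dependence relation on the left and right by $\tilde{E}_m^{(i,i)}$ and $\tilde{E}_m^{(j,j)}$ to isolate each coefficient. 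For $Q$-polynomiality, restriction to the $Y_i\times Y_j$ block commutes with the entry-wise product, so the Krein parameters $\tilde{q}_{\alpha,\beta,n}^{(i,j)}$ of the restricted configuration are nonzero scalar multiples of the $q_{\alpha,\beta,n}^{(i,j)}$; the tridiagonality with nonzero off-diagonal entries of $\hat{B}_1^{(i,j)}$ thus transfers to $\hat{\tilde{B}}_1^{(i,j)}$, and Proposition~\ref{prop:Q-poly}(3) applies.

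I expect the main obstacle to be bookkeeping in the key identity rather than a conceptual difficulty. One must confirm that the index $\alpha$ labelling $E_\alpha^{(i,j)}$, $E_\alpha^{(j,h)}$ and $E_\alpha^{(j,j)}$ refers consistently to the same simple two-sided ideal (so that the collapses $E_\beta^{(j,h)}E_\beta^{(h,j)}=E_\beta^{(j,j)}$ are legitimate) and that the degree bound $q_{\alpha,\beta,n}^{(j,j)}=0$ for $n>\alpha+\beta$ is available on the full relevant range of $\alpha,\beta$. Establishing the evaluation $\mathrm{tr}(E_\alpha^{(j,j)}\Delta_{Y_j}E_\beta^{(j,j)}\Delta_{Y_j})=\tfrac{|Y_j|}{|X_j|^2}\sum_n q_{\alpha,\beta,n}^{(j,j)}b_n^{(j)}$ is the coherent-configuration analogue of the norm computation in Theorem~\ref{thm:Q-polyccDel} and relies on the relations among the $q_{\ell,m,n}^{(i,j)}$ and the dual distribution collected in Proposition~\ref{6}; once this is set up, the remaining cross-term and normalization computations are routine.
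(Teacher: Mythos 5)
Your proposal is correct and follows essentially the same route as the paper's proof: the key identity $|X_j|E_\alpha^{(i,j)}\Delta_{Y_j}E_\beta^{(j,h)}=|Y_j|\delta_{\alpha,\beta}E_\alpha^{(i,h)}$ established by a vanishing-norm computation that reduces to $\sum_\ell q_{\alpha,\beta,\ell}^{(j,j)}b_\ell^{(j)}=0$ via the degree bound from $Q$-polynomiality of the fiber and the $t_j$-design condition, followed by defining $\tilde{E}_\ell^{(i,j)}$ as normalized restrictions and transferring the tridiagonal Krein structure by Proposition~\ref{prop:Q-poly}(3). Your trace reduction $\mathrm{tr}\bigl(E_\alpha^{(j,j)}\Delta_{Y_j}E_\beta^{(j,j)}\Delta_{Y_j}\bigr)=\tfrac{|Y_j|}{|X_j|^2}\sum_n q_{\alpha,\beta,n}^{(j,j)}b_n^{(j)}$ is exactly the computation the paper leaves implicit.
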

\begin{proof}
Let $i,j,h\in \{1,2,\ldots,n\}$, $\alpha\in\{0,1,\ldots,s_{i,j}-1\}$, $\beta\in\{0,1,\ldots, s_{j,h}-1\}$. 
Since $s_{i,j}+s_{j,h}-2\leq t_j$, it holds that $\alpha+\beta+1\leq t_j+1$ and $q_{\alpha,\beta}^\ell=0$ for $\ell\geq t_j+1$. 
Since $Y_j$ is a $t_j$-design, the dual distribution $(b_\ell^{(j)})_{\ell=0}^{d_{j}}$ of $Y_j$ satisfies that $b_\ell^{(j)}=0$ for $\ell\leq t_j$. Then  
\begin{align*}
|||X_j|E_{\alpha}^{(i,j)}\Delta_{Y_j} E_{\beta}^{(j,h)}-|Y_j|\delta_{\alpha,\beta}E_{\alpha}^{(i,h)}||^2&=|Y_j|\sum\limits_{\ell=1}^{d_j} q_{\alpha,\beta,\ell}^{(j,j)} b_\ell^{(j)} \displaybreak[0]\\
&=|Y_j|\left(\sum\limits_{\ell=1}^{t_j} q_{\alpha,\beta,\ell}^{(j,j)} b_\ell^{(j)}+\sum\limits_{\ell=t_j+1}^{d_j} q_{\alpha,\beta,\ell}^{(j,j)} b_\ell^{(j)}\right) \\
&=0.
\end{align*}
Therefore 
$$
|X_j|E_{\alpha}^{(i,j)}\Delta_{Y_j} E_{\beta}^{(j,h)}=|Y_j|\delta_{\alpha,\beta}E_{\alpha}^{(i,h)}. 
$$
Multiplying $\Delta_{Y_i}$ on the left side and $\Delta_{Y_h}$ on the right side, we obtain
\begin{align}
|X_j|\Delta_{Y_i}E_{\alpha}^{(i,j)}\Delta_{X_j} E_{\beta}^{(j,h)}\Delta_{Y_h}=|Y_j|\delta_{\alpha,\beta}\Delta_{Y_i}E_{\alpha}^{(i,h)}\Delta_{Y_h}. \label{eq:qc11}
\end{align}
Define 
\begin{align*}
\tilde{A}_\ell^{(i,j)}=\tilde{\Delta}_{Y_i}A_\ell^{(i,j)}(\tilde{\Delta}_{Y_j})^\top,\quad 
\tilde{E}_{\ell'}^{(i,j)}=\frac{\sqrt{|X_i||X_j|}}{\sqrt{|Y_i||Y_j|}}\tilde{\Delta}_{Y_i}E_{\ell'}^{(i,j)}(\tilde{\Delta}_{Y_j})^\top 
\end{align*}
for $i,j\in\{1,2,\ldots,n\}, \ell \in A(Y_i,Y_j), \ell' \in \{0,1,\ldots,s_{i,j}-\varepsilon_{i,j}\}$.  
\eqref{eq:qc11} with \eqref{eq:dd1} implies that 
$$
\tilde{E}_{\alpha}^{(i,j)}\tilde{E}_{\beta}^{(i',j')}=\delta_{\alpha,\beta}\delta_{j,i'}\tilde{E}_{\alpha}^{(i,j')}. 
$$
Therefore $\{\tilde{E}_\ell^{(i,j)} \mid 1\leq i,j\leq n, 0\leq \ell \leq s_{i,j}-\varepsilon_{i,j}\}$ is linearly independent and 
$$
\text{span}\{\tilde{E}_\ell^{(i,j)} \mid 1\leq i,j\leq n, 0\leq \ell\leq s_{i,j}-\varepsilon_{i,j}  \}
$$ 
is closed under ordinary multiplication. 
Since 
$$
\text{span}\{\tilde{A}_\ell^{(i,j)} \mid 1\leq i,j\leq n, \ell\in A(Y_i,Y_j)  \}=\text{span}\{\tilde{E}_\ell^{(i,j)} \mid 1\leq i,j\leq n, 0\leq \ell\leq s_{i,j}-\varepsilon_{i,j}  \}
$$ 
holds, $(\bigcup_{i=1}^n Y_i, \{\tilde{R}_\ell^{(i,j)} \mid 1\leq i,j\leq n,\varepsilon_{i,j}\leq \ell \leq s_{i,j}\}) $ is a coherent configuration.
Krein numbers of the coherent configuration are positive scalar multiple of those for the $Q$-polynomial coherent configuration, so Proposition~\ref{prop:Q-poly}(3) is satisfied.
\end{proof}

\section{Future works}\label{sec:open}
In the present paper, we introduce the $Q$-polynomial property for coherent configurations. 
The parameters of the coherent configurations are studied in the same manner as association schemes and several examples are obtained from Delsarte designs in $Q$-polynomial association schemes and spherical designs. 
We list the related problems in this context. 
\begin{problem}
\begin{enumerate}
\item 
Can we develop design theory in $Q$-polynomial coherent configurations?  
For bounds for subsets in coherent configurations, see \cite{H,HW2014JAC}  
\item In \cite{BB2010}, it was shown that Euclidean designs with certain property have the structure of coherent configurations. Are these coherent configurations $Q$-polynomial? See \cite{NS} for Euclidean designs. 
\item Can we obtain Euclidean designs from coherent configurations? If so, can we determine the strength as Euclidean designs from parameters of $Q$-polynomial coherent configurations? See \cite{SJCD} for spherical designs obtained from $Q$-polynomial schemes.  
\item The absolute bound for symmetric association schemes was shown in \cite[Theorems~4.8, 4.9]{BI} and examples attaining the inequality in \cite[Theorem~4.9]{BI} are tight spherical designs. 
On the other hand, the absolute bound for coherent configurations was shown in \cite{HW2014LAA}. Are examples of coherent configurations attaining the absolute bound related to tight Euclidean designs?    
\item In \cite{ST}, the cross-intersection theorem is stated in coherent configurations related to Grassmann schemes. Can we deal with the cross-intersection theorem in $Q$-polynomial coherent configurations whose fibers are distance regular graphs? 
\end{enumerate}
\end{problem}

\section*{Acknowledgments}
The author would like to thank Hajime Tanaka for valuable comments, especially suggesting Section~\ref{subsec:pq} and Theorem~\ref{thm:Q-polyccDel2}, and encouraging him for a decade.   
The author is supported by JSPS KAKENHI Grant Number 18K03395 and 20K03527.

%%%%%%%%%%%%%%%%%%%%%%%%%%%%%%%%%%%%%%%%%%%%%%%%%%%%%%%%%%%%%%%%%%%%%%%%%%%%%%%%%%%%%%%%%%%%%%%%%%%%%%%%%%%%%%%%%%%%%%%%%%%%%%%%%%%%%%%%%%%%%%%%%%%%%%%%%%%%%%%%%%%%%%%%%%%%%%%%%%%%%%%%%%%%%%%%%%%%%%%%%%%%%%%%%%%%%%
%%%%%%%%%%%%%%%%%%%%%%%%%%%%%%%%%%%%%%%%%%%%%%%%%%%%%%%%%%%%%%%%%%%%%%%%%%%%%%%%%%%%%%%%%%%%%%%%%%%%%%%%%%%%%%%%%%%%%%%%%%%%%%%%%%%%%%%%%%%%%%%%%%%%%%%%%%%%%%%%%%%%%%%%%%%%%%%%%%%%%%%%%%%%%%%%%%%%%%%%%%%%%%%%%%%%%%

\appendix
\section{Appendix: Parameters}\label{sec:a}

\begin{proposition}\label{1}
Let $\mathcal{C}$ be a coherent configuration such that each fiber is a symmetric association scheme and there exists a basis $\{E_\ell^{(i,j)}\mid i,j\in\Omega, 0\leq \ell\leq \tilde{r}_{i,j}\}$ of $\mathcal{A}$ satisfying (B1)-(B4). 
Then the following $(1)-(6)$ hold:
\begin{enumerate}
\item $p_{0,m,n}^{(i,i,h)}=\delta_{m,n}$,
\item $p_{\ell,0,n}^{(i,j,j)}=\delta_{\ell,n}$,
\item $p_{\ell,m,0}^{(i,j,i)}=\delta_{\ell,m}k_\ell^{(i,j)}$,
\item $p_{\ell,m,n}^{(i,j,h)}=p_{m,\ell,n}^{(h,j,i)}$,
\item $\sum\limits_{m=\varepsilon_{j,h}}^{r_{j,h}}p_{\ell,m,n}^{(i,j,h)}=k_\ell^{(i,j)}$,
\item $|X_i|k_n^{(i,j)}p_{\ell,m,n}^{(i,h,j)}=|X_j|k_m^{(j,h)}p_{n,\ell,m}^{(j,i,h)}=|X_h|k_\ell^{(h,i)}p_{m,n,\ell}^{(h,j,i)}$.
\end{enumerate}
\end{proposition}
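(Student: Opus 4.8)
The plan is to derive all six identities from three elementary facts about the basis $\{A_\ell^{(i,j)}\}$: that $A_0^{(i,i)}=I_{|X_i|}$ on the fiber $X_i$; that $(A_\ell^{(i,j)})^\top=A_\ell^{(j,i)}$; and that $\sum_m A_m^{(j,h)}=J_{|X_j|,|X_h|}$, since $\{R_m^{(j,h)}\}$ partitions $X_j\times X_h$, together with the fact that each $A_\ell^{(i,j)}$ has constant row sum $k_\ell^{(i,j)}$. Items (1) and (2) I would dispose of by multiplying the defining relation $A_\ell^{(i,j)}A_m^{(j,h)}=\sum_n p_{\ell,m,n}^{(i,j,h)}A_n^{(i,h)}$ by an identity factor: for (1), $A_0^{(i,i)}A_m^{(i,h)}=A_m^{(i,h)}$ forces $p_{0,m,n}^{(i,i,h)}=\delta_{m,n}$, and (2) is the mirror image using $A_0^{(j,j)}=I$ on the right. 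For (4) I would transpose the defining relation, using $(AB)^\top=B^\top A^\top$ and the transpose rule, to obtain $A_m^{(h,j)}A_\ell^{(j,i)}=\sum_n p_{\ell,m,n}^{(i,j,h)}A_n^{(h,i)}$, whose left side is by definition $\sum_n p_{m,\ell,n}^{(h,j,i)}A_n^{(h,i)}$, giving the claim. For (5) I would multiply $A_\ell^{(i,j)}$ on the right by $\sum_m A_m^{(j,h)}=J_{|X_j|,|X_h|}$; the constant row sum yields $A_\ell^{(i,j)}J_{|X_j|,|X_h|}=k_\ell^{(i,j)}J_{|X_i|,|X_h|}=k_\ell^{(i,j)}\sum_n A_n^{(i,h)}$, and matching coefficients of $A_n^{(i,h)}$ gives $\sum_m p_{\ell,m,n}^{(i,j,h)}=k_\ell^{(i,j)}$.

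For (3) I would extract the coefficient of $A_0^{(i,i)}=I$ in $A_\ell^{(i,j)}A_m^{(j,i)}$ via the trace. Writing $A_m^{(j,i)}=(A_m^{(i,j)})^\top$, the quantity $\mathrm{tr}(A_\ell^{(i,j)}(A_m^{(i,j)})^\top)$ is the Frobenius inner product of two $(0,1)$-matrices, hence equals $\delta_{\ell,m}$ times the number of ones in $A_\ell^{(i,j)}$, i.e.\ $\delta_{\ell,m}\,|X_i|\,k_\ell^{(i,j)}$; on the other hand only $A_0^{(i,i)}=I$ among the $A_n^{(i,i)}$ has nonzero trace, so the trace is $p_{\ell,m,0}^{(i,j,i)}|X_i|$, and dividing by $|X_i|$ gives (3). (One may also read (3) off directly as the count of $z$ with $(x,z)\in R_\ell^{(i,j)}\cap R_m^{(i,j)}$.)

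The heart of the proposition is (6), which I would prove by double counting, or rather triple counting. Consider the set $T$ of triples $(x,z,y)$ with $x\in X_i$, $z\in X_h$, $y\in X_j$ and $(x,z)\in R_\ell^{(i,h)}$, $(z,y)\in R_m^{(h,j)}$, $(x,y)\in R_n^{(i,j)}$, and count $|T|$ by holding each of the three pairs fixed. Fixing $(x,y)\in R_n^{(i,j)}$, of which there are $|X_i|k_n^{(i,j)}$, the inner factor is $p_{\ell,m,n}^{(i,h,j)}$, so $|T|=|X_i|k_n^{(i,j)}p_{\ell,m,n}^{(i,h,j)}$; fixing the pair $(y,z)\in R_m^{(j,h)}$ (the transpose of $(z,y)$), of which there are $|X_j|k_m^{(j,h)}$, and counting the admissible $x$ through $A_n^{(j,i)}A_\ell^{(i,h)}$ gives $|T|=|X_j|k_m^{(j,h)}p_{n,\ell,m}^{(j,i,h)}$; fixing $(z,x)\in R_\ell^{(h,i)}$, of which there are $|X_h|k_\ell^{(h,i)}$, and counting $y$ through $A_m^{(h,j)}A_n^{(j,i)}$ gives $|T|=|X_h|k_\ell^{(h,i)}p_{m,n,\ell}^{(h,j,i)}$. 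Equating the three values of $|T|$ is exactly (6). The main obstacle here is not conceptual but bookkeeping: one must keep the fiber labels, the order of the three subscripts in each $p$-parameter, and the transposes mutually consistent, since the cardinality of a relation is $|R_n^{(i,j)}|=|X_i|k_n^{(i,j)}$ and an accidental index transposition would attach the wrong valency to the wrong pair. I would therefore check each of the three counts against the definition of the corresponding intersection number before equating them.
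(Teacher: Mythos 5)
Your proof is correct and follows essentially the same route as the paper: the key identity (6) is established by the identical triple count of $\{(x,y,z)\in X_i\times X_j\times X_h \mid (x,z)\in R_\ell^{(i,h)},\,(z,y)\in R_m^{(h,j)},\,(x,y)\in R_n^{(i,j)}\}$, and (1)--(5) (which the paper partly dismisses as obvious or verifies by direct counting) you obtain by the equivalent matrix manipulations with $A_0^{(i,i)}=I$, transposition, the all-ones matrix, and the trace. All index and fiber bookkeeping in your three counts checks out against the definition $p_{\ell,m,n}^{(i,j,h)}=|\{z\in X_j\mid (x,z)\in R_\ell^{(i,j)},\,(z,y)\in R_m^{(j,h)}\}|$ for $(x,y)\in R_n^{(i,h)}$.
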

\begin{proof}
$(1)-(3)$ are obvious from definition of intersection numbers.

(4): Count the number of elements in $\{z\in X_j \mid (x,z)\in R_\ell^{(i,j)},(z,y)\in R_m^{(j,h)}\}$ for $(x,y)\in R_n^{(i,h)}$,
\begin{align*}
p_{\ell,m,n}^{(i,j,h)}&=|\{z\in X_j \mid (x,z)\in R_\ell^{(i,j)},(z,y)\in R_m^{(j,h)}\}|\\
&=|\{z\in X_j \mid (y,z)\in R_m^{(h,j)},(z,x)\in R_\ell^{(j,i)}\}|\\
&=p_{m,\ell,n}^{(h,j,i)}.
\end{align*}
This proves $(4)$.

(5): Count the number of elements in $\{z\in X_j \mid (x,z)\in R_\ell^{(i,j)}\}$ for $(x,y)\in R_n^{(i,h)}$,
\begin{align*}\sum\limits_{m=\varepsilon_{j,h}}^{r_{j,h}}p_{\ell,m,n}^{(i,j,h)}&=|\bigcup_{m=\varepsilon_{j,h}}^{r_{j,h}}\{z\in X_j \mid (x,z)\in R_\ell^{(i,j)},(z,y)\in R_m^{(j,h)}\}|\\
&=|\{z\in X_j \mid (x,z)\in R_\ell^{(i,j)}\}|\\
&=k_\ell^{(i,j)}.
\end{align*}
This proves $(5)$.

(6): Count the number of element in $X_{\ell,m,n}^{(i,j,h)}=\{(x,y,z)\in X_i\times X_j\times X_h \mid (x,y)\in R_n^{(i,j)},(y,z)\in R_m^{(j,h)},(z,x)\in R_\ell^{(h,i)}\}$,
\begin{align*}
|X_{\ell,m,n}^{(i,j,h)}|&=|\bigcup\limits_{(x,y)\in R_n^{(i,j)}}\{(x,y,z)\mid (x,z)\in R_\ell^{(i,h)},(z,y)\in R_m^{(h,j)}\}|=|X_i|k_n^{(i,j)}p_{\ell,m,n}^{(i,h,j)}\\
&=|\bigcup\limits_{(y,z)\in R_m^{(j,h)}}\{(x,y,z)\mid (y,x)\in R_n^{(j,i)},(x,z)\in R_\ell^{(i,h)}\}|=|X_j|k_m^{(j,h)}p_{n,\ell,m}^{(j,i,h)}\\
&=|\bigcup\limits_{(z,x)\in R_\ell^{(h,i)}}\{(x,y,z)\mid (z,y)\in R_m^{(h,j)},(y,x)\in R_n^{(j,i)}\}|=|X_h|k_\ell^{(h,i)}p_{m,n,\ell}^{(h,j,i)}. 
\end{align*}
This proves $(6)$.
\end{proof}

\begin{proposition}\label{6}
Let $\mathcal{C}$ be a coherent configuration such that each fiber is a symmetric association scheme and there exists a basis $\{E_\ell^{(i,j)}\mid i,j\in\Omega, 0\leq \ell\leq \tilde{r}_{i,j}\}$ of $\mathcal{A}$ satisfying (B1)-(B4).
Then the following $(1)-(6)$ hold:
\begin{enumerate}
\item $q_{0,m,n}^{(i,j)}=\delta_{m,n}$,
\item $q_{\ell,0,n}^{(i,j)}=\delta_{\ell,n}$,
\item $q_{\ell,m,0}^{(i,j)}=\delta_{\ell,m}m_\ell^{(i,j)}$,
\item $q_{\ell,m,n}^{(i,j)}=q_{m,\ell,n}^{(i,j)}$,
\item $q_{\ell,m,n}^{(i,j)}=q_{\ell,m,n}^{(j,i)}$,
\item $m_n^{(i,j)}q_{\ell,\ell',n}^{(i,j)}=m_{\ell'}^{(i,j)}q_{n,\ell,\ell'}^{(i,j)}=m_\ell^{(i,j)}q_{\ell',n,\ell}^{(i,j)}$,
\item $\sum\limits_{\alpha=0}^{\tilde{r}_{i,j}}q_{m,n,\alpha}^{(i,j)}q_{\ell,\alpha,\beta}^{(i,j)}=\sum\limits_{\alpha=0}^{\tilde{r}_{i,j}}q_{\ell,m,\alpha}^{(i,j)}q_{n,\alpha,\beta}^{(i,j)}$.
\end{enumerate}
\end{proposition}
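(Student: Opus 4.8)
The plan is to treat Proposition~\ref{6} as the Krein (dual) counterpart of Proposition~\ref{1}: wherever the latter used the ordinary matrix product together with a counting argument, I would use the entry-wise product $\circ$ together with a trace bilinear form. Concretely, I would first record the symmetric bilinear form $\inner{A}{B}=\mathrm{tr}(AB^\top)=\sum_{x,y}A_{x,y}B_{x,y}$ on $\mathcal{A}^{(i,j)}$ and establish the orthogonality relation $\inner{E_\ell^{(i,j)}}{E_m^{(i,j)}}=\delta_{\ell,m}m_\ell^{(i,j)}$. This follows by combining (B3), which gives $(E_m^{(i,j)})^\top=E_m^{(j,i)}$, with (B4), which gives $E_\ell^{(i,j)}E_m^{(j,i)}=\delta_{\ell,m}E_\ell^{(i,i)}$; taking traces identifies the multiplicity $m_\ell^{(i,j)}$ with $\mathrm{tr}(E_\ell^{(i,i)})$ and shows distinct idempotents are orthogonal. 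I would also record $m_0^{(i,j)}=\mathrm{tr}(E_0^{(i,i)})=1$ and $\sum_{x,y}(E_n^{(i,j)})_{x,y}=\delta_{n,0}\sqrt{|X_i||X_j|}$, the latter obtained by applying (B4) to $E_n^{(i,j)}E_0^{(j,j)}$.

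The four formal identities are then immediate. For (1), since $E_0^{(i,j)}=\frac{1}{\sqrt{|X_i||X_j|}}J_{|X_i|,|X_j|}$ by (B1), the entry-wise product satisfies $E_0^{(i,j)}\circ E_m^{(i,j)}=\frac{1}{\sqrt{|X_i||X_j|}}E_m^{(i,j)}$, and comparison with the defining relation yields $q_{0,m,n}^{(i,j)}=\delta_{m,n}$; identity (2) follows from (1) by the commutativity of $\circ$, which also gives (4) at once. For (5) I would transpose the defining relation of $q_{\ell,m,n}^{(i,j)}$, using $(A\circ B)^\top=A^\top\circ B^\top$ and (B3) to turn every superscript $(i,j)$ into $(j,i)$ while leaving the scalar $\sqrt{|X_i||X_j|}$ unchanged, and then match coefficients.

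The heart of the argument is (3) and (6), which I would obtain together from the manifestly symmetric triple sum $T(\ell,m,n)=\sum_{x,y}(E_\ell^{(i,j)})_{x,y}(E_m^{(i,j)})_{x,y}(E_n^{(i,j)})_{x,y}=\inner{E_\ell^{(i,j)}\circ E_m^{(i,j)}}{E_n^{(i,j)}}$. Expanding the entry-wise product by its definition and invoking the orthogonality relation collapses the inner sum to a single term, giving $T(\ell,m,n)=\frac{1}{\sqrt{|X_i||X_j|}}q_{\ell,m,n}^{(i,j)}m_n^{(i,j)}$. Since $T$ is symmetric in its three arguments, permuting them yields the three-fold equality (6); specializing to $n=0$ and using $m_0^{(i,j)}=1$ together with the value $T(\ell,m,0)=\frac{1}{\sqrt{|X_i||X_j|}}\delta_{\ell,m}m_\ell^{(i,j)}$ yields (3). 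Finally, (7) is the associativity of $\circ$: expanding $(E_\ell^{(i,j)}\circ E_m^{(i,j)})\circ E_n^{(i,j)}$ and $E_\ell^{(i,j)}\circ(E_m^{(i,j)}\circ E_n^{(i,j)})$ via the defining relation and comparing the coefficient of $E_\beta^{(i,j)}$ gives the two convolutions, which agree after applying (4).

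The step I expect to require the most care is the orthogonality relation underpinning (3) and (6): one must check that the correct pairing is the bilinear trace form $\mathrm{tr}(AB^\top)$ rather than a Hermitian one, so that the triple sum $T$ is genuinely symmetric in all three indices and the multiplicities enter symmetrically; here the interplay of (B3) and (B4) is essential, and it is also what pins down the fiber-independent value $m_\ell^{(i,j)}=\mathrm{tr}(E_\ell^{(i,i)})$.
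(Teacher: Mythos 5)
Your proposal is correct and follows essentially the same route as the paper: items (1), (2), (4), (5), (7) are handled by the same formal manipulations ((B1), commutativity of $\circ$, transposition via (B3), associativity of $\circ$), and your symmetric triple sum $T(\ell,m,n)=\tau(E_\ell^{(i,j)}\circ E_m^{(i,j)}\circ E_n^{(i,j)})$ together with the pairing $\tau(A\circ B)=\mathrm{tr}(AB^\top)$ is exactly the paper's mechanism for (6), with (3) obtained by the same trace computation (you merely package (3) as the $n=0$ specialization of $T$ rather than proving it separately). The identification $m_\ell^{(i,j)}=\mathrm{tr}(E_\ell^{(i,i)})$, which the paper justifies via a rank argument using (B4), is the same point you flag as requiring care.
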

\begin{proof}
(1): On the one hand, by $E_0^{(i,j)}=\frac{1}{\sqrt{|X_i||X_j|}}J_{|X_i|,|X_j|}$, $E_0^{(i,j)}\circ E_m^{(i,j)}=\frac{1}{\sqrt{|X_i||X_j|}}E_m^{(i,j)}$ holds. 
On the other hand, by the definition of Krein numbers,  $E_0^{(i,j)}\circ E_m^{(i,j)}=\frac{1}{\sqrt{|X_i||X_j|}}\sum\limits_{n=0}^{\tilde{r}_{i,j}}q_{0,m,n}^{(i,j)}E_n^{(i,j)}$. 
Since $E_n^{(i,j)}$ ($n\in \{\varepsilon_{i,j},\ldots,\tilde{r}_{i,j}\}$) is a basis of $\mathcal{A}_{i,j}$, comparing these equalities proves $(1)$.

(2) is proved similarly as $(1)$. 

(3): Applying \text{tr} to $q_{\ell,m,0}^{(i,j)}E_0^{(i,i)}=\sqrt{|X_i||X_j|}(E_\ell^{(i,j)}\circ E_m^{(i,j)})E_0^{(j,i)}$,
\begin{align*}
q_{\ell,m,0}^{(i,j)}&=\text{tr}(\sqrt{|X_i||X_j|}(E_\ell^{(i,j)}\circ E_m^{(i,j)})E_0^{(j,i)})\\
&=\sqrt{|X_i||X_j|}\tau(E_\ell^{(i,j)}\circ E_m^{(i,j)}\circ E_0^{(i,j)})\\
&=\tau(E_\ell^{(i,j)}\circ E_m^{(i,j)})\\
&=\text{tr}(E_\ell^{(i,j)}E_m^{(j,i)})\\
&=\delta_{\ell,m}\text{tr}(E_\ell^{(i,i)})\\
&=\delta_{\ell,m}m_\ell^{(i,i)}. 
\end{align*}
Since 
$$
m_\ell^{(i,i)}=\mathrm{rank}E_\ell^{(i,i)}=\mathrm{rank}E_\ell^{(i,j)}E_\ell^{(j,i)}=\mathrm{rank}E_\ell^{(i,j)}=m_\ell^{(i,j)},
$$ 
$(3)$ holds.

(4) follows from $E_\ell^{(i,j)}\circ E_m^{(i,j)}=E_m^{(i,j)}\circ E_\ell^{(i,j)}$ and (5) follows from taking the transpose of $E_\ell^{(i,j)}\circ E_m^{(i,j)}=\frac{1}{\sqrt{|X_i||X_j|}}\sum\limits_{n=0}^{\tilde{r}_{i,j}}q_{\ell,m,n}^{(i,j)}E_n^{(i,j)}$ and ${E_n^{(i,j)}}^\top=E_n^{(j,i)}$.

(6): Applying $\tau$ to $\sqrt{|X_i||X_j|}E_\ell^{(i,j)}\circ E_{\ell'}^{(i,j)}\circ E_n^{(i,j)}$,
\begin{align*}
\sqrt{|X_i||X_j|}\tau(E_\ell^{(i,j)}\circ E_{\ell'}^{(i,j)}\circ E_n^{(i,j)})&=\sqrt{|X_i||X_j|}\text{tr}((E_\ell^{(i,j)}\circ E_{\ell'}^{(i,j)})E_n^{(i,j)})\\
&=\text{tr}(q_{\ell,\ell',n}^{(i,j)}E_n^{(i,j)})\\
&=q_{\ell,\ell',n}^{(i,j)}m_n^{(i,j)}. 
\end{align*}
Further by $E_\ell^{(i,j)}\circ E_{\ell'}^{(i,j)}\circ E_n^{(i,j)}=E_n^{(i,j)}\circ E_\ell^{(i,j)}\circ E_m^{(i,j)}=E_{\ell'}^{(i,j)}\circ E_n^{(i,j)}\circ E_\ell^{(i,j)}$,
$(5)$ holds.

(7): In the equation $E_\ell^{(i,j)}\circ (E_m^{(i,j)}\circ E_n^{(i,j)})=(E_\ell^{(i,j)}\circ E_m^{(i,j)})\circ E_n^{(i,j)}$, the left hand side is
\begin{align*}
E_\ell^{(i,j)}\circ (E_m^{(i,j)}\circ E_n^{(i,j)})&=E_\ell^{(i,j)}\circ(\frac{1}{\sqrt{|X_i||X_j|}}\sum\limits_{\alpha=0}^{\tilde{r}_{i,j}}q_{m,n,\alpha}^{(i,j)}E_{\alpha}^{(i,j)})\\
&=\frac{1}{|X_i||X_j|}\sum\limits_{\beta=0}^{\tilde{r}_{i,j}}(\sum\limits_{\alpha=0}^{\tilde{r}_{i,j}} q_{m,n,\alpha}^{(i,j)}q_{\ell,\alpha,\beta}^{(i,j)})E_\beta^{(i,j)}
\end{align*}
and right hand side is
\begin{align*}
(E_\ell^{(i,j)}\circ E_m^{(i,j)})\circ E_n^{(i,j)}&=(\frac{1}{\sqrt{|X_i||X_j|}}\sum\limits_{\alpha=0}^{\tilde{r}_{i,j}}q_{\ell,m,\alpha}^{(i,j)}E_{\alpha}^{(i,j)})\circ E_n^{(i,j)}\\
&=\frac{1}{|X_i||X_j|}\sum\limits_{\beta=0}^{\tilde{r}_{i,j}}(\sum\limits_{\alpha=0}^{\tilde{r}_{i,j}} q_{\ell,m,\alpha}^{(i,j)}q_{n,\alpha,\beta}^{(i,j)})E_\beta^{(i,j)}. 
\end{align*}
Comparing the coefficient of $E_\beta^{(i,j)}$ yields the desired equality. 
\end{proof}

For a matrix $A$, let $\tau(A)$ be the sum of the entries of $A$. 
\begin{proposition}\label{2}
Let $\mathcal{C}$ be a coherent configuration such that each fiber is a symmetric association scheme and there exists a basis $\{E_\ell^{(i,j)}\mid i,j\in\Omega, 0\leq \ell\leq \tilde{r}_{i,j}\}$ of $\mathcal{A}$ satisfying (B1)-(B4). 
Then the following $(1),(2)$ hold:
\begin{enumerate}
\item $\tau(A_\ell^{(i,j)})=|X_i|k_\ell^{(i,j)}$,
\item $\tau(E_\ell^{(i,j)})=\sqrt{|X_i||X_j|}\delta_{\ell,0}$. 
\end{enumerate}
\end{proposition}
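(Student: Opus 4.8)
The plan is to derive both identities directly from the combinatorial meaning of $\tau$ (entry sum) together with the structural properties (B1) and (B4) of the idempotent basis, with no need for the eigenmatrix machinery.

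For $(1)$ I would simply read off $\tau(A_\ell^{(i,j)})$ as a cardinality. Since $A_\ell^{(i,j)}$ is the $(0,1)$ adjacency matrix of $R_\ell^{(i,j)}$, supported on the block $X_i\times X_j$, its entry sum is exactly $|R_\ell^{(i,j)}|$. Counting $R_\ell^{(i,j)}$ by first fixing $x\in X_i$ and then counting $y\in X_j$ with $(x,y)\in R_\ell^{(i,j)}$, and invoking that this count equals the constant valency $k_\ell^{(i,j)}$ independently of $x$, gives $|R_\ell^{(i,j)}|=\sum_{x\in X_i}k_\ell^{(i,j)}=|X_i|k_\ell^{(i,j)}$. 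This is immediate.

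For $(2)$ the idea is to sandwich $E_\ell^{(i,j)}$ between the two all-ones idempotents $E_0^{(i,i)}$ and $E_0^{(j,j)}$ and compute the result two ways. Writing $J_{|X_i|}$ for the square all-ones matrix indexed by $X_i$, every entry of $J_{|X_i|}E_\ell^{(i,j)}J_{|X_j|}$ equals the entry sum of $E_\ell^{(i,j)}$, so $J_{|X_i|}E_\ell^{(i,j)}J_{|X_j|}=\tau(E_\ell^{(i,j)})J_{|X_i|,|X_j|}$. By (B1) we have $E_0^{(i,i)}=\frac{1}{|X_i|}J_{|X_i|}$, $E_0^{(j,j)}=\frac{1}{|X_j|}J_{|X_j|}$, and $J_{|X_i|,|X_j|}=\sqrt{|X_i||X_j|}\,E_0^{(i,j)}$, whence $E_0^{(i,i)}E_\ell^{(i,j)}E_0^{(j,j)}=\frac{\tau(E_\ell^{(i,j)})}{\sqrt{|X_i||X_j|}}E_0^{(i,j)}$. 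On the other hand, applying (B4) twice (matching the inner fibers each time) yields $E_0^{(i,i)}E_\ell^{(i,j)}E_0^{(j,j)}=\delta_{0,\ell}E_0^{(i,j)}$. Comparing the two expressions and cancelling the nonzero matrix $E_0^{(i,j)}$ gives $\tau(E_\ell^{(i,j)})=\sqrt{|X_i||X_j|}\,\delta_{\ell,0}$.

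Both computations are routine; the only points demanding care are the bookkeeping of the block structure, so that the all-ones matrices are indexed by the correct fibers and (B4) legitimately applies at each multiplication, and the entry-sum identity $J_{|X_i|}E_\ell^{(i,j)}J_{|X_j|}=\tau(E_\ell^{(i,j)})J_{|X_i|,|X_j|}$, which is precisely where the definition of $\tau$ enters. I do not expect any genuine obstacle beyond this dimensional bookkeeping.
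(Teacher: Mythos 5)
Your proof is correct. Part (1) is exactly the paper's argument: read $\tau(A_\ell^{(i,j)})$ as $|R_\ell^{(i,j)}|$ and count by rows using the constant valency. For part (2) you take a slightly different route from the paper. The paper writes $\tau(E_\ell^{(i,j)})=\tau(E_\ell^{(i,j)}\circ J_{|X_i|,|X_j|})=\sqrt{|X_i||X_j|}\,\tau(E_\ell^{(i,j)}\circ E_0^{(i,j)})$ and then converts the entry sum of a Hadamard product into a trace, $\tau(A\circ B)=\mathrm{tr}(AB^\top)$, so that (B3) and the orthogonality (B4) finish the computation via $\mathrm{tr}(E_\ell^{(i,j)}E_0^{(j,i)})=\delta_{\ell,0}$. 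You instead avoid the Hadamard product and the trace altogether: you conjugate by the all-ones matrices of the two fibers, observe that $J_{|X_i|}E_\ell^{(i,j)}J_{|X_j|}=\tau(E_\ell^{(i,j)})J_{|X_i|,|X_j|}$, and evaluate the same product a second way by two applications of (B4). Both arguments are short and hinge on (B1) plus (B4); yours is marginally more self-contained in that it never invokes (B3) or the identity relating $\tau$ of an entry-wise product to a trace, while the paper's version is the one that recurs throughout Appendix A (e.g.\ in Propositions \ref{6} and \ref{5}), so it sets up machinery reused later. There is no gap in your argument; the only point needing care, which you flag yourself, is that the all-ones matrices must be indexed by the correct fibers so that (B4) applies at each multiplication.
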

\begin{proof}(1) is proved as  
\begin{align*}
\tau(A_\ell^{(i,j)})=|\{(x,y)\in X_i\times X_j \mid (x,y)\in R_\ell^{(i,j)}\}|
=|X_i|k_\ell^{(i,j)}. 
\end{align*}

(2): By $E_0^{(i,j)}=\frac{1}{\sqrt{|X_i||X_j|}}J_{|X_i|,|X_j|}$, 
\begin{align*}
\tau(E_\ell^{(i,j)})&=\tau(E_\ell^{(i,j)}\circ J_{|X_i|,|X_j|})\\ 
&=\sqrt{|X_i||X_j|}\tau(E_\ell^{(i,j)}\circ E_0^{(i,j)})\\
&=\sqrt{|X_i||X_j|}\text{tr}(E_\ell^{(i,j)}E_0^{(j,i)})\\
&=\sqrt{|X_i||X_j|}\delta_{\ell,0}\text{tr}(E_0^{(i,j)})\\
&=\sqrt{|X_i||X_j|}\delta_{\ell,0}. 
\end{align*}
This proves $(2)$.
\end{proof}

\begin{proposition}\label{3}
Let $\mathcal{C}$ be a coherent configuration such that each fiber is a symmetric association scheme and there exists a basis $\{E_\ell^{(i,j)}\mid i,j\in\Omega, 0\leq \ell\leq \tilde{r}_{i,j}\}$ of $\mathcal{A}$ satisfying (B1)-(B4).
Then the following $(1),(2)$ hold:
\begin{enumerate}
\item $p_\ell^{(i,j)}(0)=\sqrt{\frac{|X_i|}{|X_j|}}k_\ell^{(i,j)}$,
\item $q_0^{(i,j)}(m)=1$.
\end{enumerate}
\end{proposition}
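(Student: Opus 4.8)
The plan is to read off both identities by applying the entry-sum functional $\tau$ to the two change-of-basis relations that define $P^{(i,j)}$ and $Q^{(i,j)}$, and then substituting the values of $\tau$ on the two bases provided by Proposition~\ref{2}. No structural input beyond Proposition~\ref{2} and the axiom (B1) should be needed.

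For (1), I would expand $A_\ell^{(i,j)}=\sum_{n=0}^{\tilde{r}_{i,j}}p_\ell^{(i,j)}(n)E_n^{(i,j)}$ and apply $\tau$. Linearity gives $\tau(A_\ell^{(i,j)})=\sum_{n}p_\ell^{(i,j)}(n)\tau(E_n^{(i,j)})$, and Proposition~\ref{2}(2) annihilates every term with $n\neq0$, leaving $\tau(A_\ell^{(i,j)})=\sqrt{|X_i||X_j|}\,p_\ell^{(i,j)}(0)$. Since Proposition~\ref{2}(1) identifies the left-hand side as $|X_i|k_\ell^{(i,j)}$, solving for $p_\ell^{(i,j)}(0)$ and simplifying $|X_i|/\sqrt{|X_i||X_j|}=\sqrt{|X_i|/|X_j|}$ gives the assertion.

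For (2), I would instead use that the relations $R_n^{(i,j)}$ partition $X_i\times X_j$, so that $\sum_{n=\varepsilon_{i,j}}^{r_{i,j}}A_n^{(i,j)}=J_{|X_i|,|X_j|}$. Axiom (B1) then rewrites this as $E_0^{(i,j)}=\frac{1}{\sqrt{|X_i||X_j|}}\sum_{n}A_n^{(i,j)}$, which I would match against the defining expansion $E_0^{(i,j)}=\frac{1}{\sqrt{|X_i||X_j|}}\sum_{n}q_0^{(i,j)}(n)A_n^{(i,j)}$. Because the $A_n^{(i,j)}$ are linearly independent, comparing coefficients forces $q_0^{(i,j)}(m)=1$ for all $m$.

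I do not anticipate a real obstacle: the statement is in essence a normalization-bookkeeping lemma, mirroring the analogous facts for symmetric association schemes. The two things to watch are keeping the scaling constant $\sqrt{|X_i||X_j|}$ consistent (the inline ``equivalently'' display in Section~\ref{sec:cc} writes $\frac{1}{|X|}$, which should read $\frac{1}{\sqrt{|X_i||X_j|}}$ to agree with the matrix form that defines $Q^{(i,j)}$), and recording explicitly the block identity $\sum_n A_n^{(i,j)}=J_{|X_i|,|X_j|}$, which is precisely what makes the zeroth row of $Q^{(i,j)}$ constant.
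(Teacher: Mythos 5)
Your proposal is correct and follows the paper's own proof essentially verbatim: part (1) by applying $\tau$ to the expansion of $A_\ell^{(i,j)}$ in the $E$-basis and invoking Proposition~\ref{2}, and part (2) by comparing the partition identity $\sum_m A_m^{(i,j)}=J_{|X_i|,|X_j|}$ with the defining expansion of $E_0^{(i,j)}$ under (B1). Your side remark about the $\frac{1}{|X|}$ versus $\frac{1}{\sqrt{|X_i||X_j|}}$ normalization in the inline display of Section~\ref{sec:cc} is also a correct catch of a typo.
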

\begin{proof}
(1): Apply $\tau$ to $A_\ell^{(i,j)}=\sum\limits_{m=0}^{\tilde{r}_{i,j}}p_\ell^{(i,j)}(m)E_m^{(i,j)}$ and use Proposition~\ref{2}$(1),(2)$ to obtain
\begin{align*}
|X_i|k_\ell^{(i,j)}=\tau(A_\ell^{(i,j)})&=\sum\limits_{m=0}^{\tilde{r}_{i,j}}p_\ell^{(i,j)}(m)\tau(E_m^{(i,j)})=\sqrt{|X_i||X_j|}p_\ell^{(i,j)}(0). 
\end{align*}
Dividing by $\sqrt{|X_i||X_j|}$, we obtain $(1)$.

(2): By the definition of $E_0^{(i,j)}$,
\begin{align*}
E_0^{(i,j)}&=\frac{1}{\sqrt{|X_i|||X_j|}}J_{|X_i|,|X_j|}=\frac{1}{\sqrt{|X_i||X_j|}}\sum\limits_{m=\varepsilon_{i,j}}^{r_{i,j}}A_m^{(i,j)}.
\end{align*}
Hence we obtain $q_0^{(i,j)}(m)=1$.
\end{proof}

\begin{proposition}\label{4}
Let $\mathcal{C}$ be a coherent configuration such that each fiber is a symmetric association scheme and there exists a basis $\{E_\ell^{(i,j)}\mid i,j\in\Omega, 0\leq \ell\leq \tilde{r}_{i,j}\}$ of $\mathcal{A}$ satisfying (B1)-(B4).
Then the following $(1)-(3)$ hold: 
\begin{enumerate}
\item $\frac{q_h^{(i,j)}(\ell)}{\sqrt{|X_j|}m_h^{(i,j)}}=\frac{p_\ell^{(i,j)}(h)}{\sqrt{|X_i|}k_\ell^{(i,j)}}$,
\item $\sum\limits_{\nu=\varepsilon_{i,j}}^{r_{i,j}}\frac{1}{k_{\nu}^{(i,j)}}p_{\nu}^{(i,j)}(h)p_{\nu}^{(i,j)}(\ell)=\frac{|X_i|\delta_{h,\ell}}{m_\ell^{(i,j)}}$,
\item $\sum\limits_{\nu=0}^{\tilde{r}_{i,j}}m_{\nu}^{(i,j)}p_h^{(i,j)}(\nu)p_\ell^{(i,j)}(\nu)=|X_i|k_\ell^{(i,j)}\delta_{h,\ell}$.
\end{enumerate}
\end{proposition}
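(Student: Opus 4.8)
The plan is to establish relation $(1)$ first, since it is the fundamental identity linking the two eigenmatrices, and then to deduce the orthogonality relations $(2)$ and $(3)$ as formal consequences of $(1)$ together with the fact that $P^{(i,j)}$ and $Q^{(i,j)}$ are mutually inverse up to the scalar $\sqrt{|X_i||X_j|}$. This follows the classical derivation for symmetric association schemes in \cite{BI}, but with the fiber sizes $|X_i|$ and $|X_j|$ entering asymmetrically.

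For $(1)$, I would evaluate the trace $\mathrm{tr}\bigl(A_h^{(i,j)}(E_m^{(i,j)})^\top\bigr)$ in two ways. Expanding $A_h^{(i,j)}=\sum_\ell p_h^{(i,j)}(\ell)E_\ell^{(i,j)}$, using (B3) to write $(E_m^{(i,j)})^\top=E_m^{(j,i)}$, and invoking (B4) in the form $E_\ell^{(i,j)}E_m^{(j,i)}=\delta_{\ell,m}E_\ell^{(i,i)}$, one obtains
\begin{align*}
\mathrm{tr}\bigl(A_h^{(i,j)}(E_m^{(i,j)})^\top\bigr)=p_h^{(i,j)}(m)\,\mathrm{tr}(E_m^{(i,i)})=p_h^{(i,j)}(m)\,m_m^{(i,j)},
\end{align*}
where $m_m^{(i,i)}=m_m^{(i,j)}$ is the rank identity already recorded in the proof of Proposition~\ref{6}(3). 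On the other hand, since $\mathrm{tr}\bigl(A_h^{(i,j)}(E_m^{(i,j)})^\top\bigr)=\tau\bigl(A_h^{(i,j)}\circ E_m^{(i,j)}\bigr)$ and the adjacency matrices have pairwise disjoint supports, expanding $E_m^{(i,j)}=\frac{1}{\sqrt{|X_i||X_j|}}\sum_\nu q_m^{(i,j)}(\nu)A_\nu^{(i,j)}$ collapses the Hadamard product to a single term, so that by Proposition~\ref{2}(1)
\begin{align*}
\tau\bigl(A_h^{(i,j)}\circ E_m^{(i,j)}\bigr)=\frac{q_m^{(i,j)}(h)}{\sqrt{|X_i||X_j|}}\,\tau(A_h^{(i,j)})=\sqrt{\tfrac{|X_i|}{|X_j|}}\,k_h^{(i,j)}q_m^{(i,j)}(h).
\end{align*}
Equating the two expressions and rearranging gives $p_h^{(i,j)}(m)/(\sqrt{|X_i|}\,k_h^{(i,j)})=q_m^{(i,j)}(h)/(\sqrt{|X_j|}\,m_m^{(i,j)})$, which is exactly $(1)$ after the relabelling $h\mapsto\ell$, $m\mapsto h$.

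For $(2)$ and $(3)$ I would first record the inversion relations obtained by substituting each change-of-basis formula into the other and using the linear independence of $\{A_\nu^{(i,j)}\}$ (respectively $\{E_\nu^{(i,j)}\}$), namely
\begin{align*}
\sum_{\ell}p_h^{(i,j)}(\ell)q_\ell^{(i,j)}(\nu)=\sum_{\ell}q_h^{(i,j)}(\ell)p_\ell^{(i,j)}(\nu)=\sqrt{|X_i||X_j|}\,\delta_{h,\nu}.
\end{align*}
Solving $(1)$ for $q$ in terms of $p$ and substituting converts these into $(2)$ and $(3)$: writing $m_\nu^{(i,j)}p_\ell^{(i,j)}(\nu)=\sqrt{|X_i|/|X_j|}\,k_\ell^{(i,j)}q_\nu^{(i,j)}(\ell)$ turns the identity $\sum_\nu p_h^{(i,j)}(\nu)q_\nu^{(i,j)}(\ell)=\sqrt{|X_i||X_j|}\,\delta_{h,\ell}$ into $(3)$, while the dual substitution $p_\nu^{(i,j)}(h)/k_\nu^{(i,j)}=\sqrt{|X_i|}/(\sqrt{|X_j|}\,m_h^{(i,j)})\,q_h^{(i,j)}(\nu)$ yields $(2)$, the factor $m_h^{(i,j)}=m_\ell^{(i,j)}$ being legitimate on the support $h=\ell$. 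I expect the only real difficulty to be bookkeeping rather than conceptual: one must keep straight the two distinct index ranges $\{0,\dots,\tilde r_{i,j}\}$ and $\{\varepsilon_{i,j},\dots,r_{i,j}\}$ with their offset $\varepsilon_{i,j}$, track the asymmetric factors $\sqrt{|X_i|}$ versus $\sqrt{|X_j|}$ arising because the fibers may differ in size, and apply (B3)-(B4) with the transposes correctly so that every trace returns to the single fiber $X_i$.
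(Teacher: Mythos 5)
Your proof is correct and follows essentially the same route as the paper's: part (1) comes from the two evaluations of $\tau(A_h^{(i,j)}\circ E_m^{(i,j)})=\mathrm{tr}\bigl(A_h^{(i,j)}(E_m^{(i,j)})^\top\bigr)$, and parts (2), (3) follow by feeding (1) into the inversion relations $P^{(i,j)}Q^{(i,j)}=Q^{(i,j)}P^{(i,j)}=\sqrt{|X_i||X_j|}\,I$, exactly the step the paper leaves implicit. The only cosmetic difference is that you transpose $E_m^{(i,j)}$ via (B3) where the paper transposes $A_\ell^{(i,j)}$ and invokes $p_\ell^{(i,j)}(h)=p_\ell^{(j,i)}(h)$.
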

\begin{proof}
(1): Applying $\tau$ to $E_h^{(i,j)}\circ A_\ell^{(i,j)}=\frac{1}{\sqrt{|X_i||X_j|}}q_h^{(i,j)}(\ell)A_\ell^{(i,j)}$, we obtain the following: 
the left hand side yields
\begin{align*}
\tau(E_h^{(i,j)}\circ A_\ell^{(i,j)})&=\text{tr}(E_h^{(i,j)}A_\ell^{(j,i)})\\
&=\text{tr}(p_\ell^{(j,i)}(h)E_h^{(i,i)})\\
&=p_\ell^{(j,i)}(h)m_h^{(i,i)},
\end{align*}
and on the other hand, the right hand side yields 
\begin{align*}
\tau(\frac{1}{\sqrt{|X_i||X_j|}}q_h^{(i,j)}(\ell)A_\ell^{(i,j)})&=\frac{1}{\sqrt{|X_i||X_j|}}q_h^{(i,j)}(\ell)\tau(A_\ell^{(i,j)})\\
%&=\frac{1}{\sqrt{|X_i||X_j|}}q_h^{(i,j)}(\ell)|X_i|k_\ell^{(i,j)}\\
&=\sqrt{\frac{|X_i|}{|X_j|}}q_h^{(i,j)}(\ell)k_\ell^{(i,j)}.
\end{align*}
By $m_h^{(i,i)}=m_h^{(i,j)}$ and $p_\ell^{(i,j)}(h)=p_\ell^{(j,i)}(h)$, $(1)$ holds.

(2), (3): By $(1)$ and $P^{(i,j)}Q^{(i,j)}=Q^{(i,j)}P^{(i,j)}=\sqrt{|X_i||X_j|}I$, $(2)$ and $(3)$ hold.
\end{proof}

\begin{proposition}\label{5}
Let $\mathcal{C}$ be a coherent configuration such that each fiber is a symmetric association scheme and there exists a basis $\{E_\ell^{(i,j)}\mid i,j\in\Omega, 0\leq \ell\leq \tilde{r}_{i,j}\}$ of $\mathcal{A}$ satisfying (B1)-(B4).
Then the following $(1),(2)$ hold:
\begin{enumerate}
\item $q_{\ell,\ell',n}^{(i,j)}=\frac{\sqrt{|X_i||X_j|}m_\ell^{(i,j)}m_{\ell'}^{(i,j)}}{|X_i|^2}\sum\limits_{\nu=\varepsilon_{i,j}}^{r_{i,j}}\frac{1}{{k_{\nu}^{(i,j)}}^2}p_{\nu}^{(i,j)}(\ell)p_{\nu}^{(i,j)}(\ell')p_{\nu}^{(i,j)}(n)$,
\item $p_{\ell,m,n}^{(i,j,h)}=\frac{k_\ell^{(i,j)}k_m^{(j,h)}}{|X_h|}\sum\limits_{\nu=0}^{\min\{\tilde{r}_{i,j},\tilde{r}_{j,h},\tilde{r}_{h,i}\}}\frac{1}{{m_{\nu}^{(i,i)}}^2}q_{\nu}^{(i,j)}(\ell)q_{\nu}^{(j,h)}(m)q_{\nu}^{(h,i)}(n)$.
\end{enumerate}
\end{proposition}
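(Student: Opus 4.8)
The plan is to prove both identities by the two-move strategy familiar from the association scheme case in \cite{BI}: first expand the defining product using the appropriate ``dual'' multiplication (the entrywise product $\circ$ for the Krein numbers, the ordinary product for the intersection numbers), and then rewrite every eigenmatrix entry within a single family by means of the transition relation Proposition~\ref{4}(1). Throughout I would record two auxiliary facts that make the bookkeeping work: the fiber-independence of multiplicities $m_\nu^{(i,j)}=m_\nu^{(i,i)}=m_\nu^{(j,j)}$, and the transpose identity $q_\nu^{(i,h)}(n)=q_\nu^{(h,i)}(n)$.

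For $(1)$ I would start from the definition $E_\ell^{(i,j)}\circ E_{\ell'}^{(i,j)}=\frac{1}{\sqrt{|X_i||X_j|}}\sum_n q_{\ell,\ell',n}^{(i,j)}E_n^{(i,j)}$ and substitute $E_\ell^{(i,j)}=\frac{1}{\sqrt{|X_i||X_j|}}\sum_\nu q_\ell^{(i,j)}(\nu)A_\nu^{(i,j)}$. Since the $A_\nu^{(i,j)}$ are $(0,1)$-matrices with pairwise disjoint supports, $A_\nu^{(i,j)}\circ A_{\nu'}^{(i,j)}=\delta_{\nu,\nu'}A_\nu^{(i,j)}$, so the entrywise product collapses to $\frac{1}{|X_i||X_j|}\sum_\nu q_\ell^{(i,j)}(\nu)q_{\ell'}^{(i,j)}(\nu)A_\nu^{(i,j)}$. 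Re-expanding $A_\nu^{(i,j)}=\sum_n p_\nu^{(i,j)}(n)E_n^{(i,j)}$ and comparing coefficients of $E_n^{(i,j)}$ yields $q_{\ell,\ell',n}^{(i,j)}=\frac{1}{\sqrt{|X_i||X_j|}}\sum_\nu q_\ell^{(i,j)}(\nu)q_{\ell'}^{(i,j)}(\nu)p_\nu^{(i,j)}(n)$. Finally Proposition~\ref{4}(1) gives $q_\ell^{(i,j)}(\nu)=\frac{\sqrt{|X_j|}\,m_\ell^{(i,j)}}{\sqrt{|X_i|}\,k_\nu^{(i,j)}}p_\nu^{(i,j)}(\ell)$ and the analogue for $\ell'$; after substitution the scalar prefactors combine to $\frac{\sqrt{|X_i||X_j|}\,m_\ell^{(i,j)}m_{\ell'}^{(i,j)}}{|X_i|^2}$ and each conversion contributes one factor $1/k_\nu^{(i,j)}$, producing the stated $1/(k_\nu^{(i,j)})^2$.

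For $(2)$ I would instead expand $A_\ell^{(i,j)}A_m^{(j,h)}$ via $A_\ell^{(i,j)}=\sum_\nu p_\ell^{(i,j)}(\nu)E_\nu^{(i,j)}$ together with the analogous expansion of $A_m^{(j,h)}$, and then invoke the multiplicative orthogonality (B4), $E_\nu^{(i,j)}E_{\nu'}^{(j,h)}=\delta_{\nu,\nu'}E_\nu^{(i,h)}$. This is the decisive step: it forces $\nu=\nu'$ and, because $E_\nu^{(i,h)}$ exists only for $\nu\le\tilde{r}_{i,h}=\tilde{r}_{h,i}$, it simultaneously caps the surviving sum at $\min\{\tilde{r}_{i,j},\tilde{r}_{j,h},\tilde{r}_{h,i}\}$, which is exactly the range appearing in the statement. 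Expanding $E_\nu^{(i,h)}=\frac{1}{\sqrt{|X_i||X_h|}}\sum_n q_\nu^{(i,h)}(n)A_n^{(i,h)}$ and comparing coefficients of $A_n^{(i,h)}$ gives $p_{\ell,m,n}^{(i,j,h)}=\frac{1}{\sqrt{|X_i||X_h|}}\sum_\nu p_\ell^{(i,j)}(\nu)p_m^{(j,h)}(\nu)q_\nu^{(i,h)}(n)$. A further application of Proposition~\ref{4}(1), now in the form $p_\ell^{(i,j)}(\nu)=\frac{\sqrt{|X_i|}\,k_\ell^{(i,j)}}{\sqrt{|X_j|}\,m_\nu^{(i,j)}}q_\nu^{(i,j)}(\ell)$, converts the two $p$-factors into $q$-factors; the prefactors telescope to $\frac{k_\ell^{(i,j)}k_m^{(j,h)}}{|X_h|}$.

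The main obstacle will be reconciling the multiplicity factors rather than any genuine difficulty. The substitution in $(2)$ naturally produces $m_\nu^{(i,j)}m_\nu^{(j,h)}$ in the denominator, whereas the target reads $(m_\nu^{(i,i)})^2$; likewise in $(1)$ one must identify $m_\ell^{(i,j)}$ with an intrinsic multiplicity. I would dispose of this by establishing $m_\nu^{(i,j)}=m_\nu^{(i,i)}=m_\nu^{(j,j)}$ for all fibers, which follows from the rank computation already used in the proof of Proposition~\ref{6}(3): from $E_\nu^{(i,i)}=E_\nu^{(i,j)}E_\nu^{(j,i)}$ and $(E_\nu^{(i,j)})^\top=E_\nu^{(j,i)}$ one gets $\mathrm{rank}\,E_\nu^{(i,i)}=\mathrm{rank}\,E_\nu^{(i,j)}=\mathrm{rank}\,E_\nu^{(j,i)}=\mathrm{rank}\,E_\nu^{(j,j)}$. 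I would also record the transpose identity $q_\nu^{(i,h)}(n)=q_\nu^{(h,i)}(n)$, obtained by transposing the defining relation for $E_\nu^{(i,h)}$ and using (B3) $(E_\nu^{(i,h)})^\top=E_\nu^{(h,i)}$ and $(A_n^{(i,h)})^\top=A_n^{(h,i)}$, so that the third factor appears as $q_\nu^{(h,i)}(n)$ precisely as written. With these two observations in hand, what remains is routine tracking of the square roots of the fiber sizes.
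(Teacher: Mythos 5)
Your proof is correct and follows essentially the same route as the paper's: both arguments expand everything in the appropriate basis, exploit the disjoint supports of the $A_\nu^{(i,j)}$ under $\circ$ for part (1) and the orthogonality (B4) for part (2), and then convert between $p_\nu^{(i,j)}(\ell)$ and $q_\nu^{(i,j)}(\ell)$ via Proposition~\ref{4}(1) together with the symmetries $m_\nu^{(i,j)}=m_\nu^{(i,i)}$ and $q_\nu^{(i,h)}(n)=q_\nu^{(h,i)}(n)$. The only difference is cosmetic: you isolate the target coefficient by comparing basis expansions, whereas the paper does so by applying $\mathrm{tr}$ and $\tau$ (tracing against $E_n^{(j,i)}$ in (1), and entrywise-multiplying by $A_n^{(i,h)}$ and summing entries in (2)); the underlying computation is identical.
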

\begin{proof}
(1): Applying \text{tr} to $q_{\ell,\ell',n}^{(i,j)}E_n^{(i,i)}=\sqrt{|X_i|||X_j|}(E_\ell^{(i,j)}\circ E_{\ell'}^{(i,j)})E_n^{(j,i)}$,
\begin{align*}
q_{\ell,\ell',n}^{(i,j)}m_n^{(i,i)}&=\sqrt{|X_i|||X_j|}\text{tr}((E_\ell^{(i,j)}\circ E_{\ell'}^{(i,j)})E_n^{(j,i)})\\
&=\sqrt{|X_i|||X_j|}\tau(E_\ell^{(i,j)}\circ E_{\ell'}^{(i,j)}\circ  E_n^{(i,j)})\\
&=\frac{1}{|X_i||X_j|}\sum\limits_{\nu=\varepsilon_{i,j}}^{r_{i,j}}q_\ell^{(i,j)}(\nu)q_{\ell'}^{(i,j)}(\nu)q_n^{(i,j)}(\nu)\tau(A_{\nu}^{(i,j)})\\
&=\frac{1}{|X_i||X_j|}\sum\limits_{\nu=\varepsilon_{i,j}}^{r_{i,j}}\left(\sqrt{\frac{|X_j|}{|X_i|}}\right)^3\frac{m_\ell^{(i,j)}m_{\ell'}^{(i,j)}m_n^{(i,j)}}{{k_{\nu}^{(i,j)}}^3}p_\ell^{(i,j)}(\nu)p_{\ell'}^{(i,j)}(\nu)p_n^{(i,j)}(\nu)|X_i|k_{\nu}^{(i,j)}\\
&=\frac{\sqrt{|X_i||X_j|}m_\ell^{(i,j)}m_{\ell'}^{(i,j)}m_n^{(i,j)}}{|X_i|^2}\sum\limits_{\nu=\varepsilon_{i,j}}^{r_{i,j}}\frac{1}{{k_{\nu}^{(i,j)}}^2}q_\ell^{(i,j)}(\nu)q_{\ell'}^{(i,j)}(\nu)q_n^{(i,j)}(\nu).
\end{align*}
Dividing by $m_n^{(i,i)}=m_n^{(i,j)}$, we obtain $(1)$.

(2): Applying $\tau$ to $p_{\ell,m,n}^{(i,j,h)}A_n^{(i,h)}=(A_\ell^{(i,j)}A_m^{(j,h)})\circ A_n^{(i,h)}$,
\begin{align*}
p_{\ell,m,n}^{(i,j,h)}|X_i|k_n^{(i,h)}&=\tau((A_\ell^{(i,j)}A_m^{(j,h)})\circ A_n^{(i,h)})\\
&=\text{tr}(A_\ell^{(i,j)}A_m^{(j,h)}A_n^{(h,i)})\\
&=\sum\limits_{\nu=0}^{\min\{\tilde{r}_{i,j},\tilde{r}_{j,h},\tilde{r}_{h,i}\}}p_\ell^{(i,j)}(\nu)p_m^{(j,h)}(\nu)p_n^{(h,i)}(\nu)\text{tr}(E_{\nu}^{(i,i)})\\
&=\sum\limits_{\nu=0}^{\min\{\tilde{r}_{i,j},\tilde{r}_{j,h},\tilde{r}_{h,i}\}}\frac{k_\ell^{(i,j)}k_m^{(j,h)}k_n^{(h,i)}}{m_{\nu}^{(i,j)}m_{\nu}^{(j,h)}m_{\nu}^{(h,i)}}q_{\nu}^{(i,j)}(\ell)q_{\nu}^{(j,h)}(m)q_{\nu}^{(h,i)}(n)m_{\nu}^{(i,i)}\\
&=k_\ell^{(i,j)}k_m^{(j,h)}k_n^{(h,i)}\sum\limits_{\nu=0}^{\min\{\tilde{r}_{i,j},\tilde{r}_{j,h},\tilde{r}_{h,i}\}}\frac{1}{{m_{\nu}^{(i,i)}}^2}q_{\nu}^{(i,j)}(\ell)q_{\nu}^{(j,h)}(m)q_{\nu}^{(h,i)}(n). 
\end{align*}
Dividing by $|X_i|k_n^{(i,h)}=|X_k|k_n^{(h,i)}$, we obtain $(2)$. 
\end{proof}

\end{document}